\theoremstyle{plain}
  \newtheorem{theorem}{Theorem}[section]
  \newtheorem{corollary}[theorem]{Corollary}
  \newtheorem{lemma}[theorem]{Lemma}
  \newtheorem{proposition}[theorem]{Proposition}
\theoremstyle{definition}
  \newtheorem{definition}[theorem]{Definition}
  \newtheorem{ex}[theorem]{Example}
  \newtheorem{remark}[theorem]{Remark}
  \newtheorem{question}[theorem]{Question}
  \newenvironment{example}{\begin{ex}}{\qed\end{ex}}
\newcommand{\category}[1]{\mathbf{#1}}
  \newcommand{\Top}{\category{Top}}
\newcommand{\coequalizer}[5]{\xymatrix{
 {\displaystyle #1} 
 \ar@<1ex>[r]^-{#2} \ar@<-1ex>[r]_-{#3}
 & {\displaystyle #4} \ar[r] & {\displaystyle #5}
 }%
}
\newcommand{\equalizer}[5]{
\xymatrix{
 {\displaystyle #1} \ar[r] & {\displaystyle #2}
 \ar@<1ex>[r]^-{#3} \ar@<-1ex>[r]_-{#4}
 & {\displaystyle #5} 
 }%
}
  \newcommand{\Ima}{\operatorname{Im}}
\def\polhk#1{\setbox0=\hbox{#1}{\ooalign{\hidewidth
    \lower1.5ex\hbox{`}\hidewidth\crcr\unhbox0}}} 
  \newcommand{\quotient}[2]{%
  \left(#1\right)
  \hspace{-4pt}\raisebox{-5pt}{$\bigg/$}\hspace{-2pt}\raisebox{-12pt}{$#2$}%
  }
\newcommand{\set}[2]{%
\left\{#1 \,\middle|\, #2\right\}
}
  \newcommand{\rarrow}[1]{\buildrel #1 \over \longrightarrow}
  \newcommand{\pr}{\operatorname{\mathrm{pr}}}
  \newcommand{\transpose}[1]{\raisebox{1ex}{$\scriptstyle t$}\kern-0.2ex #1}
  \newcommand{\R}{\mathbb{R}}
  \newcommand{\op}{{\operatorname{\mathrm{op}}}} 
  \newcommand{\Lk}{\mathrm{Lk}} 
  \newcommand{\inj}{\operatorname{\mathrm{inj}}}
  \newcommand{\Exit}{\mathsf{Exit}} 
  \newcommand{\cpt}{\mathrm{cpt}} 
  \newcommand{\Haus}{\mathrm{Haus}} 
 \newcommand{\Sing}{\mathrm{Sing}}
  \newcommand{\String}{\mathrm{\String}}
  \newcommand{\St}{\mathrm{St}}
\newcommand{\fixhyperref}{%
\ifnum 42146=\euc"A4A2 \AtBeginDvi{}\else
\AtBeginDvi{}\fi}
\newcommand{\DDelta}{\underline{\Delta}}
\newcommand{\nN}{\overline{N}}
\DeclareMathOperator{\inte}{Int}
\newcommand{\cone}{\mathsf{cone}}
\newcommand{\SSS}{\category{SSS}}
\newcommand{\CNSSS}{\category{CNSSS}}
\title{\bfseries Stellar Stratifications on Classifying Spaces}
\author{Dai Tamaki and Hiro Lee Tanaka}
\begin{document}

\maketitle

\begin{abstract}
 We extend Bj{\"o}rner's characterization of the face poset of finite CW
 complexes to a certain class of stratified spaces, called cylindrically
 normal stellar complexes. As a direct consequence, we obtain a
 discrete analogue of cell decompositions in smooth Morse theory, by
 using the classifying space model introduced in \cite{1612.08429}.
 As another application, we show that the exit-path category $\Exit(X)$,
 in the sense of \cite{LurieHigherAlgebra},
 of a finite cylindrically normal CW stellar complex $X$ is a
 quasicategory. 
\end{abstract}

\tableofcontents

\section{Introduction}
\label{SSS_intro}

In this paper, we study stratifications on classifying spaces of acyclic
topological categories. In particular, the following three questions are
addressed. 

\begin{question}
 \label{question:recover}
 How can we recover the original category $C$ from its classifying space
 $BC$? 
\end{question}

\begin{question}
 \label{question:exit-path}
 For a stratified space $X$ with a structure analogous to a cell
 complex, the first author
 \cite{1609.04500} defined an acyclic 
 topological category $C(X)$, called the \emph{face category} of $X$,
 whose classifying space is homotopy equivalent to $X$.
 On the other hand, there is a way to associate an $\infty$-category
 $\Exit(X)$, called the \emph{exit-path category}\footnote{A precise
 definition is given in \S\ref{exit-path}.} of $X$, to a
 stratified space satisfying certain conditions
 \cite{LurieHigherAlgebra}. Is $C(X)$ equivalent to $\Exit(X)$ as
 $\infty$-categories? 
\end{question}

\begin{question}
 \label{question:discrete_Morse}
 For a discrete Morse function $f$ or an acyclic partial matching
 on a regular CW complex $X$, Vidit 
 Nanda, Kohei Tanaka, and the first author \cite{1612.08429} constructed
 a poset-enriched category $C(f)$ whose classifying space\footnote{See
 \S\ref{discrete_Morse_theory} for the choice of classifying
 space of $2$-categories used here.} is homotopy equivalent to
 $X$. Does this classifying space have a ``cell decomposition'' analogous
 to smooth Morse theory? 
\end{question}

The original motivation for this work was Question
\ref{question:exit-path} posed by 
the second author during a series of talks by the first author at the
IBS Center for Geometry of Physics in Pohang. For any stratified space
$X$, $\Exit(X)$ can be defined as a simplicial set.
Before Question \ref{question:exit-path}, the first question
we need to address is if $\Exit(X)$ is a quasi-category.
Lurie proved as Theorem A.6.4 (1) in \cite{LurieHigherAlgebra} that
$\Exit(X)$ is a quasi-category if $X$ is conically
stratified\footnote{Definition \ref{definition:conically_stratified}.}.

\begin{question}
 \label{question:conically_stratified}
 When is a CW complex $X$ conically stratified?
\end{question}

It turns out that Question \ref{question:recover} is closely related to
this problem.
The stratified spaces in Question \ref{question:exit-path} are called
\emph{cylindrically normal stellar stratified spaces}\footnote{Precisely
speaking, CNSSS in this paper is slightly different from the one in
\cite{1609.04500}. See \S\ref{SSS_basics} for a precise definition.},
CNSSS for short, and an answer to Question \ref{question:recover} can be
given by using CNSSS.

\begin{theorem}
 \label{theorem:stratification}
 Let $C$ be an acyclic topological category with the space of objects
 $C_{0}$ having discrete topology. Suppose further that the space of
 morphisms $C(x,y)$ is compact Hausdorff for each pair $x,y\in C_{0}$
 and the set
 $P(C)_{<x}=\set{y\in C_{0}}{C(y,x)\neq\emptyset \text{ and } y\neq x}$
 is finite for each $x\in C_{0}$. 
 Then there exists a structure of CNSSS on the classifying space $BC$
 whose face category is isomorphic to $C$ as topological categories. 
\end{theorem}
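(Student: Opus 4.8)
The plan is to read off the stratification from the cone/bar decomposition that the classifying space of an acyclic category carries, and then to identify the face category tautologically. Throughout I work by induction over the poset $P(C)$, whose relation $y\le x\iff C(y,x)\neq\emptyset$ is a genuine partial order precisely because $C$ is acyclic. \textbf{Stratification.} The finiteness of $P(C)_{<x}$ makes each down-set $P(C)_{\le x}$ finite, so the full subcategory $C_{\le x}$ on the objects $\le x$ has a compact classifying space $BC_{\le x}$: it is a finite union of pieces $\Delta^{k}\times C(x_0,x_1)\times\cdots\times C(x_{k-1},x_k)$, each a finite product of compact Hausdorff spaces. I would stratify $BC$ by the map $\pi\colon BC\to P(C)$ sending a point represented by $(t;\,x_0\to\cdots\to x_n)$ to the object $x_i$ with $i$ maximal subject to $t_i>0$. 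After checking that $\pi$ is well defined on the realization (compatibility with the face and degeneracy identifications) and continuous, the strata are $e_x=\pi^{-1}(x)$, and a short argument with the last barycentric coordinate gives $\overline{e_x}=BC_{\le x}=\bigsqcup_{y\le x}e_y$, so the frontier condition holds.

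\textbf{Stellar cells.} For each $x$ I would exhibit $e_x$ as the interior of a stellar cell with apex the vertex of $BC$ at $x$. Concretely, set
$$S_x \;=\; \Big(\coprod_{y<x} C(y,x)\times BC_{\le y}\Big)\Big/\!\sim,$$
where $\sim$ is generated by composition: if a point of $BC_{\le y}$ lies in the image of $C(z,y)\times BC_{\le z}$ via $g\in C(z,y)$, then $(f,\text{that point})$ with $f\in C(y,x)$ is identified with $(f\circ g,\cdot)\in C(z,x)\times BC_{\le z}$. Finiteness of $P(C)_{<x}$ and compactness of each $C(y,x)$ make $S_x$ compact, and I would produce a characteristic map
$$\Phi_x\colon \Cone(S_x)\longrightarrow BC_{\le x}$$
carrying the apex to the vertex $x$, restricting to a homeomorphism of the open cone onto $e_x$, and sending the base $S_x$ onto $BC_{<x}$ by the evident attaching map. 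Already the two-object category with $C(0,1)=K$ shows that $\Phi_x$ must collapse the base (there $\overline{e_1}$ is the suspension of $K$, not a cone), so one expects only the open cone to map homeomorphically. Assembling the $\Phi_x$ and passing to the colimit over the finite down-sets equips $BC$ with a stellar stratified structure.

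\textbf{Cylindrical normality and the face category.} The coproduct defining $S_x$ displays, for each $y<x$, the piece $C(y,x)\times BC_{\le y}$, whose interior is a cylinder $C(y,x)\times e_y$ lying in $\partial\,\overline{e_x}$ over the stratum $e_y$; this is exactly the product structure demanded by a cylindrical normal structure, with $C(y,x)$ playing the role of cylinder-parameter space. Coherence of these cylinders along a chain $z<y<x$ is literally the associativity of composition $C(y,x)\times C(z,y)\to C(z,x)$ already used in the relation $\sim$, and unitality is clear, so the data assemble into a cylindrical normal structure. The face category then reads off tautologically: its objects are the cells $e_x$, i.e.\ the objects of $C$; its morphism space from $e_y$ to $e_x$ is the cylinder-parameter space, namely $C(y,x)$; and composition of cylinders is composition in $C$. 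Hence the face category is isomorphic to $C$ as topological categories.

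The step I expect to be the main obstacle is the middle one: verifying that $\Cone(S_x)$ genuinely satisfies the axioms of a stellar cell — that the compact space $S_x$ admits the required presentation as a star-shaped domain, and that $\Phi_x$ is an honest characteristic map (a homeomorphism on the open cone, compatibly across all $x$). Controlling this uniformly over the poset, while keeping the cylinder structures strictly compatible, is exactly where the hypotheses are used: the compact Hausdorff condition on the morphism spaces guarantees that the cells and cylinders are closed and well-behaved, and the finiteness of $P(C)_{<x}$ keeps every $S_x$ compact and finite-dimensional, so that $\Cone(S_x)$ is a bona fide stellar cell rather than an infinite-dimensional cone.
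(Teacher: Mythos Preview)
Your overall strategy matches the paper's: the paper also stratifies $BC$ by the target (calling it the unstable stratification), realizes each cell as a cone, and reads the cylindrical structure off composition in $C$. But there is a genuine gap in your construction of $S_x$: you build it from pieces $C(y,x)\times BC_{\le y}$ where $BC_{\le y}=\overline{e_y}$ is the \emph{image} of the lower cell in $BC$, when what is needed is the \emph{domain} of its characteristic map. The map ``$C(z,y)\times BC_{\le z}\to BC_{\le y}$ via $g$'' that your relation invokes does not actually depend on $g$---it is just the inclusion of one subspace of $BC$ into another---so the datum $g$ is lost and $f\circ g$ cannot be recovered from $(f,p)$. Concretely, take $z<y<x$ with $C(z,y)=\{g_1,g_2\}$, $C(y,x)=\{f\}$, and $fg_1\neq fg_2$. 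Then $BC_{\le y}$ is a circle (two arcs from $z$ to $y$), so your $S_x$ collapses to $S^1$ and its cone is a disk; but $e_x$ is the open cone on two intervals joined at a point, i.e.\ a half-open triangle, not an open disk. No map from a disk to $\overline{e_x}$ can restrict to a homeomorphism of interiors.

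The paper sidesteps this by working with comma categories rather than subspaces of $BC$. Set $D_x=B(C\downarrow x)$ and $\partial D_x=B(C_{<x})$, where $C_{<x}$ is $C\downarrow x$ with its terminal object $1_x$ deleted. Because $1_x$ is terminal, $D_x$ is automatically the cone on $\partial D_x$; the characteristic map $s_x\colon D_x\to\overline{e_x}\subset BC$ is induced by the source functor $C\downarrow x\to C$; and the cylinder $b_{y,x}\colon C(y,x)\times D_y\to D_x$ is induced by postcomposition $u\circ(-)\colon C\downarrow y\to C\downarrow x$ for each $u\in C(y,x)$. In the example above $D_y$ is the cone on two points (two intervals meeting at $1_y$), not a circle, and $\partial D_x$ comes out right. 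No induction over the poset is needed; the hypotheses enter exactly where you guessed, to make $D_x$ compact and hence $s_x$ a quotient map onto $\overline{e_x}$.
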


Roughly speaking, CNSSS is a generalization of CW complex with
cells replaced by ``star-shaped cells''.
For a regular CW complex $X$, the stratification on the classifying
space of its face poset $F(X)$ obtained by Theorem \ref{theorem:stratification}
agrees with the original cell decomposition on $X$ under the standard
homeomorphism $X\cong BF(X)$. However, the use of
``star-shaped cells'' is essential for acyclic categories in general.
For example, consider the acyclic category $C$ depicted in Figure
\ref{figure:typical_acyclic_category}. 

 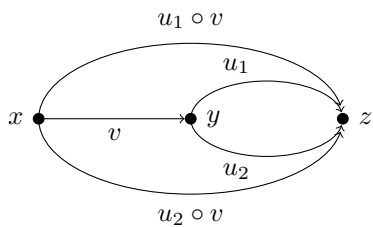
\begin{figure}[ht]
   \begin{center}
  \begin{tikzpicture}
   \draw [fill] (0,0) circle (2pt);
   \draw (-0.3,0) node {$x$};
   \draw [fill] (2,0) circle (2pt);
   \draw (2.3,0) node {$y$};
   \draw [fill] (4,0) circle (2pt);
   \draw (4.3,0) node {$z$};

   \draw [->] (0,0) -- (1.92,0);
   \draw (1,-0.2) node {$v$};

   \draw [->]  (2,0) arc [start angle=180, end angle=10,
   x radius=1, y radius=0.5];
   \draw (2.6,0.7) node {$u_1$};
   \draw [->]  (2,0) arc [start angle=180, end angle=350,
   x radius=1, y radius=0.5];
   \draw (2.6,-0.7) node {$u_2$};

   \draw [->]  (0,0) arc [start angle=180, end angle=10,
   x radius=2, y radius=1];
   \draw (2,1.3) node {$u_{1}\circ v$};
   \draw [->]  (0,0) arc [start angle=180, end angle=350,
   x radius=2, y radius=1];
   \draw (2,-1.3) node {$u_{2}\circ v$};
  \end{tikzpicture}
 \end{center}
  \caption{An acyclic category $C$}
  \label{figure:typical_acyclic_category}
 \end{figure}

 Its classifying space $BC$ and the stratification of $BC$ obtained
 by Theorem \ref{theorem:stratification} is shown in Figure
 \ref{figure:BC}. The middle stratum in the right-hand side of the
 equality is the $1$-cell $[x,y]$ with $x$ removed. Similarly in the
 right stratum, top and bottom edges of the ``hourglass'' are removed.
 The dotted arrows indicate inclusions of strata into 
 boundaries of closures of higher strata. By regarding the dotted arrows as
 morphisms and strata as objects, we recover the original category
 $C$. 

 \begin{figure}[ht]
    \begin{center}
  \begin{tikzpicture}
   \draw [fill,lightgray] (2,0) ellipse [x radius=2, y radius=1];
   \draw [fill,white] (3,0) ellipse [x radius=1, y radius=0.5];
   
   \draw [fill] (0,0) circle (2pt);
   \draw (-0.3,0) node {$x$};
   \draw [fill] (2,0) circle (2pt);
   \draw (2.3,0) node {$y$};
   \draw [fill] (4,0) circle (2pt);
   \draw (4.1,-0.4) node {$z$};

   \draw (0,0) -- (2,0);

   \draw (2,0) arc [start angle=180, end angle=0,
   x radius=1, y radius=0.5];
   \draw (2,0) arc [start angle=180, end angle=360,
   x radius=1, y radius=0.5];

   \draw (0,0) arc [start angle=180, end angle=4,
   x radius=2, y radius=1];
   \draw (0,0) arc [start angle=180, end angle=356,
   x radius=2, y radius=1];

   \draw (4.5,0) node {$=$};
   
   \draw [fill] (5,0) circle (2pt);
   \draw (5,-0.3) node {$x$};   

   \draw (5.5,0) node {$\cup$};
   
   \draw (6.02,0) -- (6.5,0);
   \draw (6.02,-0.3) node {$x$};   
   \draw [fill] (6,0) circle (2pt);
   \draw [fill,white] (6,0) circle (1pt);
   \draw [fill] (6.5,0) circle (2pt);
   \draw (6.5,-0.3) node {$y$};
   
   \draw (7,0) node {$\cup$};

   \draw [fill,lightgray] (7.5,1) -- (8,0) -- (8.5,1) --
   (7.5,1); 
   \draw (7.52,0.98) -- (8,0) -- (8.48,0.98);
   \draw [fill] (7.5,1) circle (2pt);
   \draw (7.5,1.3) node {$x$};   
   \draw [fill,white] (7.5,1) circle (1pt);
   \draw [fill] (8.5,1) circle (2pt);
   \draw (8.5,1.3) node {$y$};   
   \draw [fill,white] (8.5,1) circle (1pt);
   \draw [fill,lightgray] (7.5,-1) -- (8,0) -- (8.5,-1) --
   (7.5,-1); 
   \draw (7.52,-0.98) -- (8,0) -- (8.48,-0.98);
   \draw [fill] (7.5,-1) circle (2pt);
   \draw [fill,white] (7.5,-1) circle (1pt);
   \draw (7.5,-1.3) node {$x$};   
   \draw [fill] (8.5,-1) circle (2pt);
   \draw [fill,white] (8.5,-1) circle (1pt);
   \draw (8.5,-1.3) node {$y$};   
   \draw [fill] (8,0) circle (2pt);
   \draw (8.3,0) node {$z$};

   \draw [dotted,->] (5.2,0) -- (5.8,0);
   
   \draw [dotted,->] (6.25,0.1) -- (7.8,0.9);
   \draw [dotted,->] (6.25,-0.1) -- (7.8,-0.9);   
  \end{tikzpicture}
 \end{center}
  \caption{The classifying space $BC$ and its unstable stratification}
  \label{figure:BC}
 \end{figure}
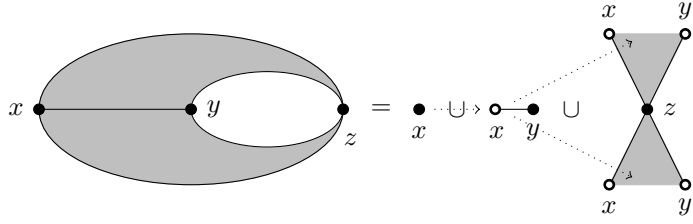

The stratification in Theorem \ref{theorem:stratification} is called the
\emph{unstable stratification} on 
$BC$. There is a dual stratification called the \emph{stable
stratification}. By combining these two stratifications, we obtain the
following result.

\begin{theorem}
 \label{theorem:conically_stratified}
 Let $C$ be an acyclic topological category satisfying the conditions of 
 Theorem \ref{theorem:stratification}. Suppose further that
 $P(C)_{>x}=\set{y\in C_{0}}{C(x,y)\neq\emptyset \text{ and }x\neq y}$
 is finite.
 Then the unstable stratification
 on $BC$ is conically stratified. Hence $\Exit(BC)$ is a quasi-category.
\end{theorem}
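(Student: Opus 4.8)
The plan is to deduce the statement from Lurie's criterion (Theorem A.6.4 (1) of \cite{LurieHigherAlgebra}), which guarantees that $\Exit(X)$ is a quasi-category as soon as $X$ is conically stratified in the sense of Definition \ref{definition:conically_stratified}. Thus the entire content is to verify that the unstable stratification on $BC$ is conically stratified, i.e.\ that every point admits a chart of the form $Z\times\cone(L)$ with $L$ a stratified space over the poset of strata strictly above the given one; the second, formal, half of the argument is then the single invocation of Lurie's theorem.

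I would first recall, from the construction underlying Theorem \ref{theorem:stratification}, the explicit description of the unstable strata: the stratum $X_{x}$ attached to an object $x\in C_{0}$ consists of those points of $BC$ whose representing nondegenerate simplex $x_{0}\to\cdots\to x_{n-1}\to x$ has terminal vertex $x$ and whose barycentric coordinate at $x$ is positive. Fixing $\xi\in X_{x}$, the simplices of $BC$ incident to $\xi$ are indexed by pairs consisting of a chain into $x$ (recording morphisms terminating at $x$) and a chain out of $x$, i.e.\ $x\to z_{1}\to\cdots\to z_{m}$ with $x<z_{1}<\cdots<z_{m}$. The join decomposition $\Delta^{n}\ast\Delta^{m-1}$ of each such simplex separates the ``downward'' directions, which stay inside $X_{x}$, from the ``upward'' directions, which move into the strata $X_{z}$ with $z\in P(C)_{>x}$.

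Using this, I would build the conical chart at $\xi$ as a product of a downward factor $Z$ and an upward cone $\cone(L)$. The factor $Z$ is the local model of $X_{x}$ near $\xi$ coming from the stable stratification (the cell through $\xi$ cut out by the chains into $x$); it lies entirely in $X_{x}$, and its finiteness is controlled by the hypothesis that $P(C)_{<x}$ is finite. The link $L$ is assembled from the upward data: it is the stratified space over $P(C)_{>x}$ obtained by realizing the chains $x\to z_{1}\to\cdots\to z_{m}$ decorated by the morphism spaces $C(x,z_{1}),C(z_{1},z_{2}),\dots$, i.e.\ the ``upper link'' of $x$ in $BC$. Here the two extra hypotheses do the work: because $P(C)_{>x}$ is finite and every morphism space is compact Hausdorff, $L$ is a \emph{compact} stratified space built from finitely many compact cells, so that the open cone $\cone(L)$ is a genuine local model and its cone point lies in $X_{x}$. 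The cylindrical normality of the CNSSS structure from Theorem \ref{theorem:stratification}, which provides compatible cylinder (collar) coordinates along each attaching map, then yields a stratum-preserving open embedding $Z\times\cone(L)\hookrightarrow BC$ whose image is a neighborhood of $\xi$ and which sends $Z\times\{\text{cone point}\}$ into $X_{x}$.

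The main obstacle is precisely this last step: producing the decomposition $Z\times\cone(L)$ \emph{compatibly across all simplices incident to $\xi$}, and checking that the upward data really assemble into a single compact stratified space $L$ over $P(C)_{>x}$ with a well-defined cone coordinate. This is where cylindrical normality is essential, since it is exactly the hypothesis that the various join/cone coordinates coming from different incident simplices agree on overlaps, so that the local cones glue to one cone $\cone(L)$ rather than merely to a separate cone on each cell; and it is where finiteness of $P(C)_{>x}$ is essential, both to make $L$ compact and to guarantee that only finitely many strata meet a neighborhood of $\xi$, giving local finiteness of the chart. Once the chart $Z\times\cone(L)$ is established at every point, the unstable stratification is conically stratified by definition, and Lurie's Theorem A.6.4 (1) immediately gives that $\Exit(BC)$ is a quasi-category.
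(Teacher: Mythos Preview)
Your proposal is correct and follows essentially the same route as the paper: the conical chart at any point of the $x$-stratum is $Z\times\cone^{\circ}(L)$ with $Z=D_{x}^{\circ}$ the open unstable cell and $L=\partial D_{x}^{\op}=BC_{>x}$ the upper link, and the chart is built from the join decomposition $\DDelta^{p}\star\DDelta^{q}$ of each simplex passing through $x$.

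Two small points of comparison. First, the paper does not invoke the cylindrical normality of the CNSSS structure as the gluing mechanism; instead it writes down a single explicit map $n_{x}:D_{x}\star\partial D_{x}^{\op}\to BC$ that concatenates a chain into $x$ with a chain out of $x$ by composing the two morphisms meeting at $x$, and then proves $n_{x}$ is an embedding by the compact-to-Hausdorff trick (both finiteness hypotheses plus compact Hausdorff hom-spaces make the source compact, and $BC$ is Hausdorff by de~Seguins~Pazzis). This is exactly what your ``compatibility across incident simplices'' amounts to, but it is cleaner than appealing to cylindrical normality as an abstract structure. Second, a terminology slip: your $Z$ is the \emph{unstable} $x$-cell (chains \emph{into} $x$), not the stable one; in the paper's conventions the stable stratification records sources, the unstable one records targets.
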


It is shown in \cite{1609.04500} that, when a CNSSS $X$ is a CW
complex, $BC(X)$ is homeomorphic to $X$, and
we obtain an answer to Question \ref{question:conically_stratified}. 

\begin{corollary}
 \label{corollary:exit-path_CNCW}
 If a finite CW complex $X$ has a structure of CNSSS, then $X$ is
 conically stratified, hence $\Exit(X)$ is a 
 quasi-category. 
\end{corollary}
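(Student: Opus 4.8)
The plan is to reduce the corollary to Theorem \ref{theorem:conically_stratified} by way of the homeomorphism $BC(X)\cong X$ established in \cite{1609.04500}. First I would set $C=C(X)$, the face category of the given CNSSS structure on $X$, and check that $C$ satisfies all the hypotheses of Theorem \ref{theorem:conically_stratified}. Because $X$ is a \emph{finite} CW complex, its set of strata is finite, so the object set $C_{0}$ is finite---hence discrete---and both $P(C)_{<x}$ and $P(C)_{>x}$ are automatically finite for every $x\in C_{0}$. The one remaining condition, that each morphism space $C(x,y)$ be compact Hausdorff, I would extract from the cylindrically normal structure on $X$ together with the compactness of closed cells in a finite CW complex, referring back to the precise definition of CNSSS and its face category in \S\ref{SSS_basics}.

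Granting these hypotheses, Theorem \ref{theorem:conically_stratified} tells us that the unstable stratification on $BC$ is conically stratified. The next step is to transport this conclusion along the homeomorphism $BC(X)\cong X$ of \cite{1609.04500}. The essential point here is not merely that the underlying spaces agree, but that this homeomorphism is an \emph{isomorphism of stratified spaces}, carrying the unstable stratification on $BC(X)$ onto the given CNSSS stratification on $X$. This is exactly the compatibility recorded in the discussion following Theorem \ref{theorem:stratification}, where it is noted that for a CW complex the stratification produced by Theorem \ref{theorem:stratification} agrees with the original cell decomposition under $X\cong BF(X)$. Since being conically stratified is phrased entirely in terms of the existence of conical neighborhoods, it is invariant under stratified homeomorphism, and so $X$ inherits the property from $BC(X)$.

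Finally, with $X$ known to be conically stratified, Lurie's Theorem A.6.4 (1) of \cite{LurieHigherAlgebra}---already cited in the introduction---immediately yields that $\Exit(X)$ is a quasi-category. The step I expect to be the main obstacle is the middle one: verifying that the homeomorphism $BC(X)\cong X$ is genuinely stratified, i.e.\ that it identifies the unstable stratification with the original cell decomposition rather than merely matching the two spaces topologically. Once that is secured, the rest is a routine unwinding of the finiteness of $X$ to confirm the hypotheses of Theorem \ref{theorem:conically_stratified}.
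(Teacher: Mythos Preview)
Your approach is correct and is essentially the same as the paper's: reduce to Theorem~\ref{theorem:conically_stratified} via the homeomorphism $BC(X)\cong X$, after checking the finiteness and compactness hypotheses on $C(X)$. The stratified compatibility you flag as the main obstacle is exactly what the paper records as Proposition~\ref{stratification_on_BC(X)} (which you have not yet seen): for a CW stellar complex the embedding $i_X:BC(X)\to X$ is an isomorphism of stellar stratified spaces, so in particular of stratified spaces. Note, however, that the passage you cite after Theorem~\ref{theorem:stratification} concerns only \emph{regular} CW complexes and their face \emph{posets}; the statement you need is the more general Proposition~\ref{stratification_on_BC(X)}, and the compact Hausdorff condition on the parameter spaces $P_{\mu,\lambda}=C(X)(\mu,\lambda)$ is genuinely an extra hypothesis there rather than an automatic consequence of $X$ being a finite CW complex, so your instinct to isolate that verification is well placed.
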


Examples of CW complexes with such a structure are abundant.
Regular CW complexes are CNSSS. Among non-regular CW complexes, real and
complex projective spaces are typical examples. See Example 4.25, 4.26,
and 4.27 of \cite{1609.04500}. See \S4.2 of the paper for more examples.
PLCW complexes introduced by Alexander
Kirillov, Jr.~\cite{1009.4227} also provide non-regular examples of CNSSS.

Question \ref{question:discrete_Morse} is more directly related to
Question \ref{question:recover}. Since the classifying space in Question
\ref{question:discrete_Morse} is defined as the
classifying space of an acyclic topological category, Theorem
\ref{theorem:stratification} can be applied.

\begin{theorem}
 \label{theorem:discrete_Morse}
 For a discrete Morse function $f$ on a finite regular CW complex $X$,
 there exists a structure of a CNSSS on the classifying space $B^2C(f)$
 of the flow category $C(f)$ constructed in 
 \cite{1612.08429} satisfying the following conditions:
 \begin{enumerate}
  \item Strata are indexed by the set of critical cells of $f$.
  \item The face category is isomorphic to the topological category
	$BC(f)$ associated with the $2$-category $C(f)$.
 \end{enumerate}
\end{theorem}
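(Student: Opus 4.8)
The plan is to realize $B^2C(f)$ as the ordinary classifying space of a topological category and then invoke Theorem~\ref{theorem:stratification}. Recall from \S\ref{discrete_Morse_theory} that the classifying space of the $2$-category $C(f)$ is formed in two stages: one first applies the classifying-space functor to each hom-poset, producing a $\Top$-enriched category $BC(f)$ with $\Ob(BC(f)) = \Ob(C(f))$ and $BC(f)(\sigma,\tau) = B\bigl(C(f)(\sigma,\tau)\bigr)$, and then sets $B^2C(f) = B(BC(f))$. Thus it suffices to check that the topological category $BC(f)$ satisfies the hypotheses of Theorem~\ref{theorem:stratification}; the conclusion of that theorem, applied to $C = BC(f)$, is then precisely condition (2), while its strata are indexed by $\Ob(BC(f)) = \Ob(C(f))$, the set of critical cells of $f$, which gives condition (1).

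It remains to verify the hypotheses. By the construction of \cite{1612.08429} the objects of $C(f)$ are the critical cells of $f$; since $X$ is finite this set is finite, and we equip it with the discrete topology. Each morphism poset $C(f)(\sigma,\tau)$ is finite, because the gradient paths recorded by the flow construction on a finite complex are finite in number; hence $B\bigl(C(f)(\sigma,\tau)\bigr)$ is a finite simplicial complex and in particular compact Hausdorff, as required. Finiteness of the object set makes $P(BC(f))_{<\sigma}$ automatically finite for every $\sigma$, so the remaining numerical hypothesis holds for free.

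The one substantive point is acyclicity of $BC(f)$, and this is the step I expect to require the most care. I would deduce it from the acyclicity of the $2$-category $C(f)$ established in \cite{1612.08429}: the discrete Morse function orders the critical cells strictly along the flow, so the object-level relation is a genuine (antisymmetric) partial order and the endomorphism posets $C(f)(\sigma,\sigma)$ are trivial. Passing to classifying spaces of the hom-posets alters neither the object set nor the pattern of $1$-morphisms between distinct objects, and sends each trivial endomorphism poset to a point, so that $BC(f)(\sigma,\sigma)$ is a single point and antisymmetry of the object order still forbids $BC(f)(\sigma,\tau)$ and $BC(f)(\tau,\sigma)$ from being simultaneously nonempty when $\sigma\neq\tau$; hence $BC(f)$ has no nonidentity endomorphisms and no nonidentity isomorphisms. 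The delicate part is to confirm that the notion of acyclicity guaranteed by the flow construction matches the hypothesis of Theorem~\ref{theorem:stratification} verbatim, and that it is stable under the passage $C(f)\mapsto BC(f)$. Once this is settled, Theorem~\ref{theorem:stratification} produces the desired CNSSS structure on $B^2C(f) = B(BC(f))$, together with conditions (1) and (2).
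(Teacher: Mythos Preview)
Your proposal is correct and follows essentially the same route as the paper: realize $B^{2}C(f)$ as $B(BC(f))$ and apply Theorem~\ref{theorem:stratification} to the top-enriched category $BC(f)$ after checking acyclicity, discreteness of objects, compact Hausdorff hom-spaces, and local finiteness. The paper's own proof is terser and invokes de Seguins Pazzis for the Hausdorff property of the hom-spaces, whereas you obtain compact Hausdorffness directly from the fact that each $C(f)(\sigma,\tau)$ is a finite poset and hence $B(C(f)(\sigma,\tau))$ is a finite simplicial complex; this is more elementary and perfectly adequate here.
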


The classifying space $B^2C(f)$ has a canonical structure of a cell
complex but this cell decomposition is much finer than the one obtained
from smooth Morse theory.
For example, consider the acyclic partial matching on the
boundary of a $3$-simplex $[v_{0},v_{1},v_{2}]$  in Figure
\ref{figure:height_function} which corresponds to a
``height function'' $h$.

\begin{figure}[ht]
\begin{center}
 \begin{tikzpicture}
  \draw [dotted] (0.5,2.5) -- (0,0);
  \draw (0,0) -- (1,1.5) -- (-1,2) -- (0,0);
  \draw (-1,2) -- (0.5,2.5) -- (1,1.5);
  \draw [fill] (0,0) circle (2pt);
  \draw [fill] (0.5,2.5) circle (2pt);
  \draw [fill] (-1,2) circle (2pt);
  \draw [fill] (1,1.5) circle (2pt);

	  
  \draw (0,-0.4) node {$v_{0}$};
  \draw (1.4,1.4) node {$v_{1}$};
  \draw (0.6,2.9) node {$v_{2}$};
  \draw (-1.4,2.1) node {$v_{3}$};
	  
	  
  \draw [->] (1,1.3) -- (0.8,1);
  \draw [->] (0.48,2.3) -- (0.38,1.8);
  \draw [->] (-1,1.8) -- (-0.8,1.4);
  \draw [->] (0,1.75) -- (0,1.4);
  \draw [->] (0.75,2) -- (0.65,1.6);
  \draw [->] (-0.25,2.25) -- (-0.15,1.75);
 \end{tikzpicture}
\end{center}
 \caption{An acyclic partial matching on $\partial[v_{0},v_{1},v_{2}]$}
 \label{figure:height_function}
\end{figure}
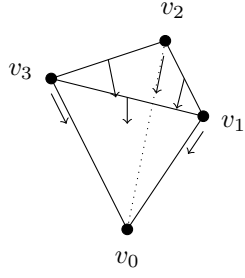

Matched pairs are indicated by arrows. For example, the $0$-simplex
$[v_{1}]$ is matched with a $1$-simplex $[v_{0},v_{1}]$ and the
$1$-simplex $[v_{1},v_{2}]$ is matched with a $2$-simplex
$[v_{0},v_{1},v_{2}]$. 
The $2$-category $C(h)$ has
two objects corresponding to critical simplices, i.e.~the top face
$[v_{1},v_{2},v_{3}]$ and 
the bottom vertex $[v_{0}]$.
As is shown in Example 3.17 of \cite{1612.08429}, the category (poset) of
morphisms from $[v_{0}]$ to 
$[v_{1},v_{2},v_{3}]$ is isomorphic to the face poset of
$\partial[v_{1},v_{2},v_{3}]$, and hence its classifying space is the
boundary of a hexagon. Thus the classifying space of $C(h)$ is a regular
cell complex 
consisting of two $0$-cells, six $1$-cells, and six $2$-cells. However
the cell decomposition of $S^2$ we usually obtain from a height function
is the minimal cell decomposition $S^2=e^{0}\cup e^2$. We should glue
six $2$-cells, six $1$-cells, and one of the $0$-cells together
to obtain a single $2$-cell $e^2$ so that we have
$B^2C(h)=e^{0}\cup e^2$.
The motivation for Question \ref{question:discrete_Morse} is to generalize
this construction and Theorem \ref{theorem:discrete_Morse} solves the
problem. 

From the viewpoint of topological combinatorics, Theorem
\ref{theorem:stratification} is closely related to the well-known
characterization of the face poset of a regular CW complex by
Bj{\"o}rner. 

\begin{theorem}[\cite{Bjorner1984}]
 The category of finite regular CW complexes is equivalent to
 the category of finite CW posets via the face poset functor.
\[
 F: \category{RegCW}^{f} \rarrow{\simeq} \category{CWPoset}^{f}.
\]
\end{theorem}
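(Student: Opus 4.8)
The plan is to prove the equivalence by exhibiting a quasi-inverse built from order complexes, after first checking that the face poset of a finite regular CW complex is indeed a finite CW poset. Recall that for a regular CW complex $X$ the face poset $F(X)$ has the cells of $X$ as elements, ordered by $\sigma \le \tau \iff \sigma \subseteq \overline{\tau}$, and that $P$ is a finite CW poset when, for every $x \in P$, the order complex $\Delta(P_{<x})$ of the subposet $P_{<x} = \set{y \in P}{y < x}$ is homeomorphic to a sphere (with the empty complex counted as $S^{-1}$, so that minimal elements are the $0$-cells). Throughout I would use the classical reconstruction fact that a regular CW complex is homeomorphic to the geometric realization $|\Delta(F(X))|$ of the order complex of its face poset (the homeomorphism $X \cong BF(X)$ alluded to above); this is itself proved by induction on skeleta, each closed cell being the cone on its boundary sphere.

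For well-definedness of $F$, fix a $d$-cell $e$ of $X$. By regularity the characteristic map is a homeomorphism $D^d \rarrow{} \overline{e}$, so $\partial\overline{e} = \overline{e}\setminus e$ is homeomorphic to $S^{d-1}$ and is a regular CW subcomplex whose cells are exactly the elements of $F(X)_{<e}$. Applying the reconstruction fact to this subcomplex gives $|\Delta(F(X)_{<e})| \cong \partial\overline{e} \cong S^{d-1}$, which is precisely the CW poset condition at $e$. Hence $F(X) \in \category{CWPoset}^{f}$, and $F$ is a well-defined, evidently functorial, assignment.

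For essential surjectivity I would construct, for each finite CW poset $P$, a regular CW complex $X_P$ with $F(X_P) \cong P$ by induction on the rank. I would take $X_P = |\Delta(P)|$ as a space and build its cell structure skeleton by skeleton: assuming the rank-$(k-1)$ truncation $P^{(k-1)}$ has been realized as a regular CW complex with underlying space $|\Delta(P^{(k-1)})|$ and face poset $P^{(k-1)}$, attach one $k$-cell for each rank-$k$ element $x$. Here the CW poset hypothesis supplies a homeomorphism $|\Delta(P_{<x})| \cong S^{k-1}$ onto a subcomplex already built, to be used as the attaching map; since $x$ is the unique maximum of $P_{\le x}$, the order complex $\Delta(P_{\le x})$ is the cone $\cone(\Delta(P_{<x}))$ with apex the vertex $x$, so the characteristic map is the induced homeomorphism $D^k \cong \cone(S^{k-1}) \cong |\Delta(P_{\le x})|$. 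The closed cell is then $|\Delta(P_{\le x})|$, a ball glued along the sphere $|\Delta(P_{<x})|$, so the resulting structure is regular and its face poset is exactly $P^{(k)}$.

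Finally, fullness and faithfulness follow formally: a poset isomorphism $P \cong Q$ induces a simplicial isomorphism $\Delta(P) \cong \Delta(Q)$ and hence a cell-preserving homeomorphism $X_P \cong X_Q$, i.e.\ an isomorphism in $\category{RegCW}^{f}$, while conversely a cellular homeomorphism induces an isomorphism of face posets; these assignments are mutually inverse. Combined with the preceding two steps this makes $F$ an equivalence. The main obstacle is the inductive attachment in the surjectivity step: everything rests on the sphericity condition, and the delicate point is to verify that coning off $|\Delta(P_{<x})|$ attaches a genuine ball producing no spurious incidences, so that the face poset of $X_P$ is $P$ on the nose rather than some coarsening of it — this is exactly where regularity and the identification $\Delta(P_{\le x}) = \cone(\Delta(P_{<x}))$ are indispensable.
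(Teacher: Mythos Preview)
The paper does not prove this statement; it is quoted as a classical result of Bj{\"o}rner with a citation to \cite{Bjorner1984}, and serves only as background motivation for the paper's own generalization (the equivalence $\category{CNCW}\simeq\category{AcycTopCat}^{lf}_{\cpt,\Haus}$). There is therefore no proof in the paper to compare your attempt against.

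That said, your outline is the standard Bj{\"o}rner argument and is correct in substance. One point worth tightening: in the fullness and faithfulness step you treat only isomorphisms. If the categories $\category{RegCW}^{f}$ and $\category{CWPoset}^{f}$ are taken with all cellular maps and all poset maps as morphisms (rather than just isomorphisms), you still owe the verification that every poset map $F(X)\to F(Y)$ is induced by a unique cellular map $X\to Y$; this again goes by induction on skeleta using the cone description $\overline{e}\cong\cone(\partial\overline{e})$, but it is not quite as automatic as you suggest. If, as in many combinatorial references, the categories carry only isomorphisms, your argument is complete as stated.
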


Let $\category{AcycTopCat}$ and $\category{AcycTopCat}^{lf}_{\cpt,\Haus}$
denote the category of acyclic topological categories and the
full subcategory of those satisfying the conditions of Theorem
\ref{theorem:stratification}, respectively. 
The face poset functor for regular CW complexes has been extended to the
face category functor in \cite{1609.04500}
\begin{equation}
 C: \category{CNSSS} \rarrow{} \category{AcycTopCat}
  \label{equation:face_category}
\end{equation}
from the category of CNSSSs to the category of acyclic topological
categories. 
Our construction in Theorem \ref{theorem:stratification} is a right
inverse to this face category functor when restricted to appropriate
subcategories. 

\begin{theorem}
 \label{theorem:equivalence}
 Let $\category{CNCW}$ be the full subcategory of $\CNSSS$ consisting of
 cylindrically normal CW stellar complexes. Then the
 restriction of the face category functor (\ref{equation:face_category})
 \[
 C: \category{CNCW} \rarrow{} \category{AcycTopCat}^{lf}_{\cpt,\Haus}  
 \]
 is an equivalence of categories.
\end{theorem}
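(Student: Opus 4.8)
The plan is to construct an explicit quasi-inverse to $C$ out of the unstable stratification and to verify that both composites are naturally isomorphic to the respective identities; by the usual criterion this makes $C$ an equivalence. First I would promote the object-level statement of Theorem \ref{theorem:stratification} to a functor
\[
 B\colon \category{AcycTopCat}^{lf}_{\cpt,\Haus} \longrightarrow \category{CNCW},
\]
sending an acyclic topological category $D$ to its classifying space $BD$ equipped with the unstable stratification. On objects this is exactly Theorem \ref{theorem:stratification}, except that one must still check that the resulting cylindrically normal stellar stratified space is a \emph{CW} stellar complex, i.e.\ that it actually lands in $\category{CNCW}$. This is where the standing hypotheses enter: compact Hausdorffness of each $D(x,y)$ forces the stellar cells to have compact parameter spaces, and finiteness of each $P(D)_{<x}$ guarantees that every cell meets only finitely many lower strata, so that the characteristic maps assemble into an honest CW structure.

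On morphisms, a continuous functor $\varphi\colon D \to E$ induces a continuous map $B\varphi\colon BD \to BE$ of classifying spaces, and I would check that $B\varphi$ carries the unstable stratification of $BD$ into that of $BE$ and is compatible with the cylindrical data, so that it is a genuine morphism of $\category{CNCW}$; functoriality $B(\psi\varphi)=B\psi\,B\varphi$ is then inherited from that of the classifying space construction. This step is the part I expect to demand the most care: a priori $B\varphi$ is merely a continuous map, and one must extract its stratified and cylindrical compatibility from the sole hypothesis that $\varphi$ is continuous on the (compact Hausdorff) morphism spaces. The main obstacle is thus not the combinatorics but showing that $B$ is well defined on morphisms with image inside $\category{CNCW}$, together with the CW-ness check above.

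With $B$ in hand, the equivalence follows from two natural isomorphisms. The composite $C\circ B \cong \id$ is precisely the face-category conclusion of Theorem \ref{theorem:stratification}, which identifies the face category of the unstable stratification on $BD$ with $D$ itself; here I would check that this identification is natural in $D$, upgrading it from an isomorphism of objects to a natural isomorphism of functors. For the other composite $B\circ C \cong \id_{\category{CNCW}}$ I would invoke the result of \cite{1609.04500} that for a CW stellar complex $X$ the canonical map $X \to BC(X)$ is a homeomorphism, checking in addition that this homeomorphism is an isomorphism in $\category{CNCW}$ — matching the CW-stellar structure of $X$ with the unstable stratification of $BC(X)$ — and that it is natural in $X$.

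Granting these two natural isomorphisms, $C$ is an equivalence of categories with quasi-inverse $B$, and both essential surjectivity and full faithfulness are immediate consequences (for instance, fullness: any continuous functor $\varphi\colon C(X)\to C(Y)$ is realized by transporting $B\varphi$ across the isomorphisms $X\cong BC(X)$ and $BC(Y)\cong Y$). The delicate points, as flagged above, are (i) that the unstable stratification of $BD$ is CW rather than merely stellar under the compactness and local-finiteness hypotheses, and (ii) that $B$ is defined on morphisms and lands in $\category{CNCW}$; by contrast the naturality statements should be formal once the relevant identifications are spelled out.
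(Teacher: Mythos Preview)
Your proposal is correct and follows essentially the same strategy as the paper: build a quasi-inverse $B$ via the unstable stratification on $BD$ (Theorem~\ref{theorem:stratification}), verify that $BD$ lands in $\category{CNCW}$, and then use the identification $C(BD)\cong D$ together with the homeomorphism $i_X\colon BC(X)\to X$ from \cite{1609.04500} (upgraded to an isomorphism in $\category{CNCW}$ by Proposition~\ref{stratification_on_BC(X)}) for the two composites.

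One point of emphasis differs. You treat the CW verification somewhat loosely, deriving it from closure finiteness alone; the paper's short proof is devoted almost entirely to this step, and in particular checks the \emph{weak topology} condition explicitly via a quotient-map argument (the map $\bigcup_{x} s_x\colon \coprod_x D_x \to BC$ is a quotient because both $p_C$ and $\coprod_x p_{D_x}$ are). Conversely, the paper is silent on functoriality of $B$ and naturality of the two isomorphisms, which you rightly flag as requiring verification; these are indeed routine once one unwinds the definitions, but the paper simply takes them for granted. So your outline is, if anything, more complete on the categorical side, while the paper fills in the one topological detail you skim over.
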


This paper is organized as follows.
\begin{itemize}
 \item \S\ref{recollections} is preliminary. We fix notion and
       terminology for nerves and classifying spaces
       of small categories, including simplicial techniques.
 \item \S\ref{SSS} collects necessary materials for stellar stratified
       spaces from \cite{1609.04500} with some generalizations and
       extensions. 
 \item \S\ref{stellar_stratification_on_BC} is the main part. Theorems
       mentioned above are proved.       
 \item We conclude this paper by a couple of remarks and comments in
       \S\ref{remarks}. 
\end{itemize}

\subsection{Acknowledgments}

This project started when the authors were invited to the IBS Center
for Geometry and Physics in Pohang in December, 2016.
We would like to thank the center for invitation and the nice
working environment.

The contents of this paper was presented by the first author during the 7th
East Asian Conference on Algebraic Topology held at Mohali, India, in
December, 2017.
He is grateful to the local organizers for the invitation to the
conference and the hospitality of ISSER, Mohali.

The authors would like to thank the anonymous referee whose valuable
suggestions improved expositions and made this paper more readable.

\section{Recollections}
\label{recollections}


\subsection{Simplicial Terminology}
\label{simplicial}

Here we fix notation and terminology for simplicial homotopy theory.

\begin{definition} \hspace*{\fill}
 \begin{enumerate}
  \item The category of isomorphism classes of finite totally ordered
	sets and order preserving maps is denoted by $\Delta$. 
  \item The wide subcategory of injective maps is denoted by
	$\Delta_{\inj}$.
  \item Objects in these categories are represented by
	$[n]=\{0<1<\cdots<n\}$ for nonnegative integers $n$.
  \item A \emph{simplicial space} is a functor $X:\Delta^{\op}\to\Top$,
	where $\Top$ is the category of topological spaces and continuous
	maps. 
  \item Dually a \emph{cosimplicial space} is a functor $Y:\Delta\to\Top$.
  \item A functor $X:\Delta_{\inj}^{\op}\to \Top$ is called a
	\emph{$\Delta$-space}. 
  \item For a nonnegative integer $n$, the \emph{geometric $n$-simplex}
	is defined by
	\[
	\DDelta^{n} =
	\set{(t_{0},\ldots,t_{n})\in\R^{n+1}}{\sum_{i=0}^{n} t_{i}=1,
	t_{i}\ge 0}.
	\]
  \item The geometric realization of a simplicial space $X$ is denoted
	by $|X|$, while the geometric realization of a
	$\Delta$-space\footnote{$\Delta$-spaces are sometimes called
	semisimplicial spaces, e.g.~in \cite{LurieHigherToposTheory} and
	\cite{1705.03774}.} is
	denoted by $\|X\|$.
 \end{enumerate}

\end{definition}
\subsection{Nerves and Classifying Spaces}
\label{nerve}

Let us first recall basic properties of the classifying space
construction.
For a topological category $C$, the spaces of objects and morphisms are
denoted by $C_{0}$ and $C_{1}$, respectively. The space of morphisms
from $x$ to $y$ is denoted by $C(x,y)$.
The space $N_{k}(C)$ of $k$-chains in $C$ is defined to be the set of
all functors $[k]\to C$ topologized as a subspace of $C_1^{k}$ under the
identification 
\[
 N_{k}(C) \cong \set{(u_{k},u_{k-1},\ldots,u_{1})\in
 C_{1}^{k}}{x_{0} \rarrow{u_{1}} x_{1} \rightarrow
 \cdots \rightarrow x_{k-1} \rarrow{u_{k}} x_{k}}.
\]
With this notation the structure of a category is given by a pair of
maps 
\begin{align*}
 \circ & : N_{2}(C) \rarrow{} C_{1} \\
 \iota & : C_{0} \rarrow{} C_{1}
\end{align*}
satisfying the associativity and the unit conditions.

The collection $N(C)=\{N_{k}(C)\}_{k\ge 0}$ can be made into a simplicial
space, called the \emph{nerve} of $C$.
The geometric realization of the nerve is denoted by $BC$ and is called
the \emph{classifying space} of $C$. The defining quotient map is
denoted by
\begin{equation}
 p_{C}:\coprod_{k\ge 0} N_{k}(C)\times\DDelta^{k} \rarrow{} BC.
  \label{quotient_for_BC}
\end{equation}

We are mainly interested in acyclic categories.

\begin{definition}
\label{definition:acyclic_category}
 A topological category $C$ is called \emph{acyclic} if the following
 conditions are satisfied:
 \begin{enumerate}
  \item For any pair of distinct objects $x,y\in C_{0}$, either $C(x,y)$
	or $C(y,x)$ is empty.
  \item For any object $x\in C_{0}$, $C(x,x)$ consists only of the
	identity morphism.
  \item Regard $C_{0}$ as the subspace of identity morphisms in
	$C_{1}$. Then
	$C_{1}= C_{0}\amalg (C_{1}\setminus C_{0})$ as topological spaces.
 \end{enumerate}
 For $x,y\in C_{0}$, define $x\le y$ if and only if
 $C(x,y)\neq\emptyset$. When $C$ is acyclic, $C_{0}$ becomes a poset
 under this relation. This poset is denoted by $P(C)$.

 An acyclic topological category $C$ is called a
 \emph{topological poset} if it is a poset when the topology is
 forgotten. 
\end{definition}


The last condition in the definition of acyclicity simplifies the
description of the classifying space $BC$.

\begin{lemma}
 \label{lemma:nerve_of_acyclic_category}
 For an acyclic topological category $C$, define
 \[
  \nN_{k}(C) = N_{k}(C) \setminus \bigcup_{i} s_{i}N_{k-1}(C),
 \]
 where $s_i: N_{k-1}(C)\to N_{k}(C)$ is the $i$-th degeneracy operator.
 Then the collection $\nN(C)=\{\nN_{k}(C)\}$ together with restrictions
 of the face operators in $N(C)$ forms a
 $\Delta$-space and
 the canonical inclusion $\nN(C)\hookrightarrow N(C)$ induces a
 homeomorphism $\left\|\nN(C)\right\| \cong |N(C)|=BC$.
\end{lemma}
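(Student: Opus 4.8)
The plan is to establish the two assertions separately: first that $\nN(C)$ is a $\Delta$-space, and then that the induced map on realizations is a homeomorphism.

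For the $\Delta$-space structure, the starting observation is that, by acyclicity, a $k$-chain $x_0 \rarrow{u_1} x_1 \to \cdots \rarrow{u_k} x_k$ is nondegenerate exactly when no $u_j$ is an identity; by conditions (1) and (2) of Definition \ref{definition:acyclic_category} this holds if and only if $x_0 < x_1 < \cdots < x_k$ is a strictly increasing chain in $P(C)$. It therefore suffices to check that every face operator carries a strictly increasing chain to a strictly increasing chain. The outer faces merely delete one of the end morphisms and leave the remaining ones untouched, so they preserve nondegeneracy. An inner face $d_i$ replaces an adjacent segment $x_{i-1}\to x_i \to x_{i+1}$ by its composite $x_{i-1}\to x_{i+1}$; since $x_{i-1}<x_i<x_{i+1}$ we get $x_{i-1}<x_{i+1}$ in $P(C)$, so the composite is again a non-identity and the chain remains nondegenerate. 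Hence the face operators restrict to maps $\nN_k(C)\to\nN_{k-1}(C)$, and the face identities, which already hold in $N(C)$, restrict to $\nN(C)$; this exhibits $\nN(C)$ as a functor on $\Delta_{\inj}^{\op}$, i.e.\ a $\Delta$-space.

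For the homeomorphism, the crucial structural input is acyclicity condition (3), which says $C_0$ is a topological direct summand of $C_1$, i.e.\ the subspace of identities is clopen. It follows that for each $k$ the locus in $N_k(C)\subseteq C_1^{k}$ on which a prescribed set of coordinates are identities is clopen, so $N_k(C)$ splits as a genuine topological coproduct
\[
N_k(C) \cong \coprod_{\phi:[k]\twoheadrightarrow[m]} \nN_m(C),
\]
indexed by the surjections of $\Delta$, the summand attached to $\phi$ being the image of the corresponding iterated degeneracy. This is the topological form of the Eilenberg--Zilber decomposition: it exhibits $N(C)$ as a split simplicial space and, in particular, shows each degeneracy $s_i$ is the inclusion of a clopen summand, hence a closed cofibration. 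Granting this, I would compare the two realizations through their skeletal filtrations. By definition $\|\nN(C)\|$ is built by the pushouts attaching $\nN_n(C)\times\DDelta^n$ along $\nN_n(C)\times\partial\DDelta^n$, and the splitting above shows that $|N(C)|=BC$ is assembled by exactly the same pushouts, the degenerate simplices of $N_n(C)$ already being identified with cells of lower skeleta. The comparison map induced by $\nN(C)\hookrightarrow N(C)$ respects these filtrations and is the identity on $0$-skeleta $C_0\to C_0$; an induction on $n$, using that the attaching data agree, shows it restricts to a homeomorphism $\sk_n\|\nN(C)\|\to\sk_n BC$ at each stage. As both sides are the colimits of their skeleta along closed cofibrations, passing to the colimit yields $\|\nN(C)\|\cong BC$.

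The main obstacle is the topological splitting: upgrading the set-level Eilenberg--Zilber lemma to a genuine coproduct decomposition of $N_k(C)$. This is precisely where hypothesis (3) is indispensable, since without the clopen splitting of $C_0$ inside $C_1$ the images of the degeneracy operators need not be well separated, $N(C)$ need not be split, and the realization of the nondegenerate part can fail to recover $BC$. Once the topological splitting and the resulting closed-cofibration property are in hand, the skeletal induction comparing $\|\nN(C)\|$ with $BC$ is routine.
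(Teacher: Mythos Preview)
The paper states this lemma without proof, treating it as a known fact about acyclic topological categories. Your argument is correct and supplies the details the paper omits. The first part is straightforward once you observe that nondegenerate chains are exactly the strictly increasing chains in $P(C)$. For the homeomorphism, your identification of condition~(3) in Definition~\ref{definition:acyclic_category} as the crucial hypothesis is exactly right: the clopen splitting $C_1=C_0\amalg(C_1\setminus C_0)$ yields the topological Eilenberg--Zilber decomposition $N_k(C)\cong\coprod_{\phi}\nN_m(C)$, so that the degenerate part of $N_n(C)$ is a clopen summand and the skeletal pushouts building $|N(C)|$ reduce to those building $\|\nN(C)\|$.

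One small sharpening: the phrase ``hence a closed cofibration'' slightly undersells what you actually use. Knowing only that degeneracies are closed cofibrations would give a homotopy equivalence between fat and thin realizations of $N(C)$, not the homeomorphism $\|\nN(C)\|\cong|N(C)|$ you want. What makes the skeletal comparison go through as a homeomorphism is the stronger fact that $\nN_n(C)$ splits off $N_n(C)$ as a topological summand, which you do establish and use; it would be cleaner to emphasize that directly.
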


The $\Delta$-space $\nN(C)$ is called the \emph{nondegenerate nerve} of
$C$.

\section{Stellar Stratified Spaces and Their Face Categories}
\label{SSS}

The notion of stellar stratified spaces was introduced by the first
author in \S2.4 of \cite{1609.04500}.
Roughly speaking, a stellar stratification on a topological space $X$ is
a stratification of $X$ together with identifications of strata
with ``star-shaped cells''.
In \cite{1609.04500}, these star-shaped cells are defined as subspaces
of closed disks. Here we extend the definition by using cones on
stratified spaces.

\subsection{Stratifications by Posets}
\label{stratification}

Before we give a definition of stellar stratified spaces, we need to
clarify what we mean by a stratification, since the meaning of
stratification varies in the literature. 
Generally, a stratification of a topological space $X$ is a
decomposition of $X$ into a mutually disjoint union of subspaces,
satisfying some boundary conditions. The boundary conditions are often
described in terms of posets. 

Decomposing a space $X$ into a union of mutually disjoint subspaces
\[
 X = \bigcup_{\lambda\in\Lambda} e_{\lambda}.
\]
is equivalent to giving a surjective map $\pi : X\to \Lambda$ with
$\pi^{-1}(\lambda)=e_{\lambda}$. 
When can we call such a decomposition a stratification? Several
conditions have been proposed. Recall that a partial order on $\Lambda$
generates a topology, called the Alexandroff topology, on $\Lambda$.
Here we use the following definition from \cite{1609.04500}.

\begin{definition}
 \label{definition:stratification}
 A \emph{stratification} of a topological space $X$ by a poset $\Lambda$
 is an open continuous map $\pi : X\to\Lambda$ such that
 $\pi^{-1}(\lambda)$ is connected and locally closed for each
 $\lambda\in\Ima\pi$, where $\Lambda$ is equipped with the Alexandroff
 topology.
 Such a pair $(X,\pi)$ is called a \emph{$\Lambda$-stratified space}
 when $\pi$ is surjective.
 
 The image of $\pi$ is denoted by $P(X)$ and is called the \emph{face
 poset} of $X$. It is regarded as a full subposet of $\Lambda$.
 For $\lambda\in P(X)$,
 $\pi^{-1}(\lambda)$ is called the \emph{stratum} indexed by $\lambda$
 and is denoted by $e_{\lambda}$.
\end{definition}

Nowadays many authors use simpler definitions. The simplest one
only assumes the continuity of $\pi$. 
On the other hand, we need to impose some additional ``niceness''
conditions to have a good hold on the topological properties of
stratified spaces.

In order to understand the meanings of these conditions, let us compare
the following five conditions.

\begin{enumerate}
 \item \label{Andrade} $\pi$ is continuous.
 \item \label{open} $\pi$ is an open map.
 \item \label{Tamaki} For any $\mu,\lambda\in\Lambda$,
       $e_{\mu}\subset \overline{e_{\lambda}}$ if and 
       only if $\mu\le\lambda$. 
 \item \label{normal} For any $\mu,\lambda\in\Lambda$,
       $e_{\mu}\cap\overline{e_{\lambda}}\neq\emptyset$
       implies $e_{\mu}\subset \overline{e_{\lambda}}$, or equivalently,
       for any $\lambda\in\Lambda$, 
       $\displaystyle\overline{e_{\lambda}}=\bigcup_{e_{\mu}\cap\overline{e_{\lambda}}\neq\emptyset} e_{\mu}$. 
 \item \label{Hiro} For any closed subset $C\subset\Lambda$, 
       $\displaystyle\bigcup_{\lambda\in C}\overline{e_{\lambda}}$ is
       closed. 
\end{enumerate}

The condition \ref{Andrade} seems to be the most popular one these
days. It is used in Andrade's thesis 
\cite{AndradeThesis}, Lurie's book \cite{LurieHigherAlgebra}, papers 
concerning factorization homology \cite{1409.0501,1409.0848}, and so on.
The combination \ref{Andrade}+\ref{open} is used by the first author
\cite{1609.04500}. 
The condition \ref{Tamaki} is used by the first author in a series of
talks at the Center for Geometry and Physics in Pohang.
Cell complexes satisfying the condition \ref{normal} are usually
called \emph{normal} \cite{Lundell-Weingram}.
This condition has been
also known as ``the axiom of the frontier'' in the classical
stratification theory due to Thom \cite{Thom69} and Mather
\cite{MatherNotes}. 
The condition \ref{Hiro} was mentioned by the second author during the
above mentioned talks in Pohang.

The following fact is stated and proved in \cite{1609.04500} as Lemma 2.3.

\begin{proposition}
 \label{Andrade+open=Tamaki}
 Suppose \ref{Andrade} is satisfied. Then \ref{open} is equivalent to
 \ref{Tamaki}. 
\end{proposition}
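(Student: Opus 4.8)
The plan is to set up a clean dictionary between the Alexandroff topology on $\Lambda$ and the strata, and then to observe that \ref{open} and condition \ref{Tamaki} are two different bookkeepings of the same pointwise fact. First I would fix the convention that the open sets of $\Lambda$ are exactly the up-closed sets, so that the minimal open neighborhood of $\mu\in\Lambda$ is the principal up-set $\uparrow\!\mu=\set{\nu}{\nu\ge\mu}$, and the closure of a point is the principal down-set $\overline{\{\lambda\}}=\set{\mu}{\mu\le\lambda}$. This is the convention forced on us by continuity: with the opposite choice even the stratification $\{0\},(0,1),\{1\}$ of $[0,1]$ would fail to make $\pi$ continuous.

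With this in hand I would first dispatch the easy half using only \ref{Andrade}. Since $\overline{\{\lambda\}}=\set{\mu}{\mu\le\lambda}$ is closed in $\Lambda$, continuity gives that $\pi^{-1}\!\left(\set{\mu}{\mu\le\lambda}\right)$ is closed and contains $e_{\lambda}$, whence $\overline{e_{\lambda}}\subseteq\pi^{-1}\!\left(\set{\mu}{\mu\le\lambda}\right)$. Consequently, if $x\in e_{\mu}\cap\overline{e_{\lambda}}$ then $\mu=\pi(x)\le\lambda$; in particular the ``only if'' direction of \ref{Tamaki} (that a nonempty $e_{\mu}\subseteq\overline{e_{\lambda}}$ forces $\mu\le\lambda$) holds automatically whenever \ref{Andrade} holds. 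Thus, granting \ref{Andrade}, the full condition \ref{Tamaki} is equivalent to its ``if'' direction alone, namely $\mu\le\lambda\Rightarrow e_{\mu}\subseteq\overline{e_{\lambda}}$.

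The heart of the argument is then a pointwise reformulation of openness that uses no hypothesis at all. I would prove that $\pi$ is open if and only if for every $x\in X$, every open $U\ni x$, and every $\lambda\ge\pi(x)$ one has $U\cap e_{\lambda}\neq\emptyset$. Indeed $\pi(U)$ is open precisely when it is an up-set, i.e.\ when for each $\mu\in\pi(U)$ the whole of $\uparrow\!\mu$ lies in $\pi(U)$; unwinding ``$\mu\in\pi(U)$'' as ``$\exists x\in U$ with $\pi(x)=\mu$'' and ``$\lambda\in\pi(U)$'' as ``$U\cap e_{\lambda}\neq\emptyset$'' turns the statement that every $\pi(U)$ is an up-set into exactly the displayed condition. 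The last step is to recognize this condition as verbatim the assertion $\mu\le\lambda\Rightarrow e_{\mu}\subseteq\overline{e_{\lambda}}$: the inclusion $e_{\mu}\subseteq\overline{e_{\lambda}}$ says that every open neighborhood of every point of $e_{\mu}$ meets $e_{\lambda}$, and ranging $x$ over $e_{\mu}$ (so $\pi(x)=\mu$) and over all $\lambda\ge\mu$ reproduces it. Combining the three steps: under \ref{Andrade} we get \ref{open} $\iff$ (``if'' direction of \ref{Tamaki}) $\iff$ \ref{Tamaki}.

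I expect the only genuine subtlety — the main obstacle — to be the careful handling of quantifiers in the openness reformulation together with the bookkeeping of empty strata, since the ``only if'' direction of \ref{Tamaki} fails literally for an empty $e_{\mu}$. To avoid this I would phrase the comparison for $\mu,\lambda\in\Ima\pi$ (equivalently, note that empty strata satisfy both sides vacuously) and invoke the principal-up-set description of the Alexandroff opens consistently throughout, so that the passage between ``$\pi(U)$ is an up-set'' and ``$U$ meets all higher strata'' is exact.
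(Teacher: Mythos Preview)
Your argument is correct and in fact more complete than the paper's own proof. The paper proceeds via a general topological lemma characterizing open maps by the condition $f^{-1}(\overline{B})\subset\overline{f^{-1}(B)}$ for all $B$, and then specializes to $B=\{\lambda\}$ in the Alexandroff topology; as written, it only spells out the implication \ref{open}$\Rightarrow$\ref{Tamaki} and leaves the converse implicit. Your route bypasses that lemma entirely: you exploit the fact that openness of $\pi$ into an Alexandroff space means exactly that $\pi(U)$ is an up-set, and you translate this pointwise into the ``if'' half of \ref{Tamaki}, having first isolated that half as the only content of \ref{Tamaki} once continuity is assumed. This is more elementary and makes both directions explicit; it also surfaces the empty-stratum subtlety, which the paper's argument glosses over (and which, as you note, is harmless once one restricts to $\Ima\pi$). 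The paper's approach has the minor advantage that its auxiliary lemma is a reusable general fact about open maps, but for the purposes of this proposition your direct argument is cleaner.
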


For the convenience of the reader, we record a proof.
We use the following fact.

\begin{lemma}
 \label{open_and_continuous}
 A map $f:X\to Y$ between topological spaces is open if and only if 
 $f^{-1}(\overline{B})\subset\overline{f^{-1}(B)}$ for any
 $B\subset Y$. In particular, $f$ is open and continuous if and only if 
 $f^{-1}(\overline{B})=\overline{f^{-1}(B)}$ for any
 $B\subset Y$.
\end{lemma}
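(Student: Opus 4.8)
The plan is to isolate the openness characterization first and then recover the ``in particular'' clause by combining it with the standard description of continuity in terms of closures. Concretely, I would first prove the equivalence
\[
 f \text{ is open} \iff f^{-1}(\overline{B})\subset\overline{f^{-1}(B)} \text{ for every } B\subset Y,
\]
and separately recall that $f$ is continuous if and only if the reverse inclusion $\overline{f^{-1}(B)}\subset f^{-1}(\overline{B})$ holds for every $B\subset Y$. The ``in particular'' statement is then immediate: $f$ is open and continuous precisely when both inclusions hold for all $B$, i.e.~when $f^{-1}(\overline{B})=\overline{f^{-1}(B)}$ for all $B$.

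For the forward direction of the openness equivalence I would argue pointwise. Assume $f$ is open, fix $B\subset Y$, and take $x\in f^{-1}(\overline{B})$, so that $f(x)\in\overline{B}$. To conclude $x\in\overline{f^{-1}(B)}$ it suffices to show every open neighborhood $U$ of $x$ meets $f^{-1}(B)$. Since $f$ is open, $f(U)$ is an open set containing $f(x)\in\overline{B}$, hence $f(U)\cap B\neq\emptyset$; picking $u\in U$ with $f(u)\in B$ gives $u\in U\cap f^{-1}(B)$, as needed.

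For the converse I would use a complement computation. Assume the inclusion holds for all $B$, and let $U\subset X$ be open; I must show $f(U)$ is open. Set $B=Y\setminus f(U)$. From $U\subset f^{-1}(f(U))$ we get $f^{-1}(B)\subset X\setminus U$, and since $X\setminus U$ is closed this gives $\overline{f^{-1}(B)}\cap U=\emptyset$. The hypothesis then forces $f^{-1}(\overline{B})\cap U=\emptyset$, so $U\subset f^{-1}(Y\setminus\overline{B})$ and therefore $f(U)\subset Y\setminus\overline{B}$. Combined with the trivial inclusion $Y\setminus\overline{B}\subset Y\setminus B=f(U)$, this yields $f(U)=Y\setminus\overline{B}$, an open set.

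This is elementary point-set topology, so there is no serious obstacle; the only thing requiring care is keeping the two inclusions $f(f^{-1}(B))\subset B$ and $U\subset f^{-1}(f(U))$ straight and tracking which direction each operation reverses in the complement step of the converse. The continuity fact I invoke is the standard one ($f^{-1}(\overline{B})$ is a closed set containing $f^{-1}(B)$ when $f$ is continuous, so it contains $\overline{f^{-1}(B)}$; conversely, testing on closed $B$ recovers that preimages of closed sets are closed), so I would either cite it or dispatch it in a single sentence before assembling the equality.
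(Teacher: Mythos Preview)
Your proof is correct and follows essentially the same route as the paper's: the forward direction uses that $f(U)$ is an open neighborhood of $f(x)$ meeting $B$ (you phrase it directly, the paper by contradiction), and the converse applies the hypothesis to $B=Y\setminus f(U)$ to exhibit $f(U)$ as the complement of a closed set. Your explicit handling of the ``in particular'' clause via the standard closure characterization of continuity is a small addition the paper leaves implicit, but otherwise the arguments coincide.
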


\begin{proof}
 Suppose $f$ is open. Suppose further that
 $f^{-1}(\overline{B})\not\subset \overline{f^{-1}(B)}$. Then there 
 exists $x\in f^{-1}(\overline{B})\setminus \overline{f^{-1}(B)}$.
 In other words, $f(x)\in \overline{B}$ and there exists an open
 neighborhood $U$ of $x$ such that $U\cap f^{-1}(B)=\emptyset$. The
 first condition implies that $V\cap B\neq\emptyset$ for any open
 neighborhood $V$ of $f(x)$ in $Y$. The second condition implies that
 $f(U)\cap B=\emptyset$. Since $f$ is open, $f(U)$ is an open
 neighborhood of $f(x)$ and this is a contradiction.

 Conversely suppose that
 $f^{-1}(\overline{B})\subset\overline{f^{-1}(B)}$
 for any $B\subset Y$.
 For an open set $U\subset X$, we have
 \[
  f^{-1}(\overline{Y\setminus f(U)}) \subset \overline{f^{-1}(Y\setminus
 f(U))} \subset \overline{X\setminus U}=X\setminus U,
 \]
 which implies that $f^{-1}(\overline{Y\setminus f(U)})\cap U=\emptyset$
 or $\overline{Y\setminus f(U)}\cap f(U)=\emptyset$. Thus $f(U)$ is an
 open set.
\end{proof}

\begin{proof}[Proof of Proposition \ref{Andrade+open=Tamaki}]
 Suppose $\pi$ is continuous. When $\pi$ is open, Lemma
 \ref{open_and_continuous} implies that
 $\pi^{-1}(\overline{\{\lambda}\})=\overline{\pi^{-1}(\lambda)}$.
 Thus $e_{\mu}\subset\overline{e_{\lambda}}$ if and only if
 $\pi^{-1}(\mu)\subset \pi^{-1}(\overline{\{\lambda\}})$, which is
 equivalent to $\mu\in\overline{\{\lambda\}}$. By the definition of the
 Alexandroff topology, this is equivalent to $\mu\le\lambda$.
\end{proof}

Proposition \ref{Andrade+open=Tamaki} suggests a close relationship
between the normality condition and the openness of $\pi$.
In fact, we have the following. 

\begin{proposition}
 \label{Andrade+Tamaki->normal}
 The conditions \ref{Andrade} and \ref{Tamaki} imply the condition
 \ref{normal}.
\end{proposition}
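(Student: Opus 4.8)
The plan is to reduce the normality condition \ref{normal} to the order-theoretic equivalence supplied by \ref{Tamaki}. Concretely, I would fix $\mu,\lambda\in\Lambda$ with $e_{\mu}\cap\overline{e_{\lambda}}\neq\emptyset$ and aim to prove the single inequality $\mu\le\lambda$. Once this is in hand, the forward direction of \ref{Tamaki} immediately promotes it to the desired inclusion $e_{\mu}\subset\overline{e_{\lambda}}$, which is exactly \ref{normal}. So the entire content of the proposition is the implication ``$e_{\mu}$ meets $\overline{e_{\lambda}}$ $\Rightarrow$ $\mu\le\lambda$''.

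The key input is continuity of $\pi$ together with the description of the Alexandroff topology already used in the proof of Proposition \ref{Andrade+open=Tamaki}, namely that closed points are down-sets, so that $\overline{\{\lambda\}}=\set{\nu\in\Lambda}{\nu\le\lambda}$. Since this principal down-set is closed in $\Lambda$, condition \ref{Andrade} guarantees that $\pi^{-1}(\overline{\{\lambda\}})$ is a closed subset of $X$. As it contains $e_{\lambda}=\pi^{-1}(\lambda)$, it must contain the closure $\overline{e_{\lambda}}$, giving
\[
 \overline{e_{\lambda}} \subset \pi^{-1}(\overline{\{\lambda\}}) = \pi^{-1}\bigl(\set{\nu\in\Lambda}{\nu\le\lambda}\bigr).
\]

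I would then read off the conclusion pointwise. Choosing any $x\in e_{\mu}\cap\overline{e_{\lambda}}$, we have $\pi(x)=\mu$ because $x\in e_{\mu}=\pi^{-1}(\mu)$, while $x\in\overline{e_{\lambda}}$ forces $\pi(x)\le\lambda$ by the displayed containment. Hence $\mu\le\lambda$, and \ref{Tamaki} yields $e_{\mu}\subset\overline{e_{\lambda}}$.

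I do not expect a serious obstacle: the argument is a direct consequence of continuity and the Alexandroff description of $\overline{\{\lambda\}}$. The only point deserving care is keeping the orientation of the Alexandroff topology consistent---closed sets being down-sets, so that $\overline{\{\lambda\}}$ is the principal down-set of $\lambda$---but this convention is already pinned down in the preceding proof and can simply be cited. It is worth remarking that openness of $\pi$ plays no role here, and that only the forward implication of \ref{Tamaki} is used; the reverse implication of \ref{Tamaki} is in fact a consequence of \ref{Andrade} alone, which is the phenomenon this proposition makes precise.
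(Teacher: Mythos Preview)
Your argument is correct and is essentially identical to the paper's: both use continuity to obtain $\overline{e_{\lambda}}\subset\pi^{-1}(\overline{\{\lambda\}})$, deduce $\mu\le\lambda$ from $e_{\mu}\cap\overline{e_{\lambda}}\neq\emptyset$, and then invoke condition~\ref{Tamaki}. Your closing remark that the proposition ``makes precise'' the derivability of one direction of~\ref{Tamaki} from~\ref{Andrade} slightly misdescribes the content of the statement, but this does not affect the proof.
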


\begin{proof}
 When $\pi$ is continuous, we have
 \[
  \overline{e_{\lambda}}=\overline{\pi^{-1}(\{\lambda\})} \subset
 \pi^{-1}(\overline{\{\lambda\}}).
 \]
 If $e_{\mu}\cap\overline{e_{\lambda}}\neq\emptyset$, then
 $e_{\mu}\cap\pi^{-1}(\overline{\{\lambda\}})\neq\emptyset$, or
 $\mu\in\overline{\{\lambda\}}$, or $\mu\le\lambda$. 
 By \ref{Tamaki}, this is equivalent to
 $e_{\mu}\subset\overline{e_{\lambda}}$. 
\end{proof}

\begin{corollary}
 \label{Tamaki_stratification_implies_normal}
 Any stratification in the sense of Definition
 \ref{definition:stratification} is normal in the sense of Definition
 2.6 of \cite{1609.04500}.
\end{corollary}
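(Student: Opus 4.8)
The plan is to read off the corollary as a direct concatenation of the two propositions just established, since by hypothesis a stratification in the sense of Definition \ref{definition:stratification} is an \emph{open continuous} map $\pi:X\to\Lambda$; that is, it satisfies conditions \ref{Andrade} and \ref{open} simultaneously. The work has therefore already been done, and what remains is purely to chain the implications in the correct order.

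First I would invoke Proposition \ref{Andrade+open=Tamaki}. Its hypothesis \ref{Andrade} holds, and under that hypothesis the proposition asserts the equivalence of \ref{open} and \ref{Tamaki}. Since our map satisfies \ref{open}, we conclude that it satisfies \ref{Tamaki}: namely $e_{\mu}\subset\overline{e_{\lambda}}$ if and only if $\mu\le\lambda$. Next I would feed this into Proposition \ref{Andrade+Tamaki->normal}, whose hypotheses are exactly \ref{Andrade} and \ref{Tamaki}; its conclusion is condition \ref{normal}. Thus every stratification in the sense of Definition \ref{definition:stratification} satisfies the frontier condition \ref{normal}.

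The last step is to identify condition \ref{normal} with the normality condition of Definition 2.6 of \cite{1609.04500}. As noted in the text preceding this corollary, condition \ref{normal} is precisely what is classically called \emph{normal} (the axiom of the frontier), so this identification is a matter of unwinding the cited definition rather than of any substantive argument.

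I do not expect a genuine obstacle here: the statement is a formal corollary, and the only point requiring any care is to ensure that both hypotheses \ref{Andrade} and \ref{Tamaki} are in hand \emph{before} applying Proposition \ref{Andrade+Tamaki->normal}, which is why the intermediate appeal to Proposition \ref{Andrade+open=Tamaki} is needed. If anything is subtle, it is confirming that the normality of Definition 2.6 in \cite{1609.04500} is stated for the ambient poset $\Lambda$ rather than only for the image $P(X)$; but since $P(X)$ is a full subposet of $\Lambda$ and the strata $e_{\mu}$ are empty for $\mu\notin P(X)$, the condition \ref{normal} as phrased above is insensitive to this distinction, and no additional verification is required.
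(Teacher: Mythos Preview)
Your proposal is correct and matches the paper's approach: the corollary is stated without proof immediately after Propositions \ref{Andrade+open=Tamaki} and \ref{Andrade+Tamaki->normal}, precisely because it follows by chaining them as you describe. The additional remarks you make about $P(X)$ versus $\Lambda$ are harmless but not needed.
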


The condition \ref{Andrade} obviously implies \ref{Hiro}.
For the converse, we have the following.

\begin{proposition}
 \label{Tamaki+normal+Hiro->Andrade}
 The conditions \ref{Tamaki}, \ref{normal}, and \ref{Hiro} imply
 \ref{Andrade}.  
\end{proposition}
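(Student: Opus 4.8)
The plan is to prove continuity of $\pi$ directly, by showing that $\pi^{-1}(C)$ is closed in $X$ for every closed subset $C\subset\Lambda$. The key is to recall how the Alexandroff topology looks: as already used in the proof of Proposition \ref{Andrade+open=Tamaki}, one has $\mu\in\overline{\{\lambda\}}$ if and only if $\mu\le\lambda$, so the closed subsets of $\Lambda$ are precisely the down-sets, and $\overline{\{\lambda\}}=\set{\mu\in\Lambda}{\mu\le\lambda}$. Thus the whole problem reduces to computing the closures $\overline{e_{\lambda}}$ of individual strata in $X$ and recognizing them as preimages under $\pi$.

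The first step is to establish the identity $\overline{e_{\lambda}}=\pi^{-1}(\overline{\{\lambda\}})=\bigcup_{\mu\le\lambda}e_{\mu}$ for every $\lambda\in P(X)$, using conditions \ref{Tamaki} and \ref{normal}. The inclusion $\bigcup_{\mu\le\lambda}e_{\mu}\subseteq\overline{e_{\lambda}}$ is immediate from \ref{Tamaki}, since $\mu\le\lambda$ forces $e_{\mu}\subseteq\overline{e_{\lambda}}$. For the reverse inclusion I would take any point $x\in\overline{e_{\lambda}}$ and let $e_{\nu}$ be the unique stratum containing it (the strata partition $X$); then $e_{\nu}\cap\overline{e_{\lambda}}\neq\emptyset$, so \ref{normal} gives $e_{\nu}\subseteq\overline{e_{\lambda}}$ and then \ref{Tamaki} gives $\nu\le\lambda$, whence $x\in e_{\nu}\subseteq\bigcup_{\mu\le\lambda}e_{\mu}$. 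This yields $\overline{e_{\lambda}}=\bigcup_{\mu\le\lambda}e_{\mu}=\pi^{-1}(\overline{\{\lambda\}})$.

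Granting this, the conclusion follows quickly. Let $C\subset\Lambda$ be closed, hence a down-set, so that $C=\bigcup_{\lambda\in C}\overline{\{\lambda\}}$. Taking preimages and applying the identity of the previous step,
\[
 \pi^{-1}(C)=\bigcup_{\lambda\in C}\pi^{-1}(\overline{\{\lambda\}})=\bigcup_{\lambda\in C}\overline{e_{\lambda}},
\]
which is closed in $X$ by condition \ref{Hiro} (empty strata, coming from $\lambda\notin P(X)$, contribute nothing). Hence $\pi^{-1}(C)$ is closed for every closed $C\subset\Lambda$, i.e.\ $\pi$ is continuous, which is condition \ref{Andrade}. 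I do not expect a serious obstacle here; the only real content is the identification $\overline{e_{\lambda}}=\pi^{-1}(\overline{\{\lambda\}})$, where one must combine \ref{normal} (to see that the closure of a stratum is a union of \emph{whole} strata) with \ref{Tamaki} (to pin down exactly which strata occur, namely those indexed by $\mu\le\lambda$). Condition \ref{Hiro} is then precisely the global hypothesis needed to upgrade this stratum-by-stratum description into closedness of arbitrary preimages.
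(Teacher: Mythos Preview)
Your argument is correct and follows essentially the same approach as the paper: both proofs combine \ref{Tamaki} and \ref{normal} to obtain $\overline{e_{\lambda}}=\bigcup_{\mu\le\lambda}e_{\mu}$, use that a closed $C\subset\Lambda$ is a down-set to identify $\pi^{-1}(C)=\bigcup_{\lambda\in C}\overline{e_{\lambda}}$, and then invoke \ref{Hiro}. The only difference is organizational---you isolate the identity $\overline{e_{\lambda}}=\pi^{-1}(\overline{\{\lambda\}})$ first, whereas the paper derives it inline after setting up a chain of inclusions involving $\overline{\pi^{-1}(C)}$.
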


\begin{proof}
 Suppose $C\subset\Lambda$ is closed. Then we have
 \[
  \overline{\pi^{-1}(C)} \supset \bigcup_{\lambda\in C}
 \overline{\pi^{-1}(\lambda)} \supset \bigcup_{\lambda\in C}
 \pi^{-1}(\lambda) = \pi^{-1}(C).
 \]
 By the condition \ref{Hiro}, we obtain
 \[
 \displaystyle\overline{\pi^{-1}(C)}=\bigcup_{\lambda\in
 C}\overline{\pi^{-1}(\lambda)} = \bigcup_{\lambda\in C}
 \overline{e_{\lambda}}. 
 \]
 On the other hand, the conditions \ref{Tamaki} and \ref{normal} allow us to
 write each $\overline{e_{\lambda}}$ as
 $\bigcup_{\mu\le\lambda} e_{\mu}$. Since $C$ is closed, $\lambda\in C$
 and $\mu\le\lambda$ imply that $\mu\in C$ and we have
 \[
 \bigcup_{\lambda\in C}\overline{e_{\lambda}} = \bigcup_{\lambda\in
 C}\bigcup_{\mu\le\lambda} e_{\mu} = \bigcup_{\lambda\in C}
 e_{\lambda} = \pi^{-1}(C)
 \]
 and $\pi^{-1}(C)$ is shown to be closed.
\end{proof}

For a cell complex $X$, define $P(X)$ to be the set of cells of
$X$. Define a partial order $\le$ on $P(X)$ by saying that
$e\le e'$ if and only if $e\subset \overline{e'}$.
We have a map $\pi:X\to P(X)$ which assigns the
unique cell $\pi(x)$ containing $x$ to $x\in X$.
 
In general, this is not a stratification in the sense of Definition
\ref{definition:stratification}. 
Proposition \ref{Tamaki+normal+Hiro->Andrade} implies, however, that 
$\pi$ is a stratification if $X$ is normal.

\begin{corollary}
 \label{corollary:cell_decomposition}
 For a normal CW complex $X$, the map $X\to P(X)$ defined by the cell
 decomposition is open and
 continuous, hence is a stratification in the sense of Definition
 \ref{definition:stratification}.  
\end{corollary}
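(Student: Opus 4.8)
The plan is to obtain openness and continuity of $\pi\colon X\to P(X)$ not by direct verification but through the implications already established between conditions \ref{Andrade}--\ref{Hiro}. Specifically, I would check conditions \ref{Tamaki}, \ref{normal}, and \ref{Hiro} for $\pi$, apply Proposition \ref{Tamaki+normal+Hiro->Andrade} to deduce continuity (\ref{Andrade}), and then invoke Proposition \ref{Andrade+open=Tamaki} to upgrade continuity together with \ref{Tamaki} to openness (\ref{open}).

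Two of these three inputs come almost for free. Condition \ref{Tamaki} is tautological, since the partial order on $P(X)$ was defined by $e\le e'\iff e\subset\overline{e'}$, which is exactly the content of \ref{Tamaki}. Condition \ref{normal} is precisely the hypothesis that $X$ is normal.

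The hard part will be condition \ref{Hiro}, and I would prove it using the weak topology of the CW complex. Closed subsets of $P(X)$ in the Alexandroff topology are the down-closed sets, since the proof of Proposition \ref{Andrade+open=Tamaki} records that $\mu\in\overline{\{\lambda\}}$ iff $\mu\le\lambda$. So let $C\subset P(X)$ be down-closed and put $A=\bigcup_{\lambda\in C}\overline{e_{\lambda}}$; I must show $A\cap\overline{e_{\nu}}$ is closed in each closed cell $\overline{e_{\nu}}$. By closure-finiteness $\overline{e_{\nu}}$ meets only finitely many cells, and \ref{normal} together with \ref{Tamaki} gives the finite decomposition $\overline{e_{\nu}}=\bigcup_{\mu\le\nu}e_{\mu}$. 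A point $x\in e_{\mu}\subset\overline{e_{\nu}}$ lies in some $\overline{e_{\lambda}}$ with $\lambda\in C$ iff $\mu\le\lambda$ for some $\lambda\in C$ (again by \ref{normal} and \ref{Tamaki}), which, as $C$ is down-closed, happens iff $\mu\in C$. Hence $A\cap\overline{e_{\nu}}=\bigcup_{\mu\le\nu,\ \mu\in C}e_{\mu}$, and one more use of down-closedness rewrites this as the finite union $\bigcup_{\mu\le\nu,\ \mu\in C}\overline{e_{\mu}}$ of closed cells, which is closed. The weak topology then forces $A$ to be closed, giving \ref{Hiro}.

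Once \ref{Andrade} and \ref{open} are secured, it remains to confirm the remaining clauses of Definition \ref{definition:stratification}, namely that each stratum $\pi^{-1}(\lambda)=e_{\lambda}$ is connected and locally closed. Connectedness is clear, as $e_{\lambda}$ is the continuous image of an open disk under the characteristic map. For local closedness I would observe that $\overline{e_{\lambda}}\setminus e_{\lambda}$ is the image of the bounding sphere under the characteristic map, hence compact and so closed in the Hausdorff space $X$; thus $e_{\lambda}$ is open in its closure. The only genuinely technical step is the weak-topology argument for \ref{Hiro}; the rest is either formal or standard CW theory.
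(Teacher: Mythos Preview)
Your proposal is correct and follows exactly the route the paper intends: the corollary is stated immediately after Proposition \ref{Tamaki+normal+Hiro->Andrade} with the remark that this proposition ``implies, however, that $\pi$ is a stratification if $X$ is normal,'' so the paper's implicit proof is precisely to check \ref{Tamaki}, \ref{normal}, \ref{Hiro} and then feed them through Propositions \ref{Tamaki+normal+Hiro->Andrade} and \ref{Andrade+open=Tamaki}. You have supplied the details the paper omits, in particular the weak-topology argument for \ref{Hiro} and the verification of connectedness and local closedness of strata, and these details are correct.
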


In particular, the geometric realization of a simplicial complex is a
stratified space.

\begin{example}
 \label{example:simplicial_complex}
 Let $K$ be an ordered simplicial complex. Then the geometric
 realization $\|K\|$ has a structure of regular CW complex whose cells
 are indexed by the simplices of $K$. Since regular CW complexes are
 normal, by Corollary \ref{corollary:cell_decomposition}, the cell
 decomposition is a stratification. 
 This stratification can be generalized to $\Delta$-spaces, i.e.\
 simplicial spaces without degeneracies.
 For a $\Delta$-space $X$, the \emph{simplicial stratification}
 \[
 \pi_{X} : \|X\| =
 \quotient{\coprod_{n\ge 0}X_{n}\times\DDelta^{n}}{\sim} 
 \rarrow{} \coprod_{n\ge 0}  X_{n}
 \]
 is defined by $\pi_{X}([x,\bm{t}])=x$ when $\bm{t}\in\inte\DDelta^n$,
 where the topology of each $X_{n}$ is forgotten and the partial order
 on $\coprod_{n\ge 0}X_{n}$ is defined by
 \[
 x\le y \Longleftrightarrow \exists u\in \Delta_{\inj}([m],[n]) \text{
 such that } X(u)(y)=x
 \]
 for $x\in X_{n}$, $y\in X_{m}$.
 See Example 3.16 of \cite{1609.04500}, for details.
\end{example}

The definition of morphisms between stratified spaces should be obvious.

\begin{definition}
 A \emph{morphism} of stratified spaces from $\pi_{X}:X\to P(X)$ to
 $\pi_{Y}:Y\to P(Y)$ is a pair of a continuous map $f:X\to Y$ and a
 morphism of posets $P(f): P(X)\to P(Y)$ making the obvious diagram
 commutative. 

 A morphism $f:X\to Y$  of stratified spaces is called \emph{strict} if 
 $f(e_{\lambda})=e_{P(f)(\lambda)}$. 
\end{definition}

As is the case of cell complexes, the CW condition plays an essential
role when we study topological and homotopy-theoretic properties.

\begin{definition}
 \label{definition:CW}
 A stratification $\pi$ on a topological space $X$ is said to be
 \emph{CW} if it satisfies the following conditions:
 \begin{enumerate}
  \item (Closure Finite) For each stratum $e_{\lambda}$, the boundary
	$\partial e_{\lambda}$ is covered by a finite number of strata.
  \item (Weak Topology) $X$ has the weak topology determined by the
	covering $\{\overline{e_{\lambda}}\}_{\lambda\in P(X)}$.
 \end{enumerate}
\end{definition}

\subsection{Joins and Cones}
\label{join}

We need to make use of the join of stratified spaces in order to define
stellar structures.

Recall that, for topological spaces $X$ and $Y$, the \emph{join}
$X\star Y$ is defined to be the quotient space
 \[
  X\star Y = X\times [0,1]\times Y/_{\sim}
 \]
 where the equivalence relation $\sim$ is generated by the following two
 types of relations:
 \begin{enumerate}
  \item $(x,0,y)\sim (x,0,y')$ for all $x\in X$ and $y,y'\in Y$.
  \item $(x,1,y)\sim (x',1,y)$ for all $x,x'\in X$ and $y\in Y$.
 \end{enumerate}
 The class represented by $(x,t,y) \in X\times[0,1]\times Y$ is denoted
 by $(1-t)x+ty$. 

\begin{definition}
 \label{definition:stratification_on_join}
 When $X$ and $Y$ are stratified by maps $\pi_{X}:X\to P(X)$ and
 $\pi_{Y}:Y\to P(Y)$, define
 \[
  \pi_{X}\star \pi_{Y} : X\star Y \rarrow{} P(X) \amalg P(X)\times P(Y)
 \amalg P(Y) 
 \]
 by
 \[
 (\pi_{X}\star \pi_{Y})((1-t)x+ty) =
 \begin{cases}
  \pi_{X}(x), & t=0 \\
  (\pi_{X}(x),\pi_{Y}(y)), & 0<t<1 \\
  \pi_{Y}(y), & t=1.
 \end{cases}
 \]
\end{definition}

\begin{lemma}
 For stratified spaces $\pi_{X}:X\to P(X)$ and
 $\pi_{Y}:Y\to P(Y)$, define a partial order on
 $P(X) \amalg P(X)\times P(Y) \amalg P(Y)$ by the following rule.
 \begin{enumerate}
  \item $P(X)$ and $P(Y)$ are full subposets.
  \item $P(X)\times P(Y)$ with the product partial order is a full
	subposet. 
  \item $\lambda < (\lambda,\mu)$ for all $\lambda\in P(X)$ and
	$\mu\in	P(Y)$. 
  \item $\mu < (\lambda,\mu)$ for all $\lambda\in P(X)$ and
	$\mu\in	P(Y)$. 
 \end{enumerate}
 Let us denote this poset as $P(X)\star P(Y)$.
 Then the map $\pi_{X}\star \pi_{Y}:X\star Y\to P(X)\star P(Y)$ defines
 a stratification on the join $X\star Y$.
\end{lemma}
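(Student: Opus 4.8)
The plan is to verify the three requirements of Definition \ref{definition:stratification} for the map $\pi:=\pi_X\star\pi_Y$: that it is continuous, open, and has connected, locally closed fibers (surjectivity onto $P(X)\star P(Y)$ is immediate, since $\pi_X,\pi_Y$ are surjective and any pair $(\lambda,\mu)$ is the image of an interior point $(1-t)x+ty$ with $0<t<1$). I record at the outset that local closedness of the fibers will need no separate argument: in a poset with its Alexandroff topology every singleton $\{s\}$ equals the intersection of the open up-set $\{r\ge s\}$ with the closed down-set $\{r\le s\}$, hence is locally closed, so each $\pi^{-1}(s)$ is locally closed as soon as $\pi$ is continuous. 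Connectedness is equally quick: $\pi^{-1}(\lambda)$ for $\lambda\in P(X)$ is the copy of $e_\lambda$ at $t=0$, while $\pi^{-1}(\lambda,\mu)$ is the image under the quotient map $q:X\times[0,1]\times Y\to X\star Y$ of the connected set $e_\lambda\times(0,1)\times e_\mu$. Thus the real content lies in continuity and openness, and in both the behaviour at the endpoints $t=0,1$ is the crux.

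For continuity I would work through $q$ and show that $\tilde\pi:=\pi\circ q$ pulls every open set back to an open set of $X\times[0,1]\times Y$. An open set $V\subseteq P(X)\star P(Y)$ is precisely an up-set; writing $V_X=V\cap P(X)$, $V_{XY}=V\cap(P(X)\times P(Y))$, and $V_Y=V\cap P(Y)$, the defining relations $\lambda<(\lambda,\mu)$ and $\mu<(\lambda,\mu)$ of Definition \ref{definition:stratification_on_join} force the compatibility conditions $V_X\times P(Y)\subseteq V_{XY}$ and $P(X)\times V_Y\subseteq V_{XY}$. Given a point of $\tilde\pi^{-1}(V)$ I would exhibit an explicit open box inside it. For an interior point one takes $\pi_X^{-1}(U_{\lambda_0})\times I\times\pi_Y^{-1}(U_{\mu_0})$, where $I\subset(0,1)$ and $U_{\lambda_0}=\{\lambda\ge\lambda_0\}$, $U_{\mu_0}=\{\mu\ge\mu_0\}$ are the minimal open sets; this lands in $V_{XY}$ because $V_{XY}$ is an up-set. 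For a point with $t=0$ one takes $\pi_X^{-1}(V_X)\times[0,\varepsilon)\times Y$, and here the compatibility condition $V_X\times P(Y)\subseteq V_{XY}$ is exactly what guarantees that the slice $0<t<\varepsilon$ still maps into $V$; the case $t=1$ is symmetric. This establishes continuity, and hence local closedness of the fibers.

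For openness I would use the local criterion that $\pi$ is open if and only if for every $p$, every open $U\ni p$, and every $q\ge\pi(p)$ there exists $p'\in U$ with $\pi(p')=q$, since then $\pi(U)=\bigcup_{p\in U}\{r\ge\pi(p)\}$ is an up-set. This reduces the problem to the openness of $\pi_X$ and $\pi_Y$ in the same reachability form, which is a direct unwinding of the hypothesis that $\pi_X,\pi_Y$ are open maps. The delicate case, and the main obstacle, is a point $p$ at $t=0$, where $\pi(p)=\lambda_0\in P(X)$: the elements $q\ge\lambda_0$ comprise not only the $\lambda\ge\lambda_0$ of $P(X)$ but also all pairs $(\lambda,\mu)$ with $\lambda\ge\lambda_0$ and $\mu$ \emph{completely arbitrary}, whereas a small product neighbourhood only sees $y$ near $y_0$. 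The resolution is that $U$ is open in $X\star Y$, so $\tilde U=q^{-1}(U)$ is saturated and therefore contains the whole fibre $\{x_0\}\times\{0\}\times Y$; hence for any prescribed $\mu$ one may choose $y_\mu\in\pi_Y^{-1}(\mu)$, take a product neighbourhood of $(x_0,0,y_\mu)$ inside $\tilde U$, use openness of $\pi_X$ to produce $x'$ with $\pi_X(x')=\lambda$, and push $t$ into $(0,1)$ to obtain a point mapping to $(\lambda,\mu)$. The interior case and the $t=1$ case follow by a direct application of the openness of $\pi_X$ and $\pi_Y$. Combining continuity, openness, and the fibre properties yields the claim.
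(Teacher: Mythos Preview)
Your argument is correct, but it takes a different route from the paper's. The paper does not verify continuity and openness of $\pi_X\star\pi_Y$ directly; instead it checks the three auxiliary conditions \ref{Tamaki}, \ref{normal}, \ref{Hiro} of \S\ref{stratification} (the frontier condition, normality, and closedness of $\bigcup_{\nu\in C}\overline{e_\nu}$ for closed $C$), and then invokes Propositions \ref{Tamaki+normal+Hiro->Andrade} and \ref{Andrade+open=Tamaki} to obtain continuity and openness. The main computation there is pulling $\bigcup_{\nu\in C}\overline{e_\nu}$ back along the quotient map $X\times[0,1]\times Y\to X\star Y$ and recognizing it as a closed product set, using $\overline{e^X_\lambda\star e^Y_\mu}=\overline{e^X_\lambda}\star\overline{e^Y_\mu}$.

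Your direct approach is more elementary in that it avoids the detour through the five equivalent conditions; the decisive observation for openness at the endpoints---that $q^{-1}(U)$, being saturated, contains the whole fibre $\{x_0\}\times\{0\}\times Y$, so that one may first choose $y_\mu$ in the desired stratum of $Y$ before taking a box neighbourhood---is exactly what makes the argument go through, and it is a point the paper's route sidesteps entirely. Conversely, the paper's approach illustrates why the propositions of \S\ref{stratification} were set up: once they are in hand, the verification reduces to the easily checked closure identity for joins and a routine computation with closed down-sets. Either proof is complete; yours is self-contained, while the paper's exploits the earlier machinery.
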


\begin{proof}
 Let us denote the strata in $X$ and $Y$ by $e^{X}_{\lambda}$ and
 $e^{Y}_{\mu}$ for $\lambda\in P(X)$ and $\mu\in P(Y)$, respectively. 
 Regard $X$ and $Y$ as subspaces of $X\star Y$. Then we have a
 decomposition
 \[
  X\star Y = \coprod_{\lambda\in P(X)} e^{X}_{\lambda} \amalg
 \coprod_{(\lambda,\mu) \in P(X)\times P(Y)} e^{X\star
 Y}_{(\lambda,\mu)} \amalg \coprod_{\mu\in P(Y)} e^{Y}_{\mu}, 
 \]
 where
 \[
  e^{X\star Y}_{(\lambda,\mu)} = (\pi_{X}\star
 \pi_{Y})^{-1}(\lambda,\mu) = e^{X}_{\lambda}\times (0,1)\times
 e^{Y}_{\mu}. 
 \]
 It remains to show that $\pi_{X}\star\pi_{Y}$ is open and continuous.
 By Proposition \ref{Tamaki+normal+Hiro->Andrade} and Proposition
 \ref{Andrade+open=Tamaki}, it suffices to show that
 $\pi_{X}\star\pi_{Y}$ satisfies the conditions \ref{Tamaki},
 \ref{normal}, and \ref{Hiro} in \S\ref{stratification}.

 By Proposition \ref{Andrade+open=Tamaki} and Proposition
 \ref{Andrade+Tamaki->normal}, $\pi_{X}$  and $\pi_{Y}$ satisfy
 \ref{Tamaki}, \ref{normal}, and \ref{Hiro}.
 By the definition of the partial order on $P(X)\star P(Y)$,
 $\pi_{X}\star\pi_{Y}$ satisfies \ref{Tamaki}.
 The condition \ref{normal} follows from
 the fact that
 $\overline{e^{X}_{\lambda}\star e^{Y}_{\mu}}=\overline{e^{X}_{\lambda}}\star\overline{e^{Y}_{\mu}}$. 
 Let $C$ be a closed subset of $P(X)\star P(Y)$. It decomposes as
 $C=C_{X}\cup C_{X,Y}\cup C_{Y}$ with $C_{X}\subset P(X)$,
 $C(Y)\subset P(Y)$, and $C_{X,Y}\subset P(X)\times P(Y)$. We have
\begin{align*}
 \bigcup_{\nu\in C} \overline{(\pi_{X}\star \pi_{Y})^{-1}(\nu)} & =
 \bigcup_{\lambda\in C_{X}} \overline{e^{X}_{\lambda}} \cup
 \bigcup_{(\lambda,\mu)\in C_{X,Y}}
 \overline{e^{X}_{\lambda}\times (0,1)\times e^{Y}_{\mu}} \cup
 \bigcup_{\mu\in C_{Y}} \overline{e^{Y}_{\mu}} \\
 & = \bigcup_{\lambda\in C_{X}} \overline{e^{X}_{\lambda}} \cup
 \bigcup_{(\lambda,\mu)\in C_{X,Y}}
 \overline{e^{X}_{\lambda}}\star \overline{e^{Y}_{\mu}} \cup
 \bigcup_{\mu\in C_{Y}} \overline{e^{Y}_{\mu}}.
\end{align*}
 Let $p_{X\star Y}: X\times[0,1]\times Y\to X\star Y$ be the defining
 quotient map. Then we have
 \begin{align*}
 & p_{X\star Y}^{-1}\left(\bigcup_{\lambda\in C_{X}}
  \overline{e^{X}_{\lambda}} \cup \bigcup_{(\lambda,\mu)\in C_{X,Y}}
  \overline{e^{X}_{\lambda}}\star \overline{e^{Y}_{\mu}}
  \cup
  \bigcup_{\mu\in C_{Y}} \overline{e^{Y}_{\mu}}\right) \\
  & = \bigcup_{\lambda\in C_{X}}
  p_{X\star Y}^{-1}\left(\overline{e^{X}_{\lambda}}\right) \cup
  \bigcup_{(\lambda,\mu)\in C_{X,Y}}
  p_{X\star Y}^{-1}\left(\overline{e^{X}_{\lambda}}\star
  \overline{e^{Y}_{\mu}}\right) \cup
  \bigcup_{\mu\in C_{Y}}
  p_{X\star Y}^{-1}\left(\overline{e^{Y}_{\mu}}\right) \\ 
  & = \bigcup_{\lambda\in C_{X}}
  \overline{e^{X}_{\lambda}}\times\{0\}\times Y 
  \cup \bigcup_{(\lambda,\mu)\in C_{X,Y}}  
  \overline{e^{X}_{\lambda}}\times[0,1]\times\overline{e^{Y}_{\mu}}
  \cup
  \bigcup_{\mu\in C_{Y}}
  X\times\{1\}\times\overline{e^{Y}_{\mu}}.
 \end{align*}
 Since $C_{X}$, $C_{X,Y}$, and $C_{Y}$ are closed in $P(X)$,
 $P(X)\times P(Y)$, and $P(Y)$, respectively,
 this is closed in $X\times [0,1]\times Y$.
 Hence $\bigcup_{\nu\in C} \overline{(\pi_{X}\star \pi_{Y})^{-1}(\nu)}$
 is closed in $X\star Y$.
\end{proof}

In particular, we have the (closed) cone construction on stratified
spaces. 

\begin{definition}
 \label{defn:cone}
 For a stratified space $\pi_{X}:X\to\Lambda$, the \emph{cone on $X$}
 is defined by
 \[
  \cone(X)=\{\ast\}\star X.
 \]
 The complement
 \[
  \cone(X)\setminus X\times\{1\} = \set{(1-t)\ast + tx\in \cone(X)}{0\le t<1}
 \]
 is called the \emph{open cone} and is denoted by $\cone^{\circ}(X)$. 
\end{definition}

\begin{remark}
 Make a set $\{b,i\}$ into a poset by $b<i$. Then the face poset of
 $\cone(X)$ can be identified with $\Lambda\times\{b,i\}\amalg\{\ast\}$.
 The element $\ast$ is the unique minimal element in
 $\Lambda\times\{i\}\amalg\{\ast\}$ and is unrelated to elements in
 $\Lambda\times\{b\}$.
 
 With this identification, the stratification on $\cone(X)$ 
 \[
  \pi_{\cone(X)} : \cone(X) \rarrow{} \Lambda\times\{b,i\}\amalg\{\ast\}
 \]
 is given by
 \[
 \pi_{\cone(X)}((1-t)\ast + tx) =
 \begin{cases}
  \ast, & t=0 \\
  (\pi_{X}(x),i), & 0<t<1 \\
  (\pi_{X}(x),b), & t=1.
 \end{cases}
 \]

 The face poset of the open cone $\cone^{\circ}(X)$, which is
 $\Lambda\times\{i\}\amalg \{\ast\}$, is denoted by
 $\Lambda^{\lhd}$ in Lurie's book \cite{LurieHigherAlgebra}.
\end{remark}

\subsection{Stellar Stratified Spaces}
\label{SSS_basics}

Recall that a characteristic map of a cell $e$ in a cell complex $X$ is
a surjective continuous map $\varphi: D^{\dim e}\to \overline{e}$ which
is a homeomorphism onto $e$ when restricted to the interior of $D^{\dim e}$.
The notion of stellar structure was introduced in \cite{1609.04500} by
replacing disks by stellar cells, in which a stellar cell was defined as a
subspace of a closed disk $D^{N}$ obtained by taking the join of the
center and a subspace $S$ of the boundary. In other words, it is a cone
$\cone(S)$ on $S$ embedded in $D^{N}$. 
Here we do not require this embeddability condition.

\begin{definition}
 Let $S$ be a stratified space. A subset $D\subset \cone(S)$ is called
 an \emph{aster} if, for any $x\in D$ with $x=(1-t)*+ty$,
 $(1-t')*+t'y\in D$ for any $0\le t'\le t$.
 The subset $S\cap D$ is called the boundary of $D$ and is denoted by
 $\partial D$. The complement $D\setminus\partial D$ is called the
 \emph{interior} and is denoted by $\inte(D)$.
 An aster $D$ is called \emph{thin} if $D=\{*\}\star\partial D$. 
\end{definition}

\begin{definition}
 Let $\pi : X\to \Lambda$ be a stratified space. A \emph{stellar
 structure} on a stratum $e_{\lambda}$ is a morphism of stratified
 spaces
 $\varphi_{\lambda} : D_{\lambda} \rarrow{} \overline{e_{\lambda}}$,
 for an aster $D_{\lambda}\subset \cone(S_{\lambda})$, which is a
 quotient map and whose restriction 
 $\varphi_{\lambda}|_{\inte(D_{\lambda)}}: \inte(D_{\lambda})\to e_{\lambda}$  
 is a homeomorphism.
 If $S_{\lambda}$ is a stratified subspace of a stratification of a
 sphere $S^{n-1}$ and $\inte(D_{\lambda})=\inte(D^{n})$, then the
 stellar structure is called a \emph{cell structure}.

 A \emph{stellar stratified space} is a triple $(X,\pi_{X},\Phi_{X})$ of
 a topological space $X$, a stratification $\pi_{X}:X\to P(X)$, and a
 collection of stellar structures
 $\Phi_{X}=\{\varphi_{\lambda}\}_{\lambda\in P(X)}$ on strata such that,
 for each stratum $e_{\lambda}$,
 $\partial e_{\lambda}=\overline{e_{\lambda}}\setminus e_{\lambda}$ is
 covered by strata indexed by $P(X)_{<\lambda}=\set{\mu\in P(X)}{\mu<\lambda}$.
 When all stellar structures are cell structures, it is called a
 \emph{cellular stratified space}.
 
 A stratum $e_{\lambda}$ in a stellar stratified space is called
 \emph{thin} if the domain $D_{\lambda}$ of the stellar structure
 $\varphi_{\lambda}: D_{\lambda}\to \overline{e_{\lambda}}$ is a thin
 aster.
 A stellar stratified space is called a \emph{stellar complex} if all
 strata are thin. 
\end{definition}

\begin{remark}
 We do not require the restriction $\varphi|_{\inte(D_{\lambda})}$
 to be an isomorphism of stratified spaces.
\end{remark}

\begin{example}
 Consider a cell $e_{\lambda}$ in a normal CW complex $X$. The
 characteristic map
 $\varphi_{\lambda}: D^{\dim e_{\lambda}}\to\overline{e_{\lambda}}$
 defines a stellar structure on $e_{\lambda}$ by setting
 $S_{\lambda}=\partial D^{\dim e_{\lambda}}=S^{\dim e_{\lambda}-1}$.
 The stratification on $S_{\lambda}$ is defined by taking connected
 components of strata obtained by pulling back the
 stratification on $\partial e_{\lambda}$.
 Thus any normal CW complex can be regarded as a stellar stratified
 space. 
\end{example}

\begin{example}
 Let $Y$ be the geometric realization of a $1$-dimensional simplicial
 complex of the shape of Y as is shown in Figure \ref{figure:Y}. 
 \begin{figure}[ht]
  \begin{center}
   \begin{tikzpicture}
    \draw (-0.5,0.87) -- (0,0);
    \draw (0.5,0.87) -- (0,0);
    \draw (0,-1) -- (0,0);
    \draw [fill] (-0.5,0.87) circle (2pt);
    \draw (-0.7,1.3) node {$e^{0}_{1}$};
    \draw [fill] (0.5,0.87) circle (2pt);
    \draw (0.7,1.3) node {$e^{0}_{2}$};
    \draw [fill] (0,0) circle (2pt);
    \draw [fill] (0,-1) circle (2pt);
    \draw (0,-1.4) node {$e^{0}_{0}$};
   \end{tikzpicture}
  \end{center}
  \caption{A stellar stratification on ``Y''}
  \label{figure:Y}
 \end{figure}
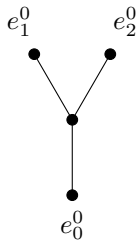
 Let $e^{0}_{0}, e^{0}_{1}, e^{0}_{2}$ be the three outer vertices.
 Denote the complement $Y\setminus \{e^{0}_{0}, e^{0}_{1}, e^{0}_{2}\}$ 
 by $e^1$. Then the decomposition 
 \[
  Y = e^{0}_{0} \cup e^{0}_{1} \cup e^{0}_{2} \cup e^{1}
 \]
 is a stellar stratification. The stellar structure on $e^{1}$ is given
 by the identity map $Y\to \overline{e^{1}}$.
\end{example}

\begin{definition}
 Let $(X,\pi_X,\Phi_X)$ and $(Y,\pi_Y,\Phi_Y)$ be stellar stratified
 spaces. A \emph{morphism of stellar stratified spaces}
 from $(X,\pi_X,\Phi_X)$ to $(Y,\pi_Y,\Phi_Y)$ consists of 
 \begin{itemize}
  \item a morphism $\bm{f} : (X,\pi_X) \to (Y,\pi_Y)$ of stratified
	spaces, and
  \item a family of maps
	$f_{\lambda}:D^{X}_{\lambda}\to D^{Y}_{P(f)(\lambda)}$
	indexed by $P(X)$ making the diagrams 
	\[
	 \begin{diagram}
	  \node{X} \arrow{e,t}{f} \node{Y} \\
	  \node{D^{X}_{\lambda}} \arrow{n,l}{\varphi_{\lambda}}
	  \arrow{e,b}{f_{\lambda}}
	  \node{D^{Y}_{P(f)(\lambda)}} 
	  \arrow{n,r}{\psi_{P(f)(\lambda)}} 
	 \end{diagram}
	\]
	commutative, where $\varphi_{\lambda}$ and
	$\psi_{\underline{f}(\lambda)}$ are stellar structures in $X$
	and $Y$, respectively. 
 \end{itemize}
 The category of stellar stratified spaces is denoted by
 $\SSS$. 
\end{definition}

The aim of \cite{1609.04500} was to find a structure on a stellar
stratified space from which the homotopy type can be recovered.
The notion of cylindrical normality was introduced for
this purpose. 
Let us recall the definition.

\begin{definition}
 \label{definition:cylindrical_normality}
 Let $\pi:X\to P(X)$ be a normal stellar stratified space whose stellar 
 structure is given by
 $\{\varphi_{\lambda}:D_{\lambda}\to \overline{e_{\lambda}}\}$.
 A \emph{cylindrical structure} on this stellar stratified space
 consists of
 \begin{itemize}
   \item a space $P_{\mu,\lambda}$ and a strict morphism of
	stratified spaces 
	\[
	 b_{\mu,\lambda} : P_{\mu,\lambda}\times D_{\mu}
	 \rarrow{}
	 \partial D_{\lambda}
	\]
	 for each pair of strata
	 $e_{\mu} \subset \partial e_{\lambda}$,
	 where each $P_{\mu,\lambda}$ is regarded as a stratified space
	 with a single stratum,	 and
  \item a map
	\[
	 c_{\lambda_0,\lambda_1,\lambda_2} :
	 P_{\lambda_1,\lambda_2}\times P_{\lambda_0,\lambda_1} 
	 \longrightarrow P_{\lambda_0,\lambda_2}
	\]
	 for each sequence $\overline{e_{\lambda_0}} \subset
	\overline{e_{\lambda_1}} \subset \overline{e_{\lambda_2}}$
 \end{itemize}
 satisfying the following conditions:
 \begin{enumerate}
  \item The restriction of $b_{\mu,\lambda}$ to
	$P_{\mu,\lambda}\times \inte(D_{\mu})$ is a homeomorphism
	onto its image. 
  \item The following three types of diagrams are commutative. 
	\[
	 \begin{diagram}
	  \node{D_{\lambda}} \arrow{e,t}{\varphi_{\lambda}}
	  \node{X} \\
	  \node{P_{\mu,\lambda}\times D_{\mu}}
	  \arrow{n,l}{b_{\mu,\lambda}} 
	  \arrow{e,t}{\pr_2} 
	  \node{D_{\mu}.} \arrow{n,r}{\varphi_{\mu}}
	 \end{diagram}
	\]
	\[
	 \begin{diagram}
	  \node{P_{\lambda_1,\lambda_2}\times
	  P_{\lambda_0,\lambda_1}\times D_{\lambda_0}}
	  \arrow{s,l}{c_{\lambda_0,\lambda_1,\lambda_2}\times 1} 
	  \arrow{e,t}{1\times b_{\lambda_0,\lambda_1}}
	  \node{P_{\lambda_1,\lambda_2}\times D_{\lambda_1}}
	  \arrow{s,r}{b_{\lambda_1,\lambda_2}} \\ 
	  \node{P_{\lambda_0,\lambda_2}\times D_{\lambda_0}}
	  \arrow{e,b}{b_{\lambda_0,\lambda_2}} \node{D_{\lambda_2}} 
	 \end{diagram}
	\]
	\[
	 \begin{diagram}
	  \node{P_{\lambda_2,\lambda_3}\times
	  P_{\lambda_1,\lambda_2}\times P_{\lambda_0,\lambda_1}}
	  \arrow{e,t}{c_{\lambda_{1},\lambda_{2},\lambda_{3}}\times 1}
	  \arrow{s,l}{1\times c_{\lambda_{0},\lambda_{1},\lambda_{2}}}  
	  \node{P_{\lambda_1,\lambda_3}\times P_{\lambda_0,\lambda_1}}
	  \arrow{s,r}{c_{\lambda_{0},\lambda_{1},\lambda_{3}}} \\ 
	  \node{P_{\lambda_2,\lambda_3}\times P_{\lambda_0,\lambda_2}}
	  \arrow{e,b}{c_{\lambda_{0},\lambda_{2},\lambda_{3}}} 
	  \node{P_{\lambda_0,\lambda_3}.} 
	 \end{diagram}
	\]
  \item We have 
	\[
	 D_{\lambda} = \bigcup_{e_{\mu}\subset \partial
	e_{\lambda}}  b_{\mu,\lambda}(P_{\mu,\lambda}\times
	\inte(D_{\mu})) 
	\]
	as a stratified space.
 \end{enumerate}

 The space $P_{\mu,\lambda}$ is called the \emph{parameter space} for the
 inclusion $e_{\mu}\subset \overline{e_{\lambda}}$. When $\mu=\lambda$,
 we define $P_{\lambda,\lambda}$ to be a single point.
 A stellar stratified space equipped with a cylindrical
 structure is 
 called a \emph{cylindrically normal stellar stratified space} (CNSSS,
 for short).
\end{definition}

\begin{definition}
 \label{definition:cylindrical_face_category}
 For a CNSSS $X$, define a category $C(X)$ as follows.
 Objects are strata of $X$.
 For each pair $e_{\mu} \subset \overline{e_{\lambda}}$, define
 \[
  C(X)(e_{\mu},e_{\lambda}) = P_{\mu,\lambda}.
 \]
 The composition of morphisms is given by
 \[
  c_{\lambda_0,\lambda_1,\lambda_2} : P_{\lambda_1,\lambda_2}\times
 P_{\lambda_0,\lambda_1} \longrightarrow P_{\lambda_0,\lambda_2}.
 \]
 The category $C(X)$ is called the \emph{face
 category} of $X$.
\end{definition}

The following fact is obvious from the definition.

\begin{lemma}
 \label{face_category_of_CNCSS}
 For any CNSSS $X$,
 its face category $C(X)$ is an acyclic topological category whose
 underlying poset is $P(X)$.
\end{lemma}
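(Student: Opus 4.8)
The plan is to verify directly the three clauses of Definition \ref{definition:acyclic_category}, after first confirming that $C(X)$ is a topological category, and then to read off the underlying poset. I would begin by assembling the data: the object space is the set of strata, which I take with the discrete topology (consistent with each identity space $P_{\lambda,\lambda}=\{*\}$ being a point), and the morphism space is the disjoint union
\[
 C(X)_{1} \;=\; \coprod_{e_{\mu}\subset\overline{e_{\lambda}}} P_{\mu,\lambda},
\]
with source and target maps constant on each summand. Composition is assembled from the maps $c_{\lambda_0,\lambda_1,\lambda_2}$, so it is continuous by hypothesis, and its associativity is precisely the commutativity of the third diagram in Definition \ref{definition:cylindrical_normality}. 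The identity on $e_{\lambda}$ is the unique point of $P_{\lambda,\lambda}$; the unit laws reduce to the assertion that $c_{\lambda_0,\lambda_0,\lambda_1}$ and $c_{\lambda_0,\lambda_1,\lambda_1}$ are the projections away from the one-point factor, which is forced once we adopt the convention $P_{\lambda,\lambda}=\{*\}$. This exhibits $C(X)$ as a topological category.

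For acyclicity I would use that $C(X)(e_{\mu},e_{\lambda})=P_{\mu,\lambda}$ is populated precisely when $e_{\mu}\subset\overline{e_{\lambda}}$, i.e.\ when $\mu\le\lambda$ in $P(X)$. Clause (1) then follows from antisymmetry of the poset $P(X)$: for distinct $\mu,\lambda$ we cannot have both $\mu\le\lambda$ and $\lambda\le\mu$, so at most one of $C(X)(e_{\mu},e_{\lambda})$ and $C(X)(e_{\lambda},e_{\mu})$ is nonempty. Clause (2) is immediate since $C(X)(e_{\lambda},e_{\lambda})=P_{\lambda,\lambda}$ is a single point, the identity. Clause (3) follows from the disjoint-union description above: the identity morphisms form the clopen sub-coproduct $\coprod_{\lambda}P_{\lambda,\lambda}$, which is exactly (the image of) $C(X)_{0}$, so $C(X)_{1}=C(X)_{0}\amalg(C(X)_{1}\setminus C(X)_{0})$ as spaces.

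It then remains to identify the induced poset. By Definition \ref{definition:acyclic_category} the relation on objects is $\mu\le\lambda$ iff $C(X)(e_{\mu},e_{\lambda})=P_{\mu,\lambda}\neq\emptyset$, and since $\pi_{X}$ is a stratification, condition \ref{Tamaki} (valid by Proposition \ref{Andrade+open=Tamaki}) identifies $e_{\mu}\subset\overline{e_{\lambda}}$ with $\mu\le\lambda$ in $P(X)$. The one nontrivial point, and what I expect to be the main care-point rather than mere bookkeeping, is that $P_{\mu,\lambda}$ is genuinely nonempty whenever $\mu<\lambda$: here I would invoke covering axiom (3) of the cylindrical structure, noting that the first commutative diagram forces $\varphi_{\lambda}\circ b_{\mu,\lambda}$ to map $P_{\mu,\lambda}\times\inte(D_{\mu})$ into $e_{\mu}$, so the decomposition $D_{\lambda}=\bigcup_{e_{\mu}\subset\partial e_{\lambda}}b_{\mu,\lambda}(P_{\mu,\lambda}\times\inte(D_{\mu}))$ can reach the nonempty fiber $\varphi_{\lambda}^{-1}(e_{\mu})$ only if $P_{\mu,\lambda}\neq\emptyset$. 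Combined with $P_{\lambda,\lambda}=\{*\}$, this gives $C(X)(e_{\mu},e_{\lambda})\neq\emptyset \iff \mu\le\lambda$, so the underlying poset of $C(X)$ is $P(X)$, as claimed.
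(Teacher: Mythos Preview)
Your proposal is correct and, since the paper offers no proof beyond declaring the lemma ``obvious from the definition,'' your argument is simply an explicit unpacking of what the paper leaves implicit. The one point you single out as nontrivial---that $P_{\mu,\lambda}\neq\emptyset$ whenever $\mu<\lambda$, argued via surjectivity of $\varphi_{\lambda}$ together with the covering axiom and the first commutative diagram---is exactly the detail worth recording, and your reasoning there is sound.
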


\begin{definition}
 Let $(X,\pi_X,\Phi_X)$ and $(Y,\pi_Y,\Phi_Y)$ be CNSSSs
 with cylindrical structures given by 
 $\{b^X_{\mu,\lambda} : P^X_{\mu,\lambda}\times D^{X}_{\mu}\to D^{X}_{\lambda}\}$
 and
 $\{b^Y_{\alpha,\beta} : P^Y_{\alpha,\beta}\times D^{Y}_{\alpha}\to
 D^{Y}_{\beta}\}$, respectively.

 A \emph{morphism of CNSSSs}
 from $(X,\pi_X,\Phi_X)$ to $(Y,\pi_Y,\Phi_Y)$ is a morphism of stellar
 stratified spaces 
 \[
 \bm{f} : (X,\pi_X,\Phi_X) \longrightarrow
 (Y,\pi_Y,\Phi_Y) 
 \]
 together with maps
 $f_{\mu,\lambda} : P_{\mu,\lambda}^{X} \to
 P^Y_{P(f)(\mu),P(f)(\lambda)}$ making the diagram
 \[
  \begin{diagram}
   \node{P^X_{\mu,\lambda}\times D_{\mu}}
   \arrow{e,t}{f_{\mu,\lambda}\times f_{\mu}}
   \arrow{s,l}{b^X_{\mu,\lambda}} 
   \node{P^Y_{P(f)(\mu),P(f)(\lambda)}\times
   D_{P(f)(\mu)}}
   \arrow{s,r}{b^Y_{\underline{f}(\mu),P(f)(\lambda)}} \\  
   \node{D_{\lambda}} \arrow{e,b}{f_{\lambda}}
   \node{D_{P(f)(\lambda)}} 
  \end{diagram}
 \]
 commutative.
 
 The category of CNSSSs is denoted by $\CNSSS$.
 The full subcategories of cylindrically normal CW stellar complexes and
 of cylindrically normal cellular stratified spaces are
 denoted by $\category{CNCW}$ and $\category{CNCSS}$, respectively. 
\end{definition}

One of main results of \cite{1609.04500} is the following.

\begin{theorem}[Theorem 5.16 of \cite{1609.04500}]
 \label{Salvetti}
 For a CW cylindrically normal cellular stratified space $X$, there
 exists a natural embedding $i_{X}:BC(X)\hookrightarrow X$ which is a
 homeomorphism when $X$ is a CW complex. 
\end{theorem}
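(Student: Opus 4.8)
The plan is to build $i_X$ cell by cell from the cone structure underlying each aster $D_\lambda\subset\cone(S_\lambda)$, using the attaching maps $b_{\mu,\lambda}$ and the characteristic maps $\varphi_\lambda$ of the cylindrical structure. Since the face category $C(X)$ is acyclic (Lemma \ref{face_category_of_CNCSS}), I would first pass to $BC(X)=\|\nN(C(X))\|$ via Lemma \ref{lemma:nerve_of_acyclic_category}, so that only strictly increasing flags $\lambda_0<\cdots<\lambda_k$ with parameters $p_i\in P_{\lambda_{i-1},\lambda_i}$ and points $\mathbf t\in\inte\DDelta^k$ arise, and degeneracies never intervene. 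On such a simplex I define $\Psi_{\vec p}(\mathbf t)\in D_{\lambda_k}$ by descending recursion in the cone coordinate: with radial parameter $r=1-t_k$, set $\Psi_{\vec p}(\mathbf t)=(1-r)\,{*}+r\cdot b_{\lambda_{k-1},\lambda_k}(p_k,\Psi')$, where $\Psi'$ is produced from the truncated flag $\lambda_0<\cdots<\lambda_{k-1}$ with the barycentric coordinates renormalized to sum to one, the recursion bottoming out at the apex $*\in D_{\lambda_0}$. Then $i_X:=\varphi_{\lambda_k}\circ\Psi_{\vec p}$ on this simplex. Continuity is clear away from $t_k=1$; as $t_k\to 1$ one has $r\to 0$ and $\Psi_{\vec p}\to *$, so the renormalization singularity is absorbed and the map extends continuously to the closed simplex.

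Next I would check that $i_X$ respects the $\Delta$-space face maps, so that it descends to $BC(X)$. The top face $d_k$ (where $t_k=0$, so $r=1$) lands at $b_{\lambda_{k-1},\lambda_k}(p_k,\Psi')\in\partial D_{\lambda_k}$, and the first commutative diagram of Definition \ref{definition:cylindrical_normality}, namely $\varphi_{\lambda_k}\circ b_{\lambda_{k-1},\lambda_k}=\varphi_{\lambda_{k-1}}\circ\pr_2$, identifies its image with that of the truncated flag. The bottom face $d_0$ (where $t_0=0$) collapses the innermost cone to its apex, giving the flag with $\lambda_0$ deleted, because $P_{\lambda_0,\lambda_0}$ is a single point. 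An interior face $d_i$ deletes $\lambda_i$ and composes $p_{i+1}$ with $p_i$; that this leaves $\Psi$ unchanged is exactly the second diagram $b_{\lambda_i,\lambda_{i+1}}\circ(1\times b_{\lambda_{i-1},\lambda_i})=b_{\lambda_{i-1},\lambda_{i+1}}\circ(c_{\lambda_{i-1},\lambda_i,\lambda_{i+1}}\times 1)$. Hence $i_X$ is well defined and continuous, and naturality in $\CNSSS$ follows since a morphism supplies maps $f_\lambda$ and $f_{\mu,\lambda}$ intertwining the $\varphi$'s and $b$'s, hence the recursion defining $\Psi$.

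For injectivity, note that when $\mathbf t\in\inte\DDelta^k$ one has $r<1$ at every stage, so $\Psi_{\vec p}(\mathbf t)\in\inte(D_{\lambda_k})$ and $i_X$ sends it into the open stratum $e_{\lambda_k}$; thus $\lambda_k$ is recovered from the stratum of the image, and $\Psi_{\vec p}(\mathbf t)$ is recovered by the homeomorphism $\varphi_{\lambda_k}|_{\inte(D_{\lambda_k})}$. Reading off the cone coordinate recovers $r$, hence $t_k$, and the direction $y=b_{\lambda_{k-1},\lambda_k}(p_k,\Psi')\in\partial D_{\lambda_k}$; the decomposition $D_{\lambda_k}=\bigcup_{\mu}b_{\mu,\lambda_k}(P_{\mu,\lambda_k}\times\inte(D_\mu))$ of condition 3, together with the embedding property of $b_{\mu,\lambda_k}$ on $P_{\mu,\lambda_k}\times\inte(D_\mu)$ from condition 1, pins down $\mu=\lambda_{k-1}$, the parameter $p_k$, and $\Psi'\in\inte(D_{\lambda_{k-1}})$ uniquely; induction then recovers the whole flag, all parameters, and $\mathbf t$. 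To upgrade injectivity to an embedding I would invoke the CW hypotheses: both sides carry weak topologies (for $X$ by Definition \ref{definition:CW}, for $\|\nN(C(X))\|$ by construction), $i_X$ is cellular, and the $i_X$-image of each closed simplex is compact, so the continuous bijection onto its image is closed and hence a homeomorphism onto its image. Finally, when $X$ is a genuine CW complex each $D_\lambda$ is a full disk with $\inte(D_\lambda)=\inte(D^n)$, and the same condition 3 shows every radial point of $\inte(D_\lambda)$ is attained, so $i_X$ is onto each $e_\lambda$ and therefore surjective; with the embedding property this yields the asserted homeomorphism.

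The main obstacle I anticipate lies not in the combinatorics of the face identifications, which are governed cleanly by the three diagrams of the cylindrical structure, but in the point-set control required for the embedding and surjectivity claims: verifying that the cone coordinates behave well near the apex and along $\partial D_\lambda$, and that the weak-topology gluing is legitimate. This is precisely where closure finiteness and the CW hypotheses are genuinely used, and is the step I would treat most carefully.
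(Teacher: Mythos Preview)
The paper does not actually prove this statement: it is quoted verbatim as Theorem~5.16 of \cite{1609.04500} and used as a black box, with only the remark that the construction there extends from cellular to stellar stratified spaces since the embeddability-in-a-disk hypothesis is never used. So there is no proof in this paper to compare your proposal against.

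That said, your outline is a plausible reconstruction of the intended argument and is essentially the ``iterated cone coordinate'' construction one would expect. The recursion $\Psi_{\vec p}(\mathbf t)=(1-r)\ast + r\cdot b_{\lambda_{k-1},\lambda_k}(p_k,\Psi')$ with $r=1-t_k$ is the right idea, and your verification of compatibility with $d_k$ and the interior faces $d_i$ via the first two commutative diagrams of Definition~\ref{definition:cylindrical_normality} is correct. Your treatment of $d_0$ is slightly garbled: the phrase ``because $P_{\lambda_0,\lambda_0}$ is a single point'' is irrelevant here. What actually happens when $t_0=0$ is that at the innermost stage the renormalized first coordinate vanishes, so the innermost $\Psi'$ collapses to the apex $\ast_{\lambda_1}$, and the recursion from there coincides with the recursion for the truncated flag $\lambda_1<\cdots<\lambda_k$ on $(t_1,\ldots,t_k)$. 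The conclusion is right; only the stated reason is off.

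Your injectivity argument is sound, and you correctly flag the point-set issues around the embedding and surjectivity claims as the place where the CW hypotheses do real work. One small caution on surjectivity: you invoke condition~3 of Definition~\ref{definition:cylindrical_normality} to say that every boundary direction in $\partial D_\lambda$ is hit by some $b_{\mu,\lambda}$, which is what is needed; just be careful that the recursive construction then reaches every radius $r\in[0,1)$ along each such direction, which it does since the top-level cone coordinate is unconstrained.
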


The embedding $i_{X}$ is, in fact, constructed for
``cylindrically normal stellar stratified spaces'' 
in the sense of \cite{1609.04500}. They differ from CNSSS in this paper
by the requirement that the domain $D_{\lambda}$ of the stellar
structure $\varphi_{\lambda}:D_{\lambda}\to \overline{e_{\lambda}}$ of
each stratum is embedded in a disk. This embeddability condition is not
used in the construction of $i_{X}$.

\begin{corollary}
 The embedding in Theorem \ref{Salvetti} can be extended to CW
 CNSSSs. When $X$ is a cylindrically normal stellar complex, the
 embedding $i_{X}:BC(X)\to X$ is a homeomorphism.
\end{corollary}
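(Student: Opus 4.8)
The plan is to handle the two claims in turn, extending the construction of \cite{1609.04500} for the first and upgrading injectivity to bijectivity for the second. For the first claim, I would reread the construction of the embedding $i_X:BC(X)\to X$ in the proof of Theorem \ref{Salvetti}. Using Lemma \ref{lemma:nerve_of_acyclic_category} to write $BC(X)=\|\nN(C(X))\|$, that construction sends a nondegenerate $k$-simplex of $\nN(C(X))$---a strict chain $e_{\lambda_0}<e_{\lambda_1}<\cdots<e_{\lambda_k}$ together with parameters $p_i\in P_{\lambda_{i-1},\lambda_i}$ and barycentric coordinates $\bm t\in\DDelta^k$---into $\overline{e_{\lambda_k}}\subset X$ by iterating the structure maps $b_{\mu,\lambda}$ and then applying the characteristic map $\varphi_{\lambda_k}$, the commuting squares and the cocycle condition on the $c_{\lambda_0,\lambda_1,\lambda_2}$ ensuring compatibility with the face identifications. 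As observed in the discussion following Theorem \ref{Salvetti}, this construction uses only that each $D_\lambda$ is an aster in some $\cone(S_\lambda)$ with $\varphi_\lambda$ a quotient map restricting to a homeomorphism on $\inte(D_\lambda)$; it never uses that $D_\lambda$ is embedded in a disk. Thus the construction, and the verification that $i_X$ is an embedding, transfer verbatim to a CW CNSSS in the present sense, which proves the first sentence.

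For the homeomorphism I would use the elementary fact that a surjective embedding is a homeomorphism, so it suffices to prove that $i_X$ is surjective. Fix $x\in X$; it lies in a unique stratum $e_\lambda$, hence $x=\varphi_\lambda(d)$ for a unique $d\in\inte(D_\lambda)$. This is exactly where thinness is used: since $X$ is a stellar complex, $D_\lambda=\{*\}\star\partial D_\lambda$ is a genuine cone, so $\inte(D_\lambda)=\cone^{\circ}(\partial D_\lambda)$ and $d$ has a radial expression $d=(1-t)*+ty$ with $t\in[0,1)$ and $y\in\partial D_\lambda$. The covering axiom (3) of a cylindrical structure (Definition \ref{definition:cylindrical_normality}) then lets me write $y=b_{\mu,\lambda}(p,z)$ for some $e_\mu\subset\partial e_\lambda$, $p\in P_{\mu,\lambda}$ and $z\in\inte(D_\mu)$, and since $e_\mu$ is again thin I repeat the decomposition on $z$.

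Iterating this procedure produces a strict descending chain in $P(X)$ below $\lambda$; it terminates because the down-set $P(X)_{<\lambda}$ is finite (closure finiteness) and $C(X)$ is acyclic, so there are no infinite descending chains. The output is precisely a nondegenerate chain $e_{\lambda_0}<\cdots<e_{\lambda_r}=e_\lambda$, a tuple $(p_1,\ldots,p_r)$ of parameters, and a collection of radial coordinates, i.e.\ the data of a point of a nondegenerate simplex of $\nN(C(X))$ lying over $x$ under $i_X$. Hence $i_X$ is surjective, and therefore a homeomorphism.

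I expect the one genuinely technical point to be bookkeeping rather than a new idea: namely, verifying that the successive radial coordinates $t$ produced at the stages of the recursion assemble into a single simplex coordinate $\bm t\in\DDelta^r$, and that the choices $(\mu,p,z)$ are pinned down modulo exactly the degeneracies already quotiented away in passing to $\nN(C(X))$. This is controlled by the same commuting squares and cocycle identity for the $c_{\lambda_0,\lambda_1,\lambda_2}$ that make $i_X$ well defined on simplices, so the verification amounts to matching the iterated cone coordinates against the explicit formula for $i_X$. I would emphasize that thinness is indispensable at the radial step: for a non-thin aster there are points of $\inte(D_\lambda)$ not of the form $(1-t)*+ty$, and such points have no preimage, so the embedding $i_X$ fails to be surjective in general.
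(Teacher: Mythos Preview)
Your proposal is correct and follows the same line as the paper. The paper's entire justification is the remark immediately preceding the corollary: the construction of $i_X$ in \cite{1609.04500} never uses that each $D_\lambda$ embeds in a disk, so both the embedding and the homeomorphism statements carry over verbatim to the present notion of CNSSS. Your first paragraph reproduces this observation, and your surjectivity argument via iterated radial decomposition of a point in $\inte(D_\lambda)=\cone^{\circ}(\partial D_\lambda)$ is precisely the argument that lies behind the homeomorphism claim in \cite{1609.04500}; the paper simply leaves it implicit rather than spelling it out again.
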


\section{Stellar Stratifications on Classifying Spaces of Acyclic Categories}
\label{stellar_stratification_on_BC}

Note that the space of objects in the face category $C(X)$ of a CNSSS
$X$ has the discrete topology. Let us call such a topological category a
\emph{top-enriched category}.
In the rest of this paper, we restrict our attention to acyclic
top-enriched categories.
We introduce two stellar stratifications on $BC$ for such a
category $C$. 

\subsection{Stable and Unstable Stratifications}
\label{stable_and_unstable}

Let $C$ be an acyclic top-enriched category.
We first need to define stratifications on $BC$.
An obvious choice is the simplicial stratification
\[
 \pi_{\nN(C)} : BC \rarrow{} \coprod_{n\ge 0}\nN_{n}(C) 
\]
in Example \ref{example:simplicial_complex}, since $BC$ is 
homeomorphic to the geometric realization of a $\Delta$-space
$\nN(C)$.
Unfortunately this stratification is too fine for our purpose.

\begin{definition}
 \label{definition:unstable_stratification}
 The composition
 \[
 BC \rarrow{\pi_{\nN(C)}} \coprod_{n\ge 0}\nN_{n}(C) \rarrow{t}
 C_{0} 
 \]
 is denoted by $\pi_{C}$. It is easy to see that this is a
 stratification when $C_{0}$ is regarded as the poset $P(C)$ associated
 with $C$.
 This is called the \emph{unstable stratification} on $BC$.

 We also have a dual stratification
 \[
 \pi_{C^{\op}} : BC \rarrow{\pi_{\nN(C)}}
 \coprod_{n\ge 0}\nN_{n}(C) \rarrow{s} C_{0},
 \]
 which should be called the \emph{stable stratification}.
\end{definition}

\begin{remark}
 We regard $BC$ as a stratified space by the unstable stratification.
 The stable stratification will be used in \S\ref{exit-path}. 
 These terminologies are borrowed from the
 ``classifying space approach'' to Morse theory
 \cite{Cohen-Jones-SegalMorse,1612.08429}.  
\end{remark}

\begin{example}
 Consider the acyclic top-enriched category $C$ with two objects $x$ and
 $y$ and $C(x,y)=S^{n-1}$. The classifying space $BC$ is a quotient of
 \[
 C_{0}\times\DDelta^{0} \amalg
 (C_{1}\setminus\{1_{x},1_{y}\})\times\DDelta^{1} \cong 
 C_{0}\amalg C(x,y)\times [0,1].
 \]
 The relation is defined by identifying $C(x,y)\times\{0\}$ and
 $C(x,y)\times\{1\}$ with $x$ and $y$, respectively. Thus it is 
 a suspension of $S^{n-1}$.

 The simplicial stratification
 $\pi_{\nN(C)}:BC=\Sigma(S^{n-1}) \to C_{0}=\{x,y\}$
 is given by 
 \[
 \pi_{\nN(C)}([u,t]) =
 \begin{cases}
  x, & t=0, \\
  u, & 0<t<1, \\
  y, & t=1,
 \end{cases}
 \]
 while the unstable stratification $\pi_{C}$ is given by
 \[
 \pi_{C}([u,t]) =
 \begin{cases}
  x, & t<1 \\
  y, & t=1.
 \end{cases}
 \]
 The stratum indexed by $x$ can be identified with the space of flows
 going out of $x$. This example justify the terminology.
\end{example}

\begin{example}
 \label{example:max_stratification_on_simplex}
 Let $P=[2]=\{0<1<2\}$. The classifying space $BP$ is homeomorphic
 to the standard $2$-simplex $\DDelta^{2}$. Under this identification,
 the unstable stratification $\pi_{[2]}:\DDelta^{2}\to [n]$ is given by
 \[
  \pi_{[2]}(t_{0},t_{1},t_{2})= \max\set{i}{ t_{i}\neq 0}.
 \]
 Let $e_{i}=\pi_{[2]}^{-1}(i)$ for $i\in[2]$. This stratification on
 $\DDelta^2$ is given as in Figure
 \ref{figulre:max_stratification_on_2-simplex}. 

 \begin{figure}[ht]
  \begin{center}
   \begin{tikzpicture}
    \draw [fill,lightgray] (0,0) -- (2,0) -- (1,1.73) -- (0,0);
    \draw (0,0) -- (2,0) -- (1,1.73) -- (0,0);
    \draw [fill] (0,0) circle (2pt);
    \draw [fill] (2,0) circle (2pt);
    \draw [fill] (1,1.73) circle (2pt);
    \draw (1,-0.3) node {$\DDelta^2$};
    
    \draw (2.5,1) node {$=$};

    \draw [fill] (3,0) circle (2pt);
    \draw (3,-0.3) node {$e_{0}$};

    \draw (3.5,0.5) node {$\cup$};

    \draw (4,0) -- (6,0);
    \draw [fill] (4,0) circle (2pt);
    \draw [fill,white] (4,0) circle (1pt);
    \draw [fill] (6,0) circle (2pt);
    \draw (5,-0.3) node {$e_{1}$};

    \draw (6.5,0.5) node {$\cup$};

    \draw [fill,lightgray] (7,0) -- (8,1.73) -- (9,0) -- (7,0);
    \draw (7,0) -- (8,1.73) -- (9,0);
    \draw [dotted] (7,0) -- (9,0);
    \draw [fill] (7,0) circle (2pt);
    \draw [fill,white] (7,0) circle (1pt);
    \draw [fill] (9,0) circle (2pt);
    \draw [fill,white] (9,0) circle (1pt);
    \draw [fill] (8,1.73) circle (2pt);
    \draw (8,-0.3) node {$e_{2}$};    
   \end{tikzpicture}
  \end{center}
  \caption{The unstable stratification on $B[2]$}
  \label{figulre:max_stratification_on_2-simplex}
 \end{figure}
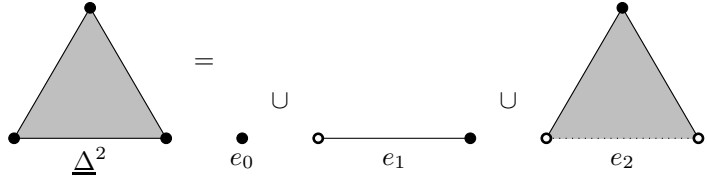

 The inclusions $e_{0}\subset \overline{e_{1}}$ and
 $e_{1}\subset\overline{e_{2}}$ imply that the poset $[2]$ can be
 recovered from this stratification.
 
 More generally, the unstable stratification on $B[n]=\DDelta^{n}$ is
 given by
 \[
  \pi_{[n]}(t_{0},\ldots,t_{n}) = \max\set{i}{t_{i}\neq 0}
 \]
 This is the stratification appeared as Example 2.10 in \cite{1609.04500}.
 This stratification is used in the definition of the exit-path
 $\infty$-category. See \S\ref{exit-path}.
\end{example}

Our next task is to define a stellar structure on the unstable
stratification. This is done by using comma
categories.  

\begin{definition}
 \label{upper_and_lower_star}
 Let $C$ be an acyclic top-enriched category and $x$ an object of
 $C$. The nondegenerate nerve $\nN(C\downarrow x)$ of the
 comma category $C\downarrow x$ is denoted by $\St_{\le x}(C)$ and is
 called the \emph{lower star} of $x$ in $C$.

 The full subcategory of $C\downarrow x$ consisting of
 $(C\downarrow x)_{0}\setminus\{1_x\}$ is denoted by $C_{<x}$. 
 The nondegenerate nerve $\nN(C_{<x})$ is 
 denoted by $\Lk_{<x}(C)$ and is called the \emph{lower link} of $x$
 in $C$. 

 The functor induced by the source map in $C$ is denoted by
 \[
  s_x : C_{<x}\subset C\downarrow x \rarrow{} C.
 \]
 The induced map of $\Delta$-spaces is also denoted by
 \[
 s_x : \Lk_{<x}(C) \subset \St_{\le x}(C) \rarrow{}
 \nN(C). 
 \]

 Dually, we define $\St_{\ge x}(C)$, $C_{>x}$, $\Lk_{>x}(C)$ and
 $t_{x}: C_{>x} \subset x\downarrow C\to C$ by using
 $x\downarrow C$. The map induced by the functor $t_{x}$ is denoted by
 $t_{x}: \Lk_{>x}(C)\subset \St_{\ge x}(C)\to \nN(C)$.
\end{definition}

 It is straightforward to verify that $C\downarrow x$ and
 $x\downarrow C$ are acyclic when $C$ is.

We have the following description of the lower link.

\begin{lemma}
 \label{link}
 For an acyclic top-enriched category $C$ and an object $x\in C_0$,
 we have
 \[
  \Lk_{<x}(C)_k \cong 
 \begin{cases}
  \displaystyle \coprod_{x\neq y} C(y,x), & k=0 \\
  \set{\bm{u}\in \nN_{k+1}(C)}{t(\bm{u})=x}, & k>0.
 \end{cases} 
 \]
\end{lemma}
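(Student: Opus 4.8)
The plan is to unwind the definition $\Lk_{<x}(C) = \nN(C_{<x})$ and exhibit, for each $k$, an explicit bijection that identifies a nondegenerate $k$-chain in the comma category $C_{<x}$ with a nondegenerate $(k+1)$-chain in $C$ whose target is $x$, and then to check that this bijection is a homeomorphism.

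First I would record the shape of the objects and morphisms of $C_{<x}$. An object is a morphism $u : y \to x$ of $C$ with $y \neq x$; by acyclicity the only arrow into $x$ with source $x$ is $1_x$, so deleting $1_x$ from $(C\downarrow x)_0$ leaves exactly these. A morphism $(u_0 : y_0 \to x) \to (u_1 : y_1 \to x)$ is an arrow $h : y_0 \to y_1$ of $C$ with $u_1 \circ h = u_0$, and it is an identity of $C_{<x}$ if and only if $h$ is an identity of $C$. Hence a nondegenerate $k$-chain in $C_{<x}$ amounts to composable non-identity arrows $y_0 \xrightarrow{h_1} \cdots \xrightarrow{h_k} y_k$ of $C$ together with a structure map $u_k : y_k \to x$, the remaining objects being forced by $u_{i-1} = u_i \circ h_i$. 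For $k=0$ there are no $h_i$ and the chain is simply an object $u_0 : y_0 \to x$, which yields the asserted $\coprod_{x\neq y} C(y,x)$ directly.

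For $k>0$ the forward map sends this data to the $(k+1)$-chain $y_0 \xrightarrow{h_1} \cdots \xrightarrow{h_k} y_k \xrightarrow{u_k} x$ of $C$, i.e.\ it simply appends the structure morphism $u_k$. Since each $h_i$ is non-identity and $u_k$ is non-identity (its source $y_k \neq x$), this lands in $\nN_{k+1}(C)$ and its target is visibly $x$. The inverse map takes a nondegenerate chain $z_0 \xrightarrow{w_1} \cdots \xrightarrow{w_{k+1}} x$, sets $h_i = w_i$ for $i \le k$ and $u_k = w_{k+1}$, and recovers the intermediate structure maps by $u_i = w_{k+1} \circ \cdots \circ w_{i+1}$; the two assignments are visibly mutually inverse. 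The one point that needs the hypotheses is that each intermediate object $z_i$ with $i \le k$ is genuinely distinct from $x$, so that the $u_i$ are honest objects of $C_{<x}$: if $z_i = x$ for some $i \le k$, then $w_{i+1} : x \to z_{i+1}$ together with the composite $z_{i+1} \to x$ would make both $C(x,z_{i+1})$ and $C(z_{i+1},x)$ nonempty, forcing $z_{i+1}=x$ by acyclicity condition (1) and then $w_{i+1}=1_x$ by condition (2), contradicting nondegeneracy. This is the step where acyclicity is essential, and it is the main (though mild) obstacle; the rest is bookkeeping.

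Finally I would verify that the bijection is a homeomorphism for each $k$. In the coordinates underlying Lemma \ref{lemma:nerve_of_acyclic_category}, writing a morphism of $C\downarrow x$ as the pair consisting of its $C$-arrow and its target object, the forward map is the projection $\bigl((u_k,h_k),\ldots,(u_1,h_1)\bigr) \mapsto (u_k,h_k,\ldots,h_1)$ that discards the redundant intermediate objects, and is therefore continuous; the inverse reconstructs the discarded entries through $u_i = u_{i+1}\circ h_{i+1}$ using the composition map $\circ : N_2(C) \to C_1$, which is likewise continuous. Since $C_0$ is discrete by top-enrichment, no further point-set subtleties arise, and the two continuous mutually inverse maps give the claimed homeomorphism in each degree, completing the proof.
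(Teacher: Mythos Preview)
Your proposal is correct and is precisely the unwinding that the paper intends: the paper's own proof is the single phrase ``By inspection,'' and you have supplied the details of that inspection accurately, including the acyclicity argument showing that no intermediate object in a nondegenerate chain ending at $x$ can equal $x$.
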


\begin{proof}
 By inspection.
\end{proof}

\begin{definition}
 For $x\in C_{0}$, define
 \begin{align*}
  D_{x} & = \|\St_{\le x}(C)\|=B(C\downarrow x) \\
  \partial D_{x} & = \|\Lk_{<x}(C)\| = BC_{<x} \\
  D_{x}^{\op} & = \|\St_{\ge x}(C)\| = B(x\downarrow C) \\
  \partial D_{x}^{\op} & =  \|\Lk_{>x}(C)\| = BC_{>x}.
 \end{align*}
 The complements $D_{x}\setminus\partial D_{x}$ and
 $D_{x}^{\op}\setminus\partial D_{x}^{\op}$ are denoted by
 $D_{x}^{\circ}$ and $D_{x}^{\op,\circ}$, respectively.
 The maps induced by the source and the target maps are denoted by
 $s_{x} : D_{x} \to BC$ and $t_{x}: D_{x}^{\op}\to BC$, respectively.
\end{definition}

The geometric realization $\|\Lk_{<x}(C)\|$ has a stratification based
on the $\Delta$-space structure.
Lemma 7.12 of \cite{1609.04500} says that we have a homeomorphism
\[
 D_{x}^{\op}=\|\St_{\ge x}(C)\| \cong \cone(\|\Lk_{>x}(C)\|)=
 \{1_{x}\}\star \partial D_{x}^{\op}.
\]
The following is a dual.

\begin{lemma}
 \label{star_is_cone}
 For an acyclic top-enriched category $C$ and an object $x\in C_{0}$, we 
 have a homeomorphism
 \[
  D_{x} = \|\St_{\le x}(C)\| \cong \cone(\|\Lk_{<x}(C)\|) = 
 \partial D_{x}\star\{1_{x}\}.
 \]
 In particular, $D_{x}^{\circ}$ is an open cone on $\partial D_{x}$.
\end{lemma}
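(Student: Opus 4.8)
The plan is to deduce this from its dual, Lemma 7.12 of \cite{1609.04500}, after first isolating the structural reason a cone appears. The key observation is that, because $C$ is acyclic, $(x,1_{x})$ is a \emph{terminal} object of $C\downarrow x$: for any object $(y,f)$ the only morphism to $1_{x}$ is $f$ itself, while conditions (1) and (2) of Definition \ref{definition:acyclic_category} force the only morphism out of $1_{x}$ to be its identity (a map $(x,1_{x})\to(y,f)$ would be some $g$ with $fg=1_{x}$, and acyclicity rules this out unless $(y,f)=(x,1_{x})$). Hence in any nondegenerate chain of $C\downarrow x$ the vertex $1_{x}$ can appear only as the final vertex.

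This splits the nondegenerate $k$-chains of $C\downarrow x$ into those avoiding $1_{x}$, which are exactly the elements of $\Lk_{<x}(C)_{k}=\nN_{k}(C_{<x})$, and those ending at $1_{x}$, each of which is obtained from a nondegenerate $(k-1)$-chain in $C_{<x}$ by appending the canonical morphism to $1_{x}$ (together with the lone vertex $1_{x}$ when $k=0$). Using the decomposition $C_{1}=C_{0}\amalg(C_{1}\setminus C_{0})$ from condition (3), I would check that this is a partition into open-and-closed subspaces at each level, compatible with the face operators, which is precisely the levelwise description of the join of $\Delta$-spaces and so yields an isomorphism of $\Delta$-spaces
\[
\St_{\le x}(C)\;\cong\;\Lk_{<x}(C)\star\{1_{x}\},
\]
where $\{1_{x}\}$ denotes the one-point $\Delta$-space in degree $0$. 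Passing to realizations and using that the fat realization of a join with a point is the topological cone gives $D_{x}\cong\cone(\|\Lk_{<x}(C)\|)=\partial D_{x}\star\{1_{x}\}$, and restricting to the complement of the base identifies $D_{x}^{\circ}$ with the open cone.

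The shortest route, which is what I would ultimately write, bypasses the combinatorics by dualizing. There is the standard identity $(C\downarrow x)^{\op}\cong x\downarrow C^{\op}$, so applying Lemma 7.12 of \cite{1609.04500} to $C^{\op}$ produces a homeomorphism $\|\St_{\ge x}(C^{\op})\|\cong\{1_{x}\}\star\|\Lk_{>x}(C^{\op})\|$. Since $\St_{\ge x}(C^{\op})=\nN\bigl((C\downarrow x)^{\op}\bigr)$ and $\Lk_{>x}(C^{\op})=\nN\bigl((C_{<x})^{\op}\bigr)$, and since the realization of a $\Delta$-space is carried to that of its opposite by the canonical order-reversing homeomorphism of each $\DDelta^{k}$, these realizations are $D_{x}$ and $\partial D_{x}$ respectively; the order reversal moves the cone point across the join, turning $\{1_{x}\}\star\partial D_{x}$ into $\partial D_{x}\star\{1_{x}\}$, exactly as claimed.

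The one genuinely nontrivial step—rather than bookkeeping—is that the (fat) geometric realization of $\Delta$-spaces sends the join $K\star\{1_{x}\}$ to the topological cone $\cone(\|K\|)$. This is where the absence of degeneracies and the subspace topologies on the spaces $\nN_{k}$ must be tracked, and where acyclicity condition (3) is used to guarantee that the two families of chains assemble into honestly disjoint closed pieces. If one instead simply invokes the dual lemma, this point is subsumed into its proof, and the only remaining work is the routine verification that opposites interchange the two sides of the join.
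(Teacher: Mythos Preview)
Your proposal is correct, and your route (b) is exactly what the paper signals with the sentence ``The following is a dual'' immediately preceding the lemma. The paper then nonetheless writes out an explicit formula for the homeomorphism $h_{x}$ (and implicitly its dual $h_{x}^{\op}$), not because the dualization argument is inadequate for the lemma itself, but because the explicit map $h_{x}^{\op}$ is reused in \S\ref{exit-path} to define the map $c_{x}:D_{x}\times D_{x}^{\op}\to BC$ that establishes conical stratification. A purely abstract dualization would prove the lemma but leave you without the concrete description needed downstream; your combinatorial route (a), by contrast, does implicitly produce the same map once one unwinds the $\Delta$-space isomorphism and the join-realization identification.

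One small remark on (a): you cite condition (3) of Definition~\ref{definition:acyclic_category} for the open-and-closed splitting of $\nN_{k}(C\downarrow x)$ into chains ending at $1_{x}$ and those avoiding it, but in the top-enriched setting the discreteness of $C_{0}$ already makes ``last object equals $x$'' a clopen condition, so condition (3) is not really what is doing the work here.
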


In order to use explicit descriptions of these homeomorphisms, let us
sketch a proof. 

\begin{proof} 
 For $x\in C_{0}$, defined a map
 $h_{x} : D_{x} \to  BC_{<x} \star\{1_{x}\}$
 as follows.
 For $[(\bm{u},\bm{a})]\in D_{x}=\|\nN(x\downarrow C)\|$, 
 choose a representative
 $(\bm{u},\bm{a})\in \nN_{k}(C\downarrow x)\times\DDelta^{k}$ such that
 $\bm{a}$ is in the interior of $\DDelta^{k}$. The $k$-chain
 $\bm{u}$ in $C\downarrow x$ can be regarded as a $(k+1)$-chain in $C$
 of the following form
 \[
  \bm{u}: x_{0} \rarrow{u_1} x_{1} \to \cdots \to
 x_{k-1} \rarrow{u_{k}} x_{k} \rarrow{u_{k+1}} x.
 \]
 Note that $u_{1},\ldots,u_{k}$ are not identity morphisms, but
 $u_{k+1}$ can be. When $u_{k+1}$ is not an identity morphism, 
 $[\bm{u},\bm{a}]$ defines an element of $BC_{<x}$. Define
 \[
  h_{x}([\bm{u},\bm{a}]) = 1[\bm{u},\bm{a}]+ 01_{x}.
 \]
 When $u_{k+1}=1_{x}$, use the standard identification
 $\DDelta^{k}\cong \DDelta^{k-1}\star \{\bm{e}_{k+1}\}$ to denote
 $\bm{a} = (1-t)\bm{a}'+t\bm{e}_{k+1}$, where $\bm{e}_{i}$ is the $i$-th
 vertex of $\DDelta^{k}$. And define
 \[
 h_{x}([\bm{u},\bm{t}]) =
 (1-t)[(u_{1},u_{2},\ldots,u_{k+1}\circ u_{k}),\bm{a}']
 +t1_{x}.
 \]
 Here we regard $(u_{1},u_{2},\ldots,u_{k+1}\circ u_{k})$ as an element
 of $\nN_{k-1}(C_{<x})$ under the identification of Lemma \ref{link}.

 A map $h_{x}^{\op}: D_{x}^{\op}\to \{1_{x}\}\star BC_{>x}$ is
 defined by reversing arrows.
 It is straightforward to verify that these maps are homeomorphisms.
\end{proof}

In order to describe the image of $D^{\circ}_{x}$ under $s_{x}$, 
we need the following generalization of the dual of Lemma 7.15 of
\cite{1609.04500}. 

\begin{lemma}
 \label{stratum_in_BC}
 Let $C$ be an acyclic top-enriched category.
 Then, for each $x\in C_{0}$,
 we have
 \[
 \pi_{C}^{-1}(x) = s_{x}(D_{x}^{\circ}) \text{ and }
  \overline{\pi_{C}^{-1}(x)} = s_{x}(D_{x}).
 \]
 Furthermore
 \begin{equation}
  s_{x}(D_{x}^{\circ}) =
   p_{C}\left(
	 \coprod_{k} \coprod_{x_{0}<\cdots<x_{k-1}<x}
	 C(x_{k-1},x)\times \cdots\times  C(x_{0},x_{1})\times
	 \left(\DDelta^k \setminus d^{k}(\DDelta^{k-1})\right)
	\right),
   \label{equation:image_of_sx}
 \end{equation}
 where $p_{C}:\coprod_{k}\nN_{k}(C)\times\DDelta^k\to BC$ is the
 projection.
\end{lemma}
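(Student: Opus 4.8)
The plan is to unwind the definitions of $\pi_C$ and of $s_x$ at the level of nondegenerate chains and to match explicit representatives. Recall that $\pi_C$ sends a point $[\mathbf{v},\mathbf{b}]$, with $\mathbf{v}\in\nN_m(C)$ and $\mathbf{b}\in\inte\DDelta^m$, to the target $t(\mathbf{v})$ of the chain $\mathbf{v}$; hence $\pi_C^{-1}(x)$ consists exactly of those points whose nondegenerate representative $\mathbf{v}=(x_0\xrightarrow{u_1}\cdots\xrightarrow{u_m}x_m)$ has $x_m=x$ and $\mathbf{b}$ interior. On the other side, by Lemma \ref{star_is_cone} a point of $D_x=\|\St_{\le x}(C)\|$ has a representative which, viewed in $C$, is a $(k+1)$-chain $x_0\xrightarrow{u_1}\cdots\xrightarrow{u_k}x_k\xrightarrow{u_{k+1}}x$ with $u_1,\dots,u_k$ non-identity and $u_{k+1}$ possibly the identity; its class lies in $D_x^{\circ}$ precisely when $u_{k+1}=1_x$, i.e.\ when $x_k=x$. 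This is the open-cone condition: $\partial D_x=BC_{<x}$ is the subcomplex of $B(C\downarrow x)$ spanned by the objects $\ne 1_x$, and $1_x$ is terminal in $C\downarrow x$, so a nondegenerate chain avoids $1_x$ exactly when its top object is $\ne 1_x$. Under the source functor the trailing identity is dropped, so $s_x[\mathbf{u},\mathbf{a}]=[(u_k,\dots,u_1),\mathbf{a}]$, a nondegenerate chain of target $x$; this gives $s_x(D_x^{\circ})\subseteq\pi_C^{-1}(x)$. Conversely any nondegenerate $m$-chain of target $x$ lifts back to $C\downarrow x$ by appending $1_x$ as top object, yielding the reverse inclusion and hence $\pi_C^{-1}(x)=s_x(D_x^{\circ})$.

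The displayed formula merely re-expresses $s_x(D_x^{\circ})$ through the quotient map $p_C$, allowing barycentric coordinates on $\partial\DDelta^k$ as long as the last coordinate is positive, since $\DDelta^k\setminus d^k(\DDelta^{k-1})=\{t_k>0\}$. For the nontrivial inclusion I would start from $[(u_k,\dots,u_1),(t_0,\dots,t_k)]$ indexed by $x_0<\cdots<x_{k-1}<x$ with $t_k>0$, and collapse every vanishing coordinate using the realization identity $p_C(\mathbf{v},d^i\mathbf{t}')=p_C(d_i\mathbf{v},\mathbf{t}')$. Because $t_k>0$ the top vertex is never collapsed, so the target stays $x$; and because $C$ is acyclic, each inner face $d_i$ ($0<i<k$) composes two non-identity arrows $x_{i-1}<x_i<x_{i+1}$ into a non-identity arrow, while $d_0$ merely omits the initial object, so the chain remains nondegenerate throughout. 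The outcome is a nondegenerate interior representative with target $x$, i.e.\ a point of $\pi_C^{-1}(x)$. The reverse inclusion is immediate, as every point of $\pi_C^{-1}(x)$ already has such a representative, with all $t_i>0$.

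For the closure statement I would first write $s_x(D_x)=s_x(D_x^{\circ})\cup s_x(\partial D_x)$ and identify $s_x(\partial D_x)=\bigcup_{y<x}e_y$ by the same lift-and-drop argument applied to $\partial D_x=BC_{<x}$: a nondegenerate chain in $C_{<x}$ pushes forward to a chain of some target $y<x$, and conversely any nondegenerate chain of target $y<x$ lifts into $C_{<x}$ once a morphism $y\to x$ is chosen (which exists precisely because $y<x$). Hence $s_x(D_x)=\bigcup_{y\le x}e_y$. The remaining point, and the one place where care is needed, is to identify this union with $\overline{e_x}$. I would deliberately avoid arguing that $s_x$ is closed or proper, since $C\downarrow x$ need not be finite under the hypotheses of this lemma; instead I would use that $\pi_C$ is a genuine stratification in the sense of Definition \ref{definition:stratification}, so by Propositions \ref{Andrade+open=Tamaki} and \ref{Andrade+Tamaki->normal} it satisfies conditions \ref{Tamaki} and \ref{normal}, whence $\overline{e_x}=\bigcup_{y\le x}e_y$ exactly as in the proof of Proposition \ref{Tamaki+normal+Hiro->Andrade}. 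Combining the two computations gives $\overline{\pi_C^{-1}(x)}=s_x(D_x)$. The main obstacle is therefore not any individual inclusion but keeping the bookkeeping of representatives consistent across the cone homeomorphism of Lemma \ref{star_is_cone} and the source functor, and, for the closure, resisting a topological proof of closedness in favor of deducing it from normality of the stratification.
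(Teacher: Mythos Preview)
Your argument is correct, and for the identification $\pi_{C}^{-1}(x)=s_{x}(D_{x}^{\circ})$ and for the displayed description of $s_{x}(D_{x}^{\circ})$ it is essentially the paper's argument, written out with more care about which representatives are nondegenerate and why the target is preserved under face collapses.

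Where you genuinely diverge from the paper is in the closure statement. The paper argues topologically: since $C_{0}$ is discrete, $BC$ carries the weak topology with respect to the pieces $p_{C}\bigl(C(x_{k-1},x_{k})\times\cdots\times C(x_{0},x_{1})\times\DDelta^{k}\bigr)$, so the closure of $s_{x}(D_{x}^{\circ})$ is obtained from the description~(\ref{equation:image_of_sx}) simply by adjoining the missing faces $d^{k}(\DDelta^{k-1})$, which yields $s_{x}(D_{x})$. You instead compute $s_{x}(D_{x})=\bigcup_{y\le x}e_{y}$ set-theoretically, via $s_{x}(\partial D_{x})=\bigcup_{y<x}e_{y}$, and then invoke that $\pi_{C}$ is a stratification in the sense of Definition~\ref{definition:stratification}, so Propositions~\ref{Andrade+open=Tamaki} and~\ref{Andrade+Tamaki->normal} give $\overline{e_{x}}=\bigcup_{y\le x}e_{y}$. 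Your route is cleaner in that it separates the set-theoretic image computation from the topological closure, and it sidesteps any properness argument for $s_{x}$; the price is that it leans on the assertion in Definition~\ref{definition:unstable_stratification} that $\pi_{C}$ is open and continuous, which the paper states but does not prove there. The paper's route is more self-contained at this point but requires the explicit weak-topology observation.
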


\begin{proof}
 Let us first show (\ref{equation:image_of_sx}).
 An element of $D_{x}^{\circ}$ can be represented by a pair
 $(\bm{u},\bm{t})$ of $\bm{u}\in \nN_{k}(C_{<x})$ and
 $\bm{t}\in \DDelta^k$, where $\bm{u}$ is a sequence of morphisms
 $v_{0}\rarrow{u_1} v_{1}\to \cdots \to v_{k-1}\rarrow{u_{k}} v_{k}$
 in $C_{<x}$ with $v_{i}\neq 1_{x}$ for all $i$ and $u_{k}=1_{x}$.
 The only possibility for such an element to be equivalent to an element
 of $\coprod_{k}\nN_{k}(C_{<x})\times\DDelta^k$ is that
 $\bm{t}\in d^k(\DDelta^{k-1})$. And we obtain
 (\ref{equation:image_of_sx}). 

 By definition, the composition $\pi_{C}\circ s_{x}$ is a constant map
 onto $x$ when restricted to $D^{\circ}_{x}$ and thus
 $s_{x}(D^{\circ}_{x})\subset \pi_{C}^{-1}(x)$. 

 Suppose $\pi_{C}([\bm{u},\bm{t}])=x$ for
 $\bm{u}=(u_{k},\ldots,u_{1})\in\nN_{k}(C)$ and
 $\bm{t}\in \inte\DDelta^{k}$. 
 Then $t(u_{k})=x$ and the sequence $(u_{k},\ldots,u_{1})$ can be
 regarded as an element of
 $\nN_{k}(C\downarrow x)=\St_{\le x}(C)_{k}$ as follows
 \begin{equation}
  \begin{diagram}
   \node{x_{0}} \arrow{e,t}{u_1}
   \arrow{ese,b}{u_{k}\circ\cdots\circ u_{1}} \node{\cdots}
   \arrow{e,t}{u_k} \node{x} \arrow{s,r}{1_{x}} \\
   \node{} \node{} \node{x.}
  \end{diagram}
 \label{diagram:adding_1_at_the_end}
 \end{equation}
 Since $\bm{t}\in\inte\Delta^{k}$, the pair $(\bm{u},\bm{t})$ represents
 and element of $D_{x}^{\circ}$ whose image under $s_{x}$ is
 $[\bm{u},\bm{t}]$. Thus we have $s_{x}(D_{x}^{\circ})=\pi_{C}^{-1}(x)$.

 By taking the closure, we have
 $\overline{s_{x}(D_{x}^{\circ})}=\overline{\pi_{C}^{-1}(x)}$.
 Since $C_{0}$ is discrete, the topology of $BC$ is given by the weak
 topology defined by the covering
 \[
  \{p_{C}(C(x_{k-1},x_{k})\times\cdots\times C(x_{0},x_{1})\times
 \Delta^k)\}_{k\ge 0, x_{0}\le\cdots\le x_{k}}.
 \]
 Under the description of (\ref{equation:image_of_sx}), the closure of
 $s_{x}(D_{x}^{\circ})$ is given by adding
 $C(x_{k-1},x)\times \cdots\times C(x_{0},x_{1})\times d^{k}(\DDelta^k)$.
 And we have $\overline{\pi_{C}^{-1}(x)}=s_{x}(D_{x})$.
\end{proof}

Note that we used the fact that $C_{0}$ has the discrete topology.

In order for $s_{x}$ to be a stellar structure, we need to impose a
finiteness condition on $C$.

\begin{definition}
 An acyclic top-enriched category $C$ is called \emph{locally finite} if
 $P(C)_{<x}=\set{y\in C_{0}}{y<x}$ is finite.
\end{definition}

\begin{lemma}
 Let $C$ be an acyclic top-enriched category. If $C$ is locally finite
 and the morphism space $C(x,y)$ is compact Hausdorff for each pair
 $x,y\in C_{0}$, then $s_{x}: D_{x}\to BC$ is a stellar structure on
 $\pi_{C}^{-1}(x)$ for each $x\in C_{0}$.
\end{lemma}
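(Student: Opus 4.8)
The plan is to verify the four requirements in the definition of a stellar structure for the map $s_{x}\colon D_{x}\to\overline{e_{x}}$: that $D_{x}$ is an aster, that $s_{x}$ is a morphism of stratified spaces onto $\overline{e_{x}}$, that it is a quotient map, and that its restriction to $\inte(D_{x})$ is a homeomorphism onto $e_{x}=\pi_{C}^{-1}(x)$. The first requirement is immediate from Lemma \ref{star_is_cone}, which identifies $D_{x}$ with $\cone(\partial D_{x})=\{1_{x}\}\star\partial D_{x}$; thus $D_{x}$ is a thin aster with $S_{x}=\partial D_{x}=\|\Lk_{<x}(C)\|$ and $\inte(D_{x})=D_{x}^{\circ}$. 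The map $s_{x}$ is induced by the source functor $C\downarrow x\to C$, hence is continuous and compatible with the stratifications, giving the morphism-of-stratified-spaces structure; and Lemma \ref{stratum_in_BC} already records that $s_{x}(D_{x})=\overline{e_{x}}$ and $s_{x}(D_{x}^{\circ})=e_{x}$. It remains to promote these set-theoretic statements to the quotient-map and homeomorphism assertions, and this is exactly where the finiteness and compactness hypotheses are used.

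First I would show that \emph{$D_{x}$ is compact}. By acyclicity, a nondegenerate $k$-simplex of $\nN(C\downarrow x)$ is a chain $y_{0}\to y_{1}\to\cdots\to y_{k}$ of non-identity morphisms in $C$ together with a morphism $y_{k}\to x$, and the non-identity condition forces a strict chain $y_{0}<y_{1}<\cdots<y_{k}\le x$ in $P(C)$. Since $C$ is locally finite, $P(C)_{\le x}$ is finite, so there are only finitely many such strict chains and $\nN_{k}(C\downarrow x)=\emptyset$ once $k>|P(C)_{\le x}|$. For each fixed chain the corresponding piece is a closed subspace of the finite product $C(y_{k},x)\times C(y_{k-1},y_{k})\times\cdots\times C(y_{0},y_{1})$ of compact Hausdorff spaces, hence compact; thus each $\nN_{k}(C\downarrow x)$ is a finite coproduct of compact spaces and is compact. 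Consequently $\coprod_{k}\nN_{k}(C\downarrow x)\times\DDelta^{k}$ is a finite union of compact spaces, and its quotient $D_{x}=\|\St_{\le x}(C)\|$ is compact. Therefore $\overline{e_{x}}=s_{x}(D_{x})$ is compact, and, $BC$ being Hausdorff, so is $\overline{e_{x}}$; a continuous surjection from a compact space onto a Hausdorff space is closed, so $s_{x}\colon D_{x}\to\overline{e_{x}}$ is a closed quotient map.

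Finally I would upgrade $s_{x}|_{D_{x}^{\circ}}\colon D_{x}^{\circ}\to e_{x}$ to a homeomorphism. By Lemma \ref{stratum_in_BC} this map is onto, and it is injective because a point of $e_{x}$ lifts to the unique chain of $C\downarrow x$ obtained by appending $1_{x}$ as in \eqref{diagram:adding_1_at_the_end}; thus it is a continuous bijection. The key point is that $s_{x}^{-1}(e_{x})=D_{x}^{\circ}$, equivalently $s_{x}(\partial D_{x})\cap e_{x}=\emptyset$: a point of $\partial D_{x}=\|\Lk_{<x}(C)\|$ is represented by a chain lying in $C_{<x}$ (Definition \ref{upper_and_lower_star}), whose image under the source functor is a chain in $C$ with top object $y<x$, so $\pi_{C}(s_{x}([\bm{u},\bm{t}]))=y\ne x$ and the image lies in $\partial e_{x}$. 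Now let $A\subset D_{x}^{\circ}$ be closed. Its closure $\overline{A}$ in the compact space $D_{x}$ satisfies $\overline{A}\setminus A\subset\partial D_{x}$, so $s_{x}(\overline{A}\setminus A)\subset\partial e_{x}$ is disjoint from $e_{x}$, whence $s_{x}(\overline{A})\cap e_{x}=s_{x}(A)$. Since $s_{x}$ is closed, $s_{x}(\overline{A})$ is closed in $\overline{e_{x}}$, so $s_{x}(A)=s_{x}(\overline{A})\cap e_{x}$ is closed in $e_{x}$. Thus $s_{x}|_{D_{x}^{\circ}}$ is a closed continuous bijection, hence a homeomorphism, and $s_{x}$ is a stellar structure.

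I expect the main obstacle to be the compactness of $D_{x}$ and the attendant closedness of $s_{x}$: this is the only place where the hypotheses genuinely enter, and it requires combining acyclicity (to bound the dimension of the nerve via strict chains), local finiteness (to make that bound finite and the relevant index set finite), and the compact Hausdorff condition on each $C(y,z)$ (to make the individual simplex spaces compact). Once $s_{x}$ is known to be closed, the quotient-map property and the homeomorphism on the interior follow formally from the boundary-tracking computation via Definition \ref{upper_and_lower_star}.
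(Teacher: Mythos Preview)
Your proof is correct and follows essentially the same line as the paper's: use Lemma~\ref{star_is_cone} for the aster structure, Lemma~\ref{stratum_in_BC} for surjectivity, compactness of $D_{x}$ from local finiteness plus compact morphism spaces, and then compact-to-Hausdorff to get the quotient map and the homeomorphism on the interior. The one point you assert without justification is that $BC$ is Hausdorff; this is not automatic for geometric realizations of simplicial \emph{spaces}, and the paper invokes a result of de~Seguins~Pazzis \cite{1005.2666} here.
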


\begin{proof}
 By Lemma \ref{star_is_cone}, $D_{x}$ is a cone on $\partial D_{x}$.
 By Lemma \ref{stratum_in_BC}, the image of the map $s_{x}$ is
 $\overline{\pi_{C}^{-1}(x)}$.
 By the compactness of each $C(x,y)$ and the finiteness of $P(C)({<x})$,
 $D_{x}$ is compact. By a result of de Seguins Pazzis
 \cite{1005.2666}, $BC$ is Hausdorff. Hence
 $s_{x}:D_{x}\to\overline{\pi_{C}^{-1}(x)}$ is a quotient 
 map.

 The fact that the
 restriction of $s_{x}$ to $D_{x}^{\circ}$ is 
 a homeomorphism onto $\pi_{C}^{-1}(x)$ follows from the description
 (\ref{equation:image_of_sx}). In fact, the inverse to
 $s_{x}|_{D_{x}^{\circ}}$ is given by assigning
 (\ref{diagram:adding_1_at_the_end}) to
 $(u_{k},\ldots,u_{1})\in\overline{N}_{k}(C)$ with $t(u_{k})=x$.
\end{proof}

The stratification we have defined on $BC$ fits well into the face
category of CNSSS.

\begin{proposition}
 \label{stratification_on_BC(X)}
 For a CW CNSSS $X$, the embedding 
 \[
 i_{X}: BC(X) \hookrightarrow X
 \]
 in Theorem \ref{Salvetti} is a morphism of stratified spaces
 when $BC(X)$ is equipped with the unstable stratification.
 If each parameter space of $X$ is compact Hausdorff, $i_{X}$ is a
 morphism of stellar stratified spaces.

 In particular, if $X$ is a CW stellar complex, $i_{X}$ is an
 isomorphism of stellar stratified spaces. 
\end{proposition}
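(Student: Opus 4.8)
The plan is to treat the three assertions in turn, the organising principle being that the face poset of $BC(X)$ for the unstable stratification is exactly $C(X)_{0}=P(X)$, the set of strata of $X$. Consequently the only possible poset morphism $P(i_{X})$ is $\id_{P(X)}$, and to prove the first assertion it suffices to check the commutativity $\pi_{X}\circ i_{X}=\pi_{C(X)}$. I would read off this commutativity from the construction of $i_{X}$ recalled in Theorem \ref{Salvetti}: a point of $BC(X)$ is represented by a pair $(\bm{u},\bm{t})$ with $\bm{u}\in\nN_{k}(C(X))$ a nondegenerate chain and $\bm{t}$ interior, so that $\pi_{C(X)}([\bm{u},\bm{t}])$ is the target $\lambda=t(\bm{u})$ of the top morphism; since the construction feeds that top morphism (whose target is $\lambda$) into the characteristic map $\varphi_{\lambda}$, its image lands in $e_{\lambda}=\pi_{X}^{-1}(\lambda)$. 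Equivalently, using Lemma \ref{stratum_in_BC} one has $\pi_{C(X)}^{-1}(\lambda)=s_{\lambda}(D_{\lambda}^{\circ})$ and checks $i_{X}(s_{\lambda}(D_{\lambda}^{\circ}))\subseteq e_{\lambda}$. This gives the first assertion.

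For the stellar refinement I would produce, for each $\lambda\in P(X)$, a map $f_{\lambda}$ from the domain $\|\St_{\le\lambda}(C(X))\|$ of the stellar structure $s_{\lambda}$ on $BC(X)$ to the aster $D_{\lambda}$ of the characteristic map $\varphi_{\lambda}$, subject to $\varphi_{\lambda}\circ f_{\lambda}=i_{X}\circ s_{\lambda}$ (the index matches since $P(i_{X})=\id$). On interiors this is forced and automatically a homeomorphism: $s_{\lambda}$ restricts to a homeomorphism onto $\pi_{C(X)}^{-1}(\lambda)$, $i_{X}$ carries this onto $e_{\lambda}$ by the first step, and $\varphi_{\lambda}|_{\inte(D_{\lambda})}$ is a homeomorphism onto $e_{\lambda}$, so I would set $f_{\lambda}=\varphi_{\lambda}^{-1}\circ i_{X}\circ s_{\lambda}$ there. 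The substance is the continuous extension across the boundary: using the cone description $\|\St_{\le\lambda}(C(X))\|=\partial D_{\lambda}\star\{1_{\lambda}\}$ of Lemma \ref{star_is_cone}, I would send the cone point to the centre of $D_{\lambda}$ and a boundary chain terminating at some $\mu<\lambda$ into $\partial D_{\lambda}$ via the cylindrical map $b_{\mu,\lambda}$ applied to the corresponding parameter. The compact Hausdorff hypothesis on the parameter spaces is used precisely here, exactly as in the lemma preceding this proposition: it makes $\|\St_{\le\lambda}(C(X))\|$ compact and, together with Hausdorffness of $BC(X)$ via \cite{1005.2666}, forces the interior bijection to extend to a genuine (quotient) continuous map $f_{\lambda}$.

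When $X$ is a CW stellar complex, the corollary following Theorem \ref{Salvetti} already supplies that $i_{X}$ is a homeomorphism, so it remains only to see that the $f_{\lambda}$ are homeomorphisms. Thinness of every stratum turns each aster $D_{\lambda}$ into an honest cone $\{*\}\star\partial D_{\lambda}$, matching the cone $\partial D_{\lambda}\star\{1_{\lambda}\}$ on the $BC(X)$ side. Arguing by downward induction on $P(X)$, the boundary restriction of $f_{\lambda}$ is a homeomorphism because it is assembled from the maps $b_{\mu,\lambda}$ and the already-known lower-dimensional $f_{\mu}$; coning off then makes $f_{\lambda}$ itself a homeomorphism, and combined with $i_{X}$ being a homeomorphism this upgrades the morphism of the previous paragraph to an isomorphism of stellar stratified spaces.

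I expect the boundary extension of $f_{\lambda}$ in the second step to be the main obstacle. The delicate point is to reconcile the cone coordinate on $\|\St_{\le\lambda}(C(X))\|$ coming from Lemma \ref{star_is_cone} with the aster coordinate on $D_{\lambda}$ built from the gluing maps $b_{\mu,\lambda}$ and the composition maps $c_{\lambda_{0},\lambda_{1},\lambda_{2}}$; carrying this out cleanly forces a simultaneous, careful use of all three coherence diagrams of Definition \ref{definition:cylindrical_normality}, and it is the only place where compactness and Hausdorffness are genuinely needed.
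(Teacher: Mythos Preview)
Your proposal is correct and is, in essence, exactly what the paper's one-line proof (``this follows immediately from the explicit definitions of $i_{X}$ and the unstable stratification'') is pointing at: identify $P(i_{X})=\id_{P(X)}$, verify $\pi_{X}\circ i_{X}=\pi_{C(X)}$ by tracing a representative $(\bm{u},\bm{t})$, and read the stratum-wise maps $f_{\lambda}$ off the construction of $i_{X}$.

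The difference is purely one of presentation. The paper treats the result as a corollary of the \emph{form} of the embedding from \cite{1609.04500}: that construction already assembles $i_{X}$ cell by cell using the characteristic maps $\varphi_{\lambda}$ and the cylindrical data $b_{\mu,\lambda}$, so the maps you call $f_{\lambda}$ are present from the outset and there is nothing further to extend or check. You, not having that construction in front of you, rebuild the $f_{\lambda}$ from scratch via Lemma~\ref{star_is_cone} and the coherence diagrams of Definition~\ref{definition:cylindrical_normality}; this is more work but arrives at the same place. One small caution: the sentence in which you say compactness ``forces the interior bijection to extend to a genuine continuous map $f_{\lambda}$'' is phrased a bit loosely---compactness does not by itself manufacture an extension; rather, you first \emph{define} the extension using $b_{\mu,\lambda}$ and then use compact/Hausdorff to conclude the resulting map is closed (hence a quotient onto its image). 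With that rewording your argument is complete.
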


\begin{proof}
 This follows immediately from the explicit definitions of $i_{X}$ and the
 unstable stratification on $BC(X)$. Note that the closure finiteness
 implies that $C(X)$ is locally finite.
\end{proof}

The next task is to find a cylindrical structure, in the sense of
Definition \ref{definition:cylindrical_normality}, on the stellar
stratification on $BC$ we have constructed.

\begin{definition}
 Let $C$ be an acyclic topological category.
 For $x,y\in C_{0}$ with $x<y$, a morphism $u:x\to y$ induces a functor 
 \[
  u\circ (-) : C\downarrow x \rarrow{} C\downarrow y.
 \]
 The induced map on the classifying spaces is denoted by
 \[
  b_{x,y} : C(x,y)\times D_{x} \rarrow{} D_{y}.
 \]
\end{definition}

We are ready to prove Theorem \ref{theorem:stratification}.

\begin{proof}[Proof of Theorem \ref{theorem:stratification}]
 It remains to show that maps
 $b_{x,y}: C(x,y)\times D_{x}^{\circ}\to D_{y}$ define
 a cylindrical structure on the unstable stratification on $BC$.

 The fact that each $b_{x,y}:C(x,y)\times D_{x}^{\circ}\to D_{y}$ is an
 embedding follows from the acyclicity of $C$.
 The associativity of compositions of morphisms in $C$ implies that the
 commutativity of three diagrams in Definition
 \ref{definition:cylindrical_normality}.
 By definition, $e_{x}\subset\partial e_{y}$ if and only if $x\neq y$
 and $C(x,y)\neq\emptyset$ and we have
 \[
 D_{y}= \bigcup_{e_{x}\subset\partial e_{y}}
 b_{x,y}(C(x,y)\times D_{x}^{\circ}).
 \]
\end{proof}

Theorem \ref{theorem:equivalence} is a corollary to the above argument.

\begin{proof}[Proof of Theorem \ref{theorem:equivalence}]
 By Lemma \ref{star_is_cone}, the structure of CNSSS on $BC$ constructed
 in Theorem \ref{theorem:stratification} is actually a stellar complex.
 It remains to verify that it is CW.
 The closure finiteness follows from the locally finiteness of $C$.
 For the weak topology, consider the commutative diagram
 \[
  \begin{diagram}
   \node{\coprod_{k\ge 0} \nN_{k}(C)\times\DDelta^{k}} \arrow{s,l}{p_{C}}
   \arrow{e,=} \node{\coprod_{x\in C_{0}}\coprod_{k}
   t_{k}^{-1}(x)\times\DDelta^{k}} \arrow{s,r}{\coprod_{x\in C_{0}}
   p_{D_{x}}} \\  
   \node{BC} \node{\coprod_{x\in C_{0}} D_{x},}
   \arrow{w,b}{\bigcup_{x\in C_{0}} s_{x}} 
  \end{diagram}
 \]
 where $t_{k}:\nN_{k}(C)\to C_{0}$ is given by the target map.
 Since both $p_{C}$ and $\coprod_{x\in C_{0}} p_{D_{x}}$ are quotient
 maps, it follows that $\bigcup_{x\in C_{0}} s_{x}$ is a quotient map.
\end{proof}
\subsection{The Exit-Path Category}
\label{exit-path}

In this section, we prove Theorem \ref{theorem:conically_stratified}.
Let us first recall the definition of the exit-path category.

\begin{definition}
 \label{definition:exit-path_category}
 For a stratified space $\pi:X\to P(X)$, define 
 \begin{align*}
 \Exit(X) & = \set{\sigma\in
  \Sing(X)}{\sigma:\DDelta^n\to X \text{ stratification
  preserving}} \\
  & = \set{\sigma:\DDelta^n\to X}{\hspace*{10pt}
   \begin{minipage}{90pt}
    $\begin{diagram}
      \node{\DDelta^n} \arrow{s,l}{\pi_{[n]}} \arrow{e,t}{\sigma}
      \node{X} \arrow{s,r}{\pi_{X}} \\
      \node{[n]} \arrow{e,b}{\exists} \node{P(X),}
     \end{diagram}$
   \end{minipage}
   }
 \end{align*}
 where $\Sing(X)$ is the singular simplicial set of $X$ and $\pi_{[n]}$
 is the stratification in Example
 \ref{example:max_stratification_on_simplex}. 
 When $\Exit(X)$ is a quasi-category, it is called the
 \emph{exit-path $\infty$-category of $X$}.
\end{definition}

The following useful criterion can be found in Lurie's book
\cite{LurieHigherAlgebra}.

\begin{definition}[Definition A.5.5 of \cite{LurieHigherAlgebra}]
 \label{definition:conically_stratified}
 A stratified space $\pi: X\to P(X)$ is called \emph{conically stratified}
 if, for each $x \in X$, there exists a $P(X)_{>\pi(x)}$-stratified
 space $Y$, a topological space $Z$, and an open embedding
 $Z\times\cone^{\circ}(Y)\to X$ of $P(X)$-stratified space whose image
 contains $x$, where $P(X)_{>\pi(x)}$ is the full subposet of $P(X)$
 consisting of elements $\lambda$ with $\lambda>\pi(x)$. 
\end{definition}

\begin{theorem}[Theorem A.6.4 (1) of \cite{LurieHigherAlgebra}]
 \label{theorem:conical_implies_inner_fibration}
 If a stratified space $\pi: X\to P(X)$ is conically stratified, the
 the map $p_{\pi}:\Exit(X)\to N(P(X))$ induced by $\pi$ is an inner
 fibration.
 In particular, $\Exit(X)$ is a quasi-category.
\end{theorem}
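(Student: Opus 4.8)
The plan is to show directly that $p_{\pi}\colon \Exit(X)\to N(P(X))$ is an inner fibration; the concluding sentence is then formal. Indeed, $N(P(X))$ is the nerve of a poset, hence a quasi-category (nerves of categories have inner horn fillers), and the source of an inner fibration over a quasi-category is again a quasi-category: given an inner horn $\Lambda^n_i\to\Exit(X)$, push it forward along $p_\pi$, fill it in $N(P(X))$, and then lift it using the inner fibration property. So it suffices to solve every lifting problem against an inner horn inclusion $\Lambda^n_i\hookrightarrow\Delta^n$ with $0<i<n$.

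First I would translate such a lifting problem into topology. A map $\Delta^n\to N(P(X))$ is a chain $p_0\le\cdots\le p_n$ in $P(X)$, i.e.\ a poset map $c\colon[n]\to P(X)$, which makes the geometric simplex $\DDelta^n$ into a $P(X)$-stratified space via $c\circ\pi_{[n]}$, with $\pi_{[n]}$ as in Example \ref{example:max_stratification_on_simplex}. By the adjunction defining $\Sing$, the compatible map $\Lambda^n_i\to\Exit(X)$ is precisely a stratification-preserving continuous map $f_0\colon\lvert\Lambda^n_i\rvert\to X$ from the corresponding topological horn, and a lift is exactly a stratification-preserving extension $f\colon\DDelta^n\to X$ of $f_0$ satisfying $\pi_X\circ f=c\circ\pi_{[n]}$. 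Thus everything reduces to one stratified extension problem across the inclusion of the topological horn into the closed simplex.

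The engine for solving it is conicality, applied locally. Let $x_0=f_0(v_0)$ be the image of the initial vertex, lying in the stratum $p_0$, and choose a conical chart $Z\times\cone^{\circ}(Y)\hookrightarrow X$ around $x_0$ with $Y$ stratified by $P(X)_{>p_0}$. The idea is to build the filler using the radial (cone) coordinate: points of $\DDelta^n$ near $v_0$ must sit in stratum $p_0$ (the cone-point times $Z$ direction), and moving away from $v_0$ exits into the higher strata recorded by $Y$, matching the combinatorics of $\pi_{[n]}$. Since $0<i<n$ forces $n\ge 2$, every vertex of $\DDelta^n$ and all of the stratum data needed to set the radial coordinate already live on $\lvert\Lambda^n_i\rvert$; one then cones the horn toward $v_0$ inside the chart, interpolating the radial parameter so that $\pi_X\circ f$ equals the prescribed chain. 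For an outer horn this coning would have to run against the exit direction, and the construction fails, which is exactly why innerness is essential.

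The main obstacle is genuinely point-set. A single conical chart only controls a neighborhood of $x_0$, whereas the horn spreads across all of $\DDelta^n$ and meets several strata. I would handle this by subdividing $\DDelta^n$, using compactness of the closed simplex together with a Lebesgue-number argument for the open cover of $X$ by conical charts, and then extend cell by cell: at each step one invokes the local cone structure of the relevant chart and uses the values already constructed on lower-dimensional faces as boundary data, finally checking continuity and compatibility on overlaps. The delicate point is verifying that the radial interpolation is not merely continuous but \emph{genuinely stratification-preserving}, so that $\pi_X\circ f=c\circ\pi_{[n]}$ holds on the nose; this is precisely the content isolated in the local analysis of conically stratified spaces in \cite{LurieHigherAlgebra}, and is where I expect the real work to lie.
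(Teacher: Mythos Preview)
The paper does not prove this theorem at all: it is quoted verbatim from Lurie's \emph{Higher Algebra} (Theorem~A.6.4(1)) and used as a black box to deduce Corollary~\ref{corollary:exit-path_CNCW} and Corollary~\ref{corollary:exit-path_for_stellar_complex}. So there is nothing in the paper to compare your argument against; you are attempting to reprove Lurie's result, which the authors explicitly chose not to do.

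That said, your outline is broadly the right one and matches the shape of Lurie's argument: reduce the lifting problem to a stratified extension problem $\lvert\Lambda^n_i\rvert\hookrightarrow\DDelta^n\to X$, work in a conical chart near the image of the initial vertex, and use the cone coordinate to interpolate. Your honest disclaimer at the end is accurate: the subdivision/Lebesgue-number step and the verification that the extension is stratification-preserving on the nose are where all the genuine content lies, and your sketch does not actually carry them out. Lurie's proof handles this by an induction on $n$ together with a careful homotopy through exit paths inside the conical chart (Lemma~A.6.3 and the surrounding analysis), and the details are not short. As written, your proposal is a plausible plan rather than a proof; for the purposes of this paper, simply citing Lurie is what is expected.
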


Thanks to this theorem, it suffices to show that the unstable
stratification on $BC$ is conically stratified in order to prove Theorem
\ref{theorem:conically_stratified}. 
This is done by combining with the dual stratification,
i.e.\ the stable stratification on $BC$.
Namely there exists a map
\begin{equation}
 c_{x} : D_{x}\times D_{x}^{\op} \rarrow{} BC
 \label{conical_nbh} 
\end{equation}
for each object $x$ in $C$ such that the restrictions
$c_{x}|_{D_{x}\times \{1_{x}\}}$ and 
$c_{x}|_{\{1_{x}\}\times D_{x}^{\op}}$ coincide with $s_{x}$ and
$t_{x}$, respectively. 
The map $c_{x}$ is defined by the composition
\[
 D_{x}\times D_{x}^{\op} \rarrow{1_{D_{x}}\times h_{x}^{\op}}
 D_{x}\times (\{1_{x}\}\star \partial 
 D_{x}^{\op}) \rarrow{j_{x}} D_{x} \star \partial D_{x}^{\op}
 \rarrow{n_{x}} BC, 
\]
where $h_{x}^{\op}$ is the map defined in the proof of Lemma
\ref{star_is_cone} and the map $j_{x}$ is given by
$j_{x}(\bm{a},(1-t)1_{x}+t\bm{b})=(1-t)\bm{a}+t\bm{b}$.
Let us define $n_{x}$.

\begin{definition}
 For $[\bm{u},\bm{s}]\in D_{x}$, choose a representative with
 $\bm{u}\in \nN_{p}(C\downarrow x)$ and $\bm{s}\in \inte\DDelta^{p}$.
 The nondegenerate $p$-chain
 \[
 \bm{u}:y_{0}\rarrow{u_1} y_{1}\rarrow{u_2} \cdots \rarrow{u_{p}}
 y_{p}\rarrow{u_{p+1}} x
 \]
 in $C\downarrow x$ is regarded as a $(p+1)$-chain in $C$. Note that
 this may degenerate in $N_{p+1}(C)$.
 For $[\bm{v},\bm{t}]\in \partial D_{x}^{\op}$, choose a
 representative with $\bm{v}\in \nN_{q}(C_{>x})$ and
 $\bm{t}\in\inte\DDelta^{q}$. The nondegenerate $q$-chain
 \[
  \bm{v}: x\rarrow{v_{0}} z_{0} \rarrow{v_{1}} \cdots \rarrow{v_{q}} z_{q}
 \]
 in $x\downarrow C$ is also nondegenerate as an element of $N_{q+1}(C)$,
 since $v_{0}\neq 1_{x}$.

 Now define
 \[
 n_{x}((1-r)[\bm{u},\bm{s}]+r[\bm{v},\bm{t}]) =
 \begin{cases}
  [(v_{q},\ldots,v_{1},v_{0}\circ u_{p+1},u_{p},\ldots,u_{1}),
  (1-r)\bm{s}+r\bm{t}], & r>0 \\
  [(u_{p},\ldots,u_{1}),\bm{s}] =s_{x}([\bm{u},\bm{s}]), & r=0
 \end{cases} 
 \]
 under the identification of
 $\DDelta^{p}\star\DDelta^{q}\cong \DDelta^{p+q+1}$.
\end{definition}

\begin{lemma}
 \label{n_is_embedding}
 The map $n_{x}$ is an embedding onto its image.
\end{lemma}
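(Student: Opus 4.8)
The plan is to prove injectivity and the homeomorphism-onto-image property separately, using the uniqueness of nondegenerate representatives in the realization $BC=\|\nN(C)\|$ as the principal tool. First I would record that $n_x$ is well defined and continuous. The formula is compatible with the join identification $\DDelta^{p}\star\DDelta^{q}\cong\DDelta^{p+q+1}$ and with the degeneracy collapses defining $\|\nN(C)\|$, and the two branches $r>0$ and $r=0$ agree on their overlap: as $r\to 0^{+}$ the weights on the $z$-vertices tend to $0$, so applying the face operator that deletes $z_{0},\ldots,z_{q}$ also deletes the connecting morphism $v_{0}\circ u_{p+1}$ and leaves exactly $[(u_{p},\ldots,u_{1}),\bm{s}]=s_{x}([\bm{u},\bm{s}])$. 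Continuity is then a formal consequence of the fact that $n_x$ lifts to a continuous map on the disjoint union $\coprod\nN_{p}(C\downarrow x)\times\DDelta^{p}\times[0,1]\times\nN_{q}(C_{>x})\times\DDelta^{q}$ compatible with all the quotient identifications.

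The heart of the argument is injectivity. Given $\eta$ in the image, represent it uniquely as $\eta=[\bm{w},\bm{a}]$ with $\bm{w}$ a nondegenerate chain in $C$ and $\bm{a}\in\inte\DDelta^{N}$. Since $\eta$ lies in the image, $\bm{w}$ is totally ordered by the acyclic structure and every vertex is comparable to the fixed object $x$. I would therefore split the vertices of $\bm{w}$ according to $P(C)$: those $\le x$ form an initial segment $y_{0}<\cdots<y_{p}$ and those $>x$ form a final segment $z_{0}<\cdots<z_{q}$. This recovers $p$ and $q$, the subchains $(u_{1},\ldots,u_{p})$ and $(v_{1},\ldots,v_{q})$, and — reading off the total $\bm{a}$-weight carried by the $z$-vertices — the parameter $r$, whence $\bm{s}$ and $\bm{t}$ by renormalization. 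Thus any two preimages of $\eta$ share all of this data.

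The only remaining datum is the morphism bridging $y_{p}$ to the $z$-part, i.e.\ the recovery of $u_{p+1}$ and $v_{0}$. If $x$ itself occurs as a vertex of $\bm{w}$ then it must equal $y_{p}$, forcing $u_{p+1}=1_{x}$ and exhibiting $v_{0}$ directly as the connecting morphism; otherwise the connecting morphism is the composite $v_{0}\circ u_{p+1}\colon y_{p}\to z_{0}$ and one must factor it through $x$. Showing that this factorization is uniquely pinned down by the fixed object $x$ — so that $(u_{p+1},v_{0})$, and hence the original point $(1-r)[\bm{u},\bm{s}]+r[\bm{v},\bm{t}]$ of $D_{x}\star\partial D_{x}^{\op}$, is reconstructed — is the step I expect to be the main obstacle. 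This is precisely where acyclicity together with the comma-category descriptions $D_{x}=B(C\downarrow x)$ and $\partial D_{x}^{\op}=BC_{>x}$ must be used decisively; once it is in hand, the assignment $[\bm{w},\bm{a}]\mapsto(1-r)[\bm{u},\bm{s}]+r[\bm{v},\bm{t}]$ is a well-defined two-sided inverse on the image of $n_x$.

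Finally, to upgrade the continuous bijection onto its image to a homeomorphism I would invoke compactness. Under the standing finiteness hypotheses that $P(C)_{<x}$ and $P(C)_{>x}$ are finite, together with the assumption that each morphism space is compact Hausdorff, both $D_{x}=B(C\downarrow x)$ and $\partial D_{x}^{\op}=BC_{>x}$ are built from finitely many compact cells and are therefore compact, so the join $D_{x}\star\partial D_{x}^{\op}$ is compact. Since $BC$ is Hausdorff by de Seguins Pazzis \cite{1005.2666}, the continuous injection $n_x$ is automatically closed onto its image and hence an embedding. Alternatively, one can verify directly that the inverse constructed above is continuous, using the weak topology on $BC$ determined by the cells $p_{C}(C(x_{k-1},x_{k})\times\cdots\times C(x_{0},x_{1})\times\DDelta^{k})$, but the compactness route is shorter given the hypotheses already in force.
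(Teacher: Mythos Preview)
Your overall strategy and the closing compactness-versus-Hausdorff step match the paper's. The gap is exactly where you place it, and it is not a gap you can close: the factorization step fails, and indeed the lemma as stated is false. Take $C$ with three objects $a<x<b$ in $P(C)$ and two distinct morphisms $f,g\colon a\to x$. Then $(a,f)$ and $(a,g)$ are distinct vertices of $\partial D_{x}\subset D_{x}\star\partial D_{x}^{\op}$, yet $n_{x}$, which agrees with $s_{x}$ on the $r=0$ slice $D_{x}$, sends both to the single vertex $a\in BC$; here there is no ``connecting morphism'' at all from which to recover $u_{p+1}$. If moreover there is $h\colon x\to b$ with $h\circ f=h\circ g$, then even for $0<r<1$ the two join segments from $(a,f)$ and $(a,g)$ to $h$ have the same image $[(h\circ f),(1-r,r)]$. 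Acyclicity says nothing about uniqueness of factorizations through $x$, so there is no ``decisive use'' of it to be found.

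The paper's own proof has the same blind spot: it asserts that $n_{x}$ carries the cells of $D_{x}\star\partial D_{x}^{\op}$ bijectively onto the cells of $BC$ ``containing $x$ as a vertex'', but the cells of the domain are indexed by chains \emph{with a chosen factorization through $x$}, and two distinct choices of $u_{p+1}$ (or of the pair $(u_{p+1},v_{0})$) give the same cell in $BC$. What the subsequent corollary actually uses is only the restriction of $n_{x}$ to $D_{x}^{\circ}\star\partial D_{x}^{\op}\setminus\partial D_{x}^{\op}$, and there your reconstruction argument does go through: the interior representative of any $[\bm{u},\bm{s}]\in D_{x}^{\circ}$ necessarily has $y_{p}=x$ and $u_{p+1}=1_{x}$, so the image chain literally contains $x$ as a vertex and nothing is lost. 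Proving that restricted embedding (together with openness of its image) is what is needed for the conical-stratification corollary.
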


\begin{proof}
 Consider the simplicial stratification on $BC$. It is a cell
 decomposition of $BC$ indexed by $\overline{N}(C)$, when the topology
 of $C$ is discrete. Even when the topology of $C$ is not discrete, we
 call a stratum of the simplicial stratification a cell in $BC$.
 
 By definition, the image of $n_{x}$ is the union of cells whose
 boundary contains $x$ as a vertex.
 On the other hand, the cells in 
 $D_{x}\times C^{\circ}(\partial D_{x}^{\op})$ are in bijective
 correspondence with
 \[
 P(D_{x}) \cup P(D_{x})\times P(\partial D^{\op}_{x}) \cup P(\partial
 D^{\op}_{x}) = \nN(C\downarrow x) \cup \nN(C\downarrow x)\times
 \nN(C_{>x}) \cup \nN(C_{>x}).
 \]
 The set on the right hand side is the set of nondegenerate chains in $C$
 which contains $x$ or factors through $x$.
 
 Since $n_x$ maps a simplex to a simplex homeomorphically, $n_{x}$ is a
 bijection onto the stratified subspace of $BC$ consisting of cells
 which contain $x$ as a vertex.
 By assumption, $D_{x}\star\partial D_{x}^{\op}$ is compact and $BC$ is
 Hausdorff. And $n_{x}$ is an embedding onto its image.
\end{proof}

\begin{lemma}
 \label{restriction_of_c}
 The restrictions of $c_{x}$ to $D_{x}\times \{1_{x}\}$ and
 $\{1_{x}\}\times D_{x}^{\op}$ coincides with $s_{x}$ and $t_{x}$,
 respectively.
\end{lemma}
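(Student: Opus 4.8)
The plan is to unwind the definition of $c_{x}$ as the composite $n_{x}\circ j_{x}\circ(1_{D_{x}}\times h_{x}^{\op})$ and evaluate it on each of the two slices. For the slice $D_{x}\times\{1_{x}\}$ the computation is essentially forced by the definitions. The cone homeomorphism $h_{x}^{\op}$ constructed in the proof of Lemma \ref{star_is_cone} sends the cone point $1_{x}\in D_{x}^{\op}$ to the cone vertex of $\{1_{x}\}\star\partial D_{x}^{\op}$, i.e.\ to the point with join parameter $0$ on the $\{1_{x}\}$ side. Applying $j_{x}$ to $(\bm{a},1_{x})$ then yields $(1-0)\bm{a}+0\cdot\bm{b}=\bm{a}$, a point of $D_{x}\star\partial D_{x}^{\op}$ lying in the $D_{x}$-factor (join parameter $r=0$). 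Finally, the clause $r=0$ in the definition of $n_{x}$ reads $n_{x}(\bm{a})=s_{x}(\bm{a})$ verbatim, so $c_{x}|_{D_{x}\times\{1_{x}\}}=s_{x}$ with no further work.

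For the slice $\{1_{x}\}\times D_{x}^{\op}$, I would first observe that $j_{x}(1_{x},-)$ carries $\{1_{x}\}\star\partial D_{x}^{\op}$ into $D_{x}\star\partial D_{x}^{\op}$ by sending the cone vertex to the cone point $1_{x}\in D_{x}$ and restricting to the identity on $\partial D_{x}^{\op}$. Thus $c_{x}|_{\{1_{x}\}\times D_{x}^{\op}}$ factors as $n_{x}\circ\iota\circ h_{x}^{\op}$, where $\iota\colon\{1_{x}\}\star\partial D_{x}^{\op}\hookrightarrow D_{x}\star\partial D_{x}^{\op}$ is this inclusion. Since $h_{x}^{\op}\colon D_{x}^{\op}\to\{1_{x}\}\star\partial D_{x}^{\op}$ is a homeomorphism, it suffices to prove the identity of maps $n_{x}\circ\iota=t_{x}\circ(h_{x}^{\op})^{-1}$ on $\{1_{x}\}\star\partial D_{x}^{\op}$.

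To check this I would evaluate both sides on a point $(1-r)1_{x}+r[\bm{v},\bm{\tau}]$, where $\bm{v}\in\nN_{q}(C_{>x})$ corresponds, via the dual of Lemma \ref{link}, to a chain $x\xrightarrow{v_{0}}z_{0}\xrightarrow{v_{1}}\cdots\xrightarrow{v_{q}}z_{q}$ in $C$ with $v_{0}\neq 1_{x}$, and $\bm{\tau}\in\inte\DDelta^{q}$. On the left, the cone point of $D_{x}$ is represented by the $0$-chain $[1_{x},\ast]$, so in the formula for $n_{x}$ one has $p=0$ and $u_{p+1}=u_{1}=1_{x}$; the degenerate product $v_{0}\circ u_{p+1}$ collapses to $v_{0}$ and there are no remaining $u_{p},\dots,u_{1}$ entries, giving $n_{x}((1-r)1_{x}+r[\bm{v},\bm{\tau}])=[(v_{q},\dots,v_{1},v_{0}),(1-r)\ast+r\bm{\tau}]$ under $\DDelta^{0}\star\DDelta^{q}\cong\DDelta^{q+1}$. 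On the right, $(h_{x}^{\op})^{-1}$ prepends the cone object $1_{x}\in x\downarrow C$ to the chain, and the target functor inducing $t_{x}$ sends $1_{x}\to(x\to z_{0})\to\cdots\to(x\to z_{q})$ to $x\to z_{0}\to\cdots\to z_{q}$, i.e.\ to the same chain $(v_{q},\dots,v_{0})$ carrying the same simplicial coordinate. Hence the two maps agree and $c_{x}|_{\{1_{x}\}\times D_{x}^{\op}}=t_{x}$.

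The routine ingredients are the two boundary conventions (cone vertex $\leftrightarrow$ join parameter $0$) and the collapse $v_{0}\circ 1_{x}=v_{0}$. The one step that I expect to require genuine care is matching the simplicial coordinates under the identification $\DDelta^{0}\star\DDelta^{q}\cong\DDelta^{q+1}$ and confirming that the prepending of the cone object performed by $(h_{x}^{\op})^{-1}$ is inverted on the nose by the target functor defining $t_{x}$, so that the two sides produce literally the same nondegenerate chain in $C$ rather than merely equivalent representatives in $BC$. This bookkeeping, together with the continuity of all maps involved, yields the asserted equalities.
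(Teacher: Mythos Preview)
Your proposal is correct and follows essentially the same approach as the paper's proof: reduce to the behavior of $n_{x}$ on the two distinguished subspaces, note that $n_{x}|_{D_{x}}=s_{x}$ holds by definition (the $r=0$ clause), and for the other slice verify $n_{x}\circ\iota=t_{x}\circ(h_{x}^{\op})^{-1}$ by the explicit computation $n_{x}((1-r)1_{x}+r[\bm{v},\bm{\tau}])=[(v_{q},\ldots,v_{1},v_{0}\circ 1_{x}),(1-r)\ast+r\bm{\tau}]$. The paper records exactly this computation, just more tersely and phrased as a commutative square rather than the equivalent equation you wrote.
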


\begin{proof}
 It suffices to show that the restrictions of $n_{x}$ to $D_{x}$ and
 $\{1_{x}\}\star \partial D_{x}^{\op}$ can be identified with $s_{x}$
 and $t_{x}$, respectively.

 By the very definition, $n_{x}$ agrees with $s_{x}$ when
 restricted to $D_{x}$.
 It remains to verify the commutativity of the following diagram
 \[
 \xymatrix{
 D_{x}\star \partial D_{x}^{\op} \ar[r]^(0.6){n_{x}} & BC \\
 \{1_{x}\}\star\partial D_{x}^{\op} \ar[u] &
 D_{x}^{\op}. \ar[l]^(0.3){h_{x}^{\op}} 
 \ar[u]_{t_{x}} 
 }
 \]
 For $[\bm{v},\bm{t}]\in\partial D_{x}^{\op}$, we have
 \[
 n_{x}((1-r)1_{x}+r[\bm{v},\bm{t}]) =
 \begin{cases}
  [(v_{0}\circ 1_{x},v_1,\ldots,v_{q}),[(1-r)1+r\bm{t}]], & r>0 \\
  [(1_{x},1)], & r=0,
 \end{cases}
 \]
 which agrees with $t_{x}$ under $h_{x}^{\op}$.
\end{proof}

\begin{corollary}
 The restriction of $c_{x}$ to
 $D_{x}^{\circ}\times D_{x}^{\op,\circ}$ is an open embedding.
 Hence $BC$ is conically stratified.
\end{corollary}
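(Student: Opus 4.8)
The plan is to compute the image $c_x(D_x^{\circ}\times D_x^{\op,\circ})$ explicitly, recognize it as the open star of the vertex $x$ in $BC$, and then read off both the open embedding property and the conical chart. First I would pin down which chains occur. By Lemma~\ref{star_is_cone} a point of the open cone $D_x^{\circ}$ admits a representative whose terminal structure morphism $u_{p+1}$ is the identity $1_{x}$, so that its last vertex equals $x$; the cone coordinate is exactly the barycentric coordinate of this vertex, and $D_x^{\circ}$ is cut out by its positivity. Pairing such a representative with a point of $D_x^{\op,\circ}$, whose defining chain $\bm v$ has $v_{0}\neq 1_{x}$ and whose cone parameter satisfies $r<1$, the composite $v_{0}\circ u_{p+1}=v_{0}$ no longer absorbs $x$. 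The chain produced by $n_x$ is therefore
\[
 y_{0}\rarrow{u_1}\cdots\rarrow{u_p} x\rarrow{v_0} z_{0}\to\cdots\rarrow{v_q} z_{q},
\]
which contains $x$ as an honest vertex and is nondegenerate because $C$ is acyclic. Tracking the two cone parameters shows that the barycentric coordinate of $x$ in the combined simplex equals $(1-r)$ times the cone coordinate of $\bm a$, so the constraints $\bm a\in D_x^{\circ}$ and $\bm c\in D_x^{\op,\circ}$ are together equivalent to this coordinate being positive. Hence $U_{x}:=c_x(D_x^{\circ}\times D_x^{\op,\circ})$ is precisely the open star of $x$: the set of $[\bm w,\bm t]$ for which $x$ is a vertex of $\bm w$ with positive coordinate.

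Next I would deduce injectivity and openness from this description. Acyclicity forces every nondegenerate chain to be strictly increasing, so $x$ occurs at most once as a vertex; splitting a chain of $U_{x}$ at this unique occurrence recovers $(\bm a,\bm c)$ and furnishes a continuous inverse, so $c_x$ is injective. Since $n_x$ is an embedding by Lemma~\ref{n_is_embedding}, the restriction is a homeomorphism onto $U_{x}$. For openness, the complement of the open star is the union of the closed cells indexed by chains that avoid $x$ altogether; this is a subcomplex, hence closed, so $U_{x}$ is open in $BC$ and $c_x$ is an open embedding.

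Finally I would assemble the conical chart. Put $Z=D_x^{\circ}$ and $Y=\partial D_x^{\op}=BC_{>x}$, which is stratified by $P(C)_{>x}$, so that $D_x^{\op,\circ}=\cone^{\circ}(Y)$. By Lemma~\ref{restriction_of_c} the cone point $1_{x}$ is sent by $c_x$ into the stratum $e_{x}$ through $s_x$, while the parameter $r>0$ moves into strata indexed by $P(C)_{>x}$; thus $c_x$ is a stratification-preserving open embedding $Z\times\cone^{\circ}(Y)\hookrightarrow BC$. Any point $p\in BC$ lies in some stratum $e_{y}$, and the chart for $x=y$ contains it because $e_{y}=s_y(D_y^{\circ})\subset U_{y}$. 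Therefore the unstable stratification on $BC$ is conically stratified, and Theorem~\ref{theorem:conical_implies_inner_fibration} shows that $\Exit(BC)$ is a quasi-category.

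I expect the first step to be the main obstacle: one must verify that passing to the \emph{open} cone $D_x^{\circ}$ genuinely forces $u_{p+1}=1_{x}$, so that $x$ persists as a vertex instead of being composed away. This is the crux, since the larger collection of chains that merely factor through $x$ (for instance the diagonal of a ``diamond'' poset) fails to be open, and it is exactly the open-cone condition that selects the open star.
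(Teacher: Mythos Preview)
Your argument is correct and follows the same route as the paper: both use the factorization $c_{x}=n_{x}\circ j_{x}\circ(1\times h_{x}^{\op})$, identify the image of $D_{x}^{\circ}\times D_{x}^{\op,\circ}$ under $j_{x}\circ(1\times h_{x}^{\op})$ with $D_{x}^{\circ}\times\cone^{\circ}(\partial D_{x}^{\op})$ inside $D_{x}\star\partial D_{x}^{\op}$, and invoke Lemma~\ref{n_is_embedding}. Your additional step --- recognizing the image in $BC$ as the open star of $x$ and arguing that its complement is a subcomplex --- is a genuine improvement, since the paper's proof asserts that ``these neighborhoods cover $BC$'' but never explicitly verifies that each $c_{x}(D_{x}^{\circ}\times D_{x}^{\op,\circ})$ is \emph{open} in $BC$, which is what Definition~\ref{definition:conically_stratified} requires.
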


\begin{proof}
 The image of
 $D_{x}^{\circ}\times D_{x}^{\op,\circ}$ under
 $j_{x}\circ(1_{D_{x}}\times h_{x})$ is
 \[
 D_{x}^{\circ}\star \partial D_{x}^{\op} \setminus \partial D_{x}^{\op}
 \cong D_{x}^{\circ}\times \cone^{\circ}(\partial D_{x}^{\op})
 \]
 and restriction of $j_{x}\circ(1_{D_{x}}\times h_{x})$ to
 $D_{x}^{\circ}\times D_{x}^{\op,\circ}$ is a homeomorphism onto its
 image.
 These neighborhoods cover $BC$ and hence $BC$ is conically stratified.
\end{proof}

By Proposition \ref{stratification_on_BC(X)}, we may replace $X$ by
$BC(X)$ when $X$ is a CW stellar complex.

\begin{corollary}
 \label{corollary:exit-path_for_stellar_complex}
 For any cylindrically normal CW stellar complex $X$, $\Exit(X)$ is a
 quasi-category. 
\end{corollary}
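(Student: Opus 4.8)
The plan is to transport the conical structure already constructed on classifying spaces across the identification $X \cong BC(X)$. Write $C = C(X)$ for the face category of $X$; by Lemma~\ref{face_category_of_CNCSS} it is an acyclic top-enriched category whose underlying poset is $P(X)$, so it is exactly the kind of category treated in \S\ref{stellar_stratification_on_BC}. The first step is to observe that $C$ meets the hypotheses of Theorem~\ref{theorem:conically_stratified}: the closure finiteness built into the CW condition (Definition~\ref{definition:CW}) forces $P(C)_{<x}=P(X)_{<x}$ to be finite for each $x$, so $C$ is locally finite; the parameter spaces of $X$ are precisely the morphism spaces of $C$ and are compact Hausdorff; and $P(C)_{>x}$ is finite. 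With these in hand, Theorem~\ref{theorem:conically_stratified} applies verbatim and shows that $BC$, equipped with the unstable stratification, is conically stratified.

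Next I would invoke Proposition~\ref{stratification_on_BC(X)}. Since $X$ is a CW stellar complex, the canonical embedding $i_X : BC(X) \hookrightarrow X$ of Theorem~\ref{Salvetti} is an \emph{isomorphism} of stellar stratified spaces; in particular it is a homeomorphism carrying the unstable stratification on $BC(X)$ onto the given stratification on $X$. Conical stratification, as formulated in Definition~\ref{definition:conically_stratified}, is a property of the stratifying map alone: an open embedding $Z \times \cone^{\circ}(Y) \to BC(X)$ of $P(X)$-stratified spaces whose image contains a prescribed point composes with $i_X$ to give an open embedding $Z \times \cone^{\circ}(Y) \to X$ of $P(X)$-stratified spaces with the same property. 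Thus conical stratification is preserved along $i_X$, and $X$ is itself conically stratified.

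Finally I would apply Lurie's criterion, recorded here as Theorem~\ref{theorem:conical_implies_inner_fibration}: for a conically stratified space the induced map $\Exit(X) \to N(P(X))$ is an inner fibration, and in particular $\Exit(X)$ is a quasi-category. This yields the corollary.

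The only substantive point is the first step, namely confirming that the face category $C(X)$ satisfies the standing finiteness and compactness conditions of Theorem~\ref{theorem:conically_stratified}. Local finiteness and compact Hausdorffness of the parameter spaces are immediate from the CW and stellar hypotheses, so the finiteness of $P(C)_{>x}$ — which is \emph{not} guaranteed by closure finiteness alone and requires the complex to be finite — is the condition to watch. Once it is secured, the remaining two steps are a purely formal transport of the conical charts along the isomorphism $i_X$ together with a direct citation, so I expect no further difficulty.
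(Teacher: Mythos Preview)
Your approach matches the paper's one-line argument exactly: invoke Proposition~\ref{stratification_on_BC(X)} to replace $X$ by $BC(X)$ and inherit the conical structure already established for classifying spaces. You are right that finiteness of $P(C)_{>x}$ is the delicate hypothesis---it is genuinely used in Lemma~\ref{n_is_embedding}, the abstract and Corollary~\ref{corollary:exit-path_CNCW} both assume $X$ finite, and \S\ref{remarks} explicitly flags it---but one correction to your checklist: compact Hausdorffness of the parameter spaces is likewise \emph{not} automatic from the CW or stellar hypotheses (nothing in Definition~\ref{definition:cylindrical_normality} imposes it, and Proposition~\ref{stratification_on_BC(X)} and Theorem~\ref{theorem:stratification} assume it separately), so it belongs on the same watch list rather than being declared ``immediate.''
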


%
%

\subsection{Discrete Morse Theory}
\label{discrete_Morse_theory}

Robin Forman \cite{Forman95,Forman98-2} formulated an analogue of Morse
theory for regular cell complexes. For a discrete Morse function
$f:F(X)\to\R$ on the face poset of a finite regular cell complex $X$,
Forman constructed a CW complex $X_{f}$ whose cells are indexed by
critical cells of $f$ and showed that $X_{f}$ is homotopy equivalent to $X$.

Although Forman's discrete Morse theory has been shown to be useful, the
construction of $X_{f}$ is ad hoc. An explicit and functorial
construction would be more useful.
Such a construction was proposed in a joint work of the first author
with Vidit Nanda, Kohei Tanaka \cite{1612.08429}, in which a
poset-enriched category $C(f)$ was constructed from a discrete Morse
function $f:F(X)\to \R$.

For critical cells $c$ and $d$, the set of morphisms $C(f)(c,d)$ has a
structure of poset. By taking the classifying space $B(C(f)(c,d))$ of
each morphism poset, we obtain an acyclic topological category
$BC(f)$ whose set of objects is $C(f)_{0}$.
 
\begin{theorem}[\cite{1612.08429}]
 For a discrete Morse function $f$ on a regular CW complex $X$,
 The classifying space $B^2C(f)=B(BC(f))$ is homotopy equivalent to $X$.
\end{theorem}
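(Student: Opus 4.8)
The plan is to follow the ``classifying space approach'' to Morse theory of Cohen--Jones--Segal, adapted to the discrete setting, treating $B^2C(f)$ as a discrete flow-category model and comparing it to $X$ through the critical filtration. First I would unwind the construction of $C(f)$ from \cite{1612.08429}: its objects are the critical cells, and for critical cells $d$ and $c$ the morphism poset $C(f)(d,c)$ is the poset of gradient paths (alternating $V$-paths in the modified Hasse diagram of the matching) from $d$ to $c$, so that each classifying space $B(C(f)(d,c))$ is an explicit regular cell complex. The double classifying space $B^2C(f)=|N(BC(f))|$ is then the realization of the simplicial space whose $k$-simplices are the spaces of composable chains of such paths. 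Since $X$ is regular CW, I would also record the standard homeomorphism $X\cong BF(X)$, which lets me phrase the target combinatorially in terms of the face poset $F(X)$.

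The core of the argument is an inductive comparison along the Morse values. Ordering the critical cells $c_1,\dots,c_m$ so that $f$ is increasing, I would filter $X$ by the sublevel complexes $X_i$ and filter $B^2C(f)$ by the full subcategories on $\{c_1,\dots,c_i\}$. The engine is the standard fact that a matched pair $(\sigma,\tau)$, with $\sigma$ a free face of $\tau$, determines an elementary collapse and hence a homotopy equivalence; acyclicity of the matching guarantees these collapses can be performed coherently, with no directed cycles obstructing them. Carrying this out one critical value at a time, I would show that $X_i$ is obtained from $X_{i-1}$ by attaching a single cell of dimension $\dim c_i$, and that the corresponding inclusion $B^2C_{i-1}(f)\hookrightarrow B^2C_i(f)$ realizes the same cell attachment up to homotopy, the attaching map being read off from the gradient paths into $c_i$. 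Matching the two filtrations stage by stage then yields $B^2C(f)\simeq X$. (An alternative, purely categorical route is to compare $BC(f)$ with a subdivision of $F(X)$ by a functor whose comma categories are made contractible by the same collapses, and to invoke Quillen's Theorem A.)

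The step I expect to be the main obstacle is controlling the \emph{double} bar construction. Because $BC(f)$ is genuinely topological---its morphism spaces $B(C(f)(d,c))$ are positive-dimensional---homotopy invariance of the realization $|N(BC(f))|$ is not automatic, and the elementary collapses above must be promoted from statements about individual strata to statements about families over these morphism spaces. Concretely, I would need the simplicial space $N(BC(f))$ to be proper (Reedy cofibrant), equivalently that the face and degeneracy inclusions induced by the poset-enriched composition in $C(f)$ are cofibrations, so that the fiberwise homotopy equivalences survive geometric realization and assemble into the global equivalence. Establishing this cofibrancy, and checking that the inductively constructed attaching maps agree up to homotopy with those of $X$, is the technical heart on which the whole argument rests.
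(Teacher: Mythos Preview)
This theorem is not proved in the present paper; it is quoted as a result of \cite{1612.08429} and no argument is supplied here. There is therefore no proof in this paper against which to compare your proposal.

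For what it is worth, your sketch is a plausible outline but differs in strategy from the argument actually given in \cite{1612.08429}. That paper does not filter by critical values and match attaching maps stage by stage. Instead it builds a zigzag of $2$-functors between poset-enriched categories connecting $C(f)$ to the face poset $F(X)$ (viewed as a $2$-category with only identity $2$-morphisms), and shows that each functor in the zigzag induces a homotopy equivalence on double classifying spaces. The individual equivalences are obtained from a $2$-categorical form of Quillen's Theorem~A together with the fact that collapsing a matched pair amounts to formally inverting a single morphism, which is shown not to change the homotopy type of $B^2$. Your parenthetical ``alternative, purely categorical route'' via Theorem~A is therefore much closer in spirit to what is actually done than your primary filtration-and-attaching-map argument; the latter would require the delicate control of attaching maps and Reedy cofibrancy that you flag, whereas the categorical route sidesteps those issues entirely.
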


As a version of Morse theory, we would like to have a ``cell
decomposition'' of $B^2C(f)$ whose cells are in one-to-one
correspondence with critical cells of $f$.
Theorem \ref{theorem:stratification} and Proposition
\ref{stratification_on_BC(X)} tell us that the 
correct way of decomposing $B^2C(f)$ is a stellar stratification, not
a cell decomposition.

Theorem \ref{theorem:discrete_Morse} can be proved by using the unstable
stratification on $B^2C(f)$.

\begin{proof}[Proof of Theorem \ref{theorem:discrete_Morse}]
 Let $X$ be a finite CW complex. Given a discrete Morse function $f$ on
 $X$, the topological category $BC(f)$ is an acyclic top-enriched
 category. The finiteness of $X$ and a result of de Seguins Pazzis
 \cite{1005.2666} guarantee that the category $BC(f)$ satisfies the
 conditions of Theorem \ref{theorem:stratification}.
\end{proof}

\section{Concluding Remarks}
\label{remarks}

\begin{itemize}
 \item In \cite{1609.04500}, it is proved that if a CNSSS $X$ has a
       ``polyhedral structure'', $BC(X)$ is a strong deformation retract
       of $X$.
       It is very likely that the deformation retraction can be used to
       extend Corollary \ref{corollary:exit-path_for_stellar_complex} to CW
       polyhedral stellar stratified spaces.

 \item The original motivation of this paper was to study relations
       between $C(X)$ and $\Exit(X)$ for a stellar stratified space $X$.
       We anticipate that $C(X)$ and $\Exit(X)$ are equivalent as
       $\infty$-categories if $X$ is a cylindrically normal CW stellar
       complex. This problem will be studied in the sequel to this
       paper.
       
 \item Besides the assumptions of Theorem \ref{theorem:stratification},
       an extra finiteness condition is added in Theorem 
       \ref{theorem:conically_stratified}. This condition is introduced
       only for proving the map $n_{x}$ to be a quotient map in Lemma
       \ref{n_is_embedding}.
       Probably this condition is not necessary or can be replaced by a
       weaker condition.
\end{itemize}

\bibliographystyle{halpha}
\bibliography{%
preamble,%
mathA,%
preprintA,%
mathB,%
preprintB,%
mathC,%
preprintC,%
mathD,%
preprintD,%
mathE,%
preprintE,%
mathF,%
preprintF,%
mathG,%
preprintG,%
mathH,%
preprintH,%
mathI,%
preprintI,%
mathJ,%
preprintJ,%
mathK,%
preprintK,%
mathL,%
preprintL,%
mathM,%
preprintM,%
mathN,%
preprintN,%
mathO,%
preprintO,%
mathP,%
preprintP,%
mathQ,%
preprintQ,%
mathR,%
preprintR,%
mathS,%
preprintS,%
mathT,%
preprintT,%
mathU,%
preprintU,%
mathV,%
preprintV,%
mathW,%
preprintW,%
mathX,%
preprintX,%
mathY,%
preprintY,%
mathZ,%
preprintZ,%
personal}

\end{document}